\newcommand{\qed}{\hfill \ensuremath{\Box}}
\newenvironment{proof}{\vspace{1ex}\noindent{\it Proof.}\hspace{0.5em}}
	{\hfill\qed\vspace{1ex}}
\newtheorem{theorem}{Theorem}[section]
\newtheorem{lemma}[theorem]{Lemma}
\newtheorem{proposition}[theorem]{Proposition}
\newtheorem{corollary}[theorem]{Corollary}
\newtheorem{definition}[theorem]{Definition}
\newtheorem{question}[theorem]{Question}
\DeclareMathOperator{\Gal}{\operatorname{Gal}}
\DeclareMathOperator{\Q}{\mathbf{Q}}
\DeclareMathOperator{\Z}{\mathbf{Z}}
\DeclareMathOperator{\N}{\mathbf{N}}
\DeclareMathOperator{\Spec}{\operatorname{Spec}}
\DeclareMathOperator{\Hom}{\operatorname{Hom}}
\DeclareMathOperator{\Frac}{\operatorname{Frac}}
\DeclareMathOperator{\ord}{\operatorname{ord}}
\DeclareMathOperator{\Lie}{\mathrm{Lie}}
\DeclareMathOperator{\Ext}{\operatorname{Ext}}
\DeclareMathOperator{\Og}{\mathcal{O}}
\DeclareMathOperator{\Pic}{\mathrm{Pic}}
\DeclareMathOperator{\et}{\acute{\mathrm{e}}{\mathrm{t}}}
\DeclareMathOperator{\Gm}{\mathbf{G}_m}
\DeclareMathOperator{\Res}{\mathrm{Res}}
\DeclareMathOperator{\alg}{{^\mathrm{alg}}}
\DeclareMathOperator{\sep}{{^\mathrm{sep}}}
\DeclareMathOperator{\perf}{{^\mathrm{perf}}}
\title{Néron models of pseudo-Abelian varieties}
\author{Otto Overkamp}
\date{}
\begin{document}
\maketitle
{\abstract{We study Néron models of pseudo-Abelian varieties over excellent discrete valuation rings of equal characteristic $p>0$ and generalize the notions of \it good reduction \rm and \it semiabelian reduction \rm to such algebraic groups. We prove that the well-known representation-theoretic criteria for good and semiabelian reduction due to Néron-Ogg-Shafarevich and Grothendieck carry over to the pseudo-Abelian case, and give examples to show that our results are the best possible in most cases. Finally, we study the order of the group scheme of connected components of the Néron model in the pseudo-Abelian case. Our method is able to control the $\ell$-part (for $\ell\not=p$) of this order completely, and we study the $p$-part in a particular (but still reasonably general) situation.}}
\tableofcontents
\section{Introduction}
Let $k$ be a field and let $G$ be a smooth, connected, and commutative algebraic group over $k.$ If $k$ is perfect, then $G$ fits into an exact sequence
$$0\to H\to G\to A\to 0$$ over $k$ such that $H$ is a smooth connected commutative affine algebraic group and $A$ an Abelian variety over $k,$ respectively. This is a consequence of Chevalley's theorem (see \cite{Con} for a proof of this fact in the language of schemes, and for more background). However, it is well-known that Chevalley's theorem fails completely if $k$ is not perfect; we shall see many such examples in this paper. To deal with this phenomenon, Totaro \cite{T} recently introduced the notion of \it pseudo-Abelian varieties \rm and worked out much of their structure. A pseudo-Abelian variety over the field $k$ is a smooth connected commutative\footnote{This is not part of Totaro's original definition but a consequence of it; we shall recall those details later.} algebraic group which does not admit any smooth connected affine closed algebraic subgroups. This notion works very well in the context of Chevalley's theorem, even over imperfect fields. Indeed, every smooth connected (not necessarily commutative) algebraic group (over an arbitrary field) is an extension of a pseudo-Abelian variety by a linear algebraic group in a unique way. Over a perfect field, all pseudo-Abelian varieties are Abelian by Chevalley's theorem, but over a non-perfect field, pseudo-Abelian varieties which are not Abelian are ubiquitous (see \cite{T} for more details).

Now let $\Og_K$ be an excellent discrete valuation ring with field of fractions $K$ and residue field $\kappa.$ The field $K$ is perfect if and only if it is of characteristic 0, so if $p:=\mathrm{char}\, K>0$, then there are plenty of pseudo-Abelian varieties over $K$ which are not Abelian varieties. In this paper we shall study \it degenerations \rm of pseudo-Abelian varieties, that is, smooth separated models $\mathscr{P}'\to \Spec \Og_K$ of $P$ of finite type. Even if $P$ is an Abelian variety, it is not in general possible to predict the behaviour of a general model of this kind. However, it has been known for a long time that Abelian varieties over discretely valued fields admit \it Néron models, \rm which are smooth separated models of finite type that are characterized by a universal property. As it turns out, a classical criterion for the existence of Néron models implies immediately that pseudo-Abelian varieties over fields of fractions of \it excellent \rm discrete valuation rings admit Néron models as well. Since it is almost impossible to exaggerate the role played by Néron models in the study of Abelian varieties, it seems very natural to study Néron models of pseudo-Abelian varieties in more detail in order to understand the behaviour of those objects under degeneration. We shall seek to begin such a study in this paper. 

In the world of Abelian varieties, one can use Néron models to distinguish between \it good reduction, semiabelian reduction, and additive reduction. \rm We shall introduce analogous notions for pseudo-Abelian varieties; these are defined purely in terms of numerical invariants of algebraic groups (such as toric and Abelian ranks) and are straightforward generalizations of the corresponding notions in the case of Abelian varieties. For Abelian varieties, it is well-known that the the various reduction types are cohomological invariants, i.e., they only depend upon the first $\ell$-adic cohomology (or, equivalently, the $\ell$-adic Tate module) of the Abelian variety; here we have to choose a prime number $\ell$ invertible in $\Og_K.$

Our first main result is that this still holds for pseudo-Abelian varieties:

\begin{theorem} \rm (Theorems \ref{NOScritthhm} and \ref{unipotentthm}) \it Let $P$ be a pseudo-Abelian variety over $K.$ Then $P$ has good reduction (resp. pseudo-semiabelian reduction)\footnote{See Definitions \ref{goodreddef} and \ref{pseudosemabdef}.} if and only if the Galois representation on $T_\ell(P)$ is unramified (resp. all elements of an inertia subgroup act as unipotent operators).
\end{theorem}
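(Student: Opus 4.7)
The plan is to reduce both statements to the classical N\'eron--Ogg--Shafarevich and Grothendieck criteria for abelian varieties, by passing to an abelian variety quotient of $P$ that exists after a suitable purely inseparable base change.

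I would first base change to $K^{\mathrm{perf}}$ and apply Chevalley's theorem, obtaining $0 \to U \to P_{K^{\mathrm{perf}}} \to A \to 0$ with $U$ smooth connected affine and $A$ an abelian variety. The pseudo-Abelian hypothesis on $P$ forces $U$ to be unipotent: the maximal subtorus of $U$ is canonically defined, and tori descend uniquely along the purely inseparable extension $K^{\mathrm{perf}}/K$ (being classified by continuous Galois actions on their character lattices, an entirely separable datum), producing a nontrivial smooth connected affine subgroup of $P$ over $K$ --- a contradiction. By a standard limit argument, the whole decomposition descends to some finite purely inseparable extension $K'/K$, giving $0 \to U' \to P_{K'} \to A' \to 0$ with $U'$ smooth connected unipotent and $A'$ an abelian variety over $K'$. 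Since $\ell \neq p$, multiplication by $\ell$ is an automorphism of $U'$, hence $T_\ell(P_{K'}) \cong T_\ell(A')$; and since $\Gal(\bar K / K) = \Gal(\bar K / K')$ for the purely inseparable $K'/K$, this is a Galois-equivariant identification $T_\ell(P) \cong T_\ell(A')$.

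I would then apply the classical N\'eron--Ogg--Shafarevich and Grothendieck criteria to $A'$ over $K'$, concluding that the inertia action on $T_\ell(P)$ is trivial (resp. unipotent) if and only if $A'$ has good (resp. semiabelian) reduction over $\Og_{K'}$. To finish, I need to install a two-step dictionary: first, that good (resp. semiabelian) reduction of the abelian variety $A'$ is equivalent to good (resp. pseudo-semiabelian) reduction of the pseudo-Abelian variety $P_{K'}$ in the sense of Definitions \ref{goodreddef} and \ref{pseudosemabdef}; second, that the reduction type of $P_{K'}$ over $\Og_{K'}$ matches that of $P$ over $\Og_K$. The first equivalence is expected to follow from the fact that $U'$, being unipotent, contributes only to the unipotent rank of the N\'eron special fibre of $P_{K'}$, so that the abelian and toric ranks of $\mathscr{P}_{K', \bar\kappa}^0$ match those of the N\'eron model of $A'$.

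The principal obstacle is the second half of the dictionary. N\'eron models do not in general commute with non-\'etale base change, and proving stability of the abelian, toric, and unipotent ranks of the N\'eron special fibre under the finite purely inseparable extension $K'/K$ is the main technical hurdle; the excellence hypothesis on $\Og_K$ should play an essential role here. Once these invariants are known to be stable under inseparable base change, the two halves of the theorem follow formally from the classical criteria applied to $A'$.
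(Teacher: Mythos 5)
Your first steps match the paper: the descent of the Chevalley sequence to a finite purely inseparable extension, the unipotence of the affine part via the uniqueness of maximal tori, and the Galois-equivariant identification $T_\ell(P)\cong T_\ell(A')$ are exactly Lemma \ref{Psplitlem} and the discussion of Tate modules. But the step you defer as the "principal obstacle" is the actual content of Theorems \ref{NOScritthhm} and \ref{unipotentthm}, and you give no mechanism for it. Worse, the statement you propose to prove -- stability of the abelian, toric and unipotent ranks of the N\'eron special fibre under the inseparable base change $K'/K$, i.e.\ a comparison of $\mathscr{P}\otimes\kappa$ with the special fibre of a N\'eron model of $P_{K'}$ -- is problematic as formulated: $P_{K'}$ is no longer pseudo-Abelian (it contains the smooth connected unipotent subgroup $U'$), so Definitions \ref{goodreddef} and \ref{pseudosemabdef} do not apply to it verbatim and even the finite-typeness of its N\'eron (lft-)model is not immediate; moreover the paper shows that N\'eron models of pseudo-Abelian varieties interact badly with base change (already the Lie algebra of the N\'eron model of a Weil restriction changes under ramified base change), so "excellence should make the invariants stable" is not a proof sketch but precisely the open issue.

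The paper avoids this comparison altogether. For the pseudo-semiabelian criterion it works with the maximal Abelian subvariety $E\subseteq P$ defined over $K$ itself (Totaro), so that the N\'eron mapping property over $\Og_K$ gives isomorphisms $\mathscr{E}[\ell^n]\to\mathscr{P}[\ell^n]$, and Proposition \ref{alphainvariantprop} (a homomorphism inducing isomorphisms on $\ell$-torsion for infinitely many $\ell$ preserves $\alpha$ and $t$) together with Proposition \ref{Eprop} transfers the numerical invariants; only then is Grothendieck's classical criterion applied to $E$ over $\Og_K$, with no change of base ring. For good reduction the paper does not even invoke the classical N\'eron--Ogg--Shafarevich theorem: over the strict Henselization it runs a direct counting argument on $\mathscr{P}[\ell^n]$, whose key input is the semicontinuity estimate $t(\mathscr{P}_\kappa)\le\alpha(P)-\alpha(\mathscr{P}_\kappa)$ of Proposition \ref{deltajumpprop} (without such a bound, a trivial inertia action could a priori be compatible with a drop in $\alpha$ compensated by toric rank, so the "formal" deduction you envisage does not close). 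The link with $A$ over $\Og_L$ is made by Proposition \ref{torsisoprop}, which identifies the $\ell^n$-torsion of $\mathscr{P}_{\Og_L}$ (the base change of the N\'eron model of $P$, not a N\'eron model of $P_L$) with that of the N\'eron model of $A$, using topological invariance of the \'etale site along the universal homeomorphism $\Spec\Og_L\to\Spec\Og_K$ and the N\'eron mapping property. Without these ingredients, or equivalents, your outline establishes only the easy identifications of Galois representations and leaves both directions of the theorem unproved.
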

Although Néron models generally behave badly with respect to base change, the situation is somewhat better if the Abelian variety in question has good or semiabelian reduction; in the first case, the Néron model commutes with any faithfully flat base change of discrete valuation rings and in the second case, at least the identity component of the Néron model will commute with general faithfully flat base change (even though the Néron model itself will usually not have this property). This follows from the fact that a semi-Abelian model of an Abelian variety is uniquely determined up to unique isomorphism, and it implies that the property of having good (resp. semiabelian) reduction is not affected by passing to a finite (possibly ramified) extension of $K.$  We shall give examples to show that the uniqueness properties just mentioned do not hold for pseudo-Abelian varieties in general. However, the representation-theoretic criteria will allow us to deduce that the property of having good (resp. pseudo-semiabelian) reduction is not lost after passing to an arbitrary finite separable extension.
Another invariant attached to a pseudo-Abelian variety $P$ over $K$ by means of its Néron model $\mathscr{P}\to \Spec \Og_K$ is the \it order of the group scheme $\Phi:=\mathscr{P}_{\kappa}/\mathscr{P}^0_{\kappa},$ \rm which is usually known as the group scheme of connected components of $\mathscr{P}_{\kappa}.$ We shall see that, if $\ell$ is a prime number invertible on $\Og_K,$ the order of the $\ell$-part of $\Phi$ is controlled completely by the analogous invariant associated with the maximal Abelian subvariety of $P.$ These methods do not work if $\ell =p,$ and the $p$-part of $\Phi$ remains largely mysterious (see Question \ref{Phiquestion}). However, we are able to show that it vanishes for a particular class of pseudo-Abelian varieties with good reduction constructed in \cite{T}. \\
Throughout the paper, we shall use the following notation:
\begin{itemize}
\item[$\cdot$] $\Og_K$ is an excellent discrete valuation ring of equal characteristic $p>0.$
\item[$\cdot$]$K:=\Frac \Og_K.$
\item[$\cdot$]$\langle\pi\rangle=\mathfrak{m}\subseteq \Og_K$ is the maximal ideal of $\Og_K.$ 
\item[$\cdot$]$\kappa:=\Og_K/\mathfrak{m}$ is the residue field of $\Og_K$ (which is not assumed to be perfect unless stated otherwise).
\item[$\cdot$]For morphisms of schemes $X\to S$ and $S'\to S,$ we let $X_{S'}:=X\times_SS'.$ If $S=\Spec \Og_K$ and $S'$ is the spectrum of an $\Og_K$-algebra $B,$ we let $X_B:=X_{S'}.$
\item[$\cdot$]If $\Og_K$ is Henselian and $X\to \Spec \Og_K$ is a quasi-finite morphism, we let $X=X^{\mathrm{f}} \sqcup X^{\eta}$ be the decomposition of $X$ into an $\Og_K$-finite open subscheme $X^{\mathrm{f}}$ and an $\Og_K$-scheme $X^{\eta}$ with empty special fibre. This decomposition is functorial in $X$ (see Proposition \ref{decompprop}),
\item[$\cdot$] For a group scheme $G$ locally of finite presentation over a field $k$, we denote by $G^0$ the component of the unit section of $G.$ 
\item[$\cdot$] For a smooth group scheme $\mathscr{G}\to \Spec \Og_K$ with connected generic fibre, we let $\mathscr{G}^0$ be the complement (in $\mathscr{G}$) of the union of irreducible components of $\mathscr{G}_{\kappa}$ which do not contain the unit section. This is the unique open subgroup scheme of $\mathscr{G}$ over $\Og_K$ with connected special fibre.
\item[$\cdot$] For a field $k,$ we let $k\sep$ (resp. $k\alg$) denote a separable (resp. algebraic) closure of $k.$ Moreover, we let $k\perf$ denote the perfect closure of $k,$ i.e., the unique perfect purely inseparable algebraic extension of $k.$ 
\end{itemize}
\section{Some technical remarks}
\subsection{Weil restriction}
Let $S'\to S$ be a finite and locally free morphism of schemes. Let $X$ be a scheme over $S'.$ Then the functor $\Res_{S'/S}X$ is defined to be the pushforward of the functor of points of $X$ along $S'\to S.$ This is clearly still a sheaf in the fppf-topology (and therefore, \it a forteriori, \rm in the étale and Zariski topologies as well). However, in order to ensure that this functor is representable, one must generally impose a condition on the morphism $X\to S$ concerning common affine neighbourhoods of finite sets of points contained in the fibres of this map. The following proposition shows that this condition can be dropped if $S'\to S$ is a universal homeomorphism. This has already been observed in \cite{Bert}, Corollary A.5 (and it is also implicit in many other places in the literature). We give a more direct proof below. This technical point will turn out to be important in this paper, since it will allow us to perform Weil restrictions along finite locally free universal homeomorphisms without having to rely on Raynaud's deep results on the quasi-projectivity of group schemes. 

\begin{proposition}
Let $S'\to S$ be a morphism of schemes which is finite and locally free. Furthermore assume that $S'\to S$ be a universal homeomorphism (i.e., a morphism such that for all $S$-schemes $T,$ the map $T\times_SS'\to T$ is a homeomorphism). Let $X$ be a scheme over $S.$ Then $\Res_{S'/S}X$ is representable. \label{universalhomeoprop}
\end{proposition}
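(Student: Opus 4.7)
The plan is to reduce to the case where $X$ is affine and then glue. If $X \to S'$ is affine, then $\Res_{S'/S}X$ is representable by the well-known direct construction: locally on $S$, picking a basis of $\Og_{S'}$ as an $\Og_S$-module and expanding the defining equations of $X$ in that basis realises $\Res_{S'/S}X$ as a closed subscheme of an affine space over $S$. This part requires only that $S' \to S$ be finite and locally free, and imposes no further condition on $X$ or on the map $S' \to S$.

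For a general scheme $X$, choose an affine open cover $\{U_i\}_{i\in I}$ of $X$ and set $F_i := \Res_{S'/S}U_i$, which is representable by the previous paragraph; the same argument, applied to an affine open cover of $U_i \cap U_j$, shows that $\Res_{S'/S}(U_i \cap U_j)$ is likewise representable. To conclude via the standard gluing principle (a Zariski sheaf admitting a Zariski-open cover by representable subfunctors is itself representable), I would verify two things: that each inclusion $F_i \hookrightarrow F := \Res_{S'/S}X$ is representable by an open immersion, and that the $F_i$ jointly cover $F$ in the Zariski topology.

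Both statements reduce to the following assertion. Given a $T$-point of $F$, i.e., an $S'$-morphism $f : T\times_SS' \to X$, the subset $V_i \subseteq T$ consisting of those $t \in T$ whose fibre under the projection $q : T\times_SS' \to T$ lies entirely in $f^{-1}(U_i)$ is Zariski open in $T$, and the $V_i$ cover $T$. Here is where the universal homeomorphism hypothesis becomes essential: since $q$ is the base change of $S' \to S$, it is itself a finite universal homeomorphism, hence a closed map whose set-theoretic fibres are singletons. Therefore $T\setminus V_i = q\bigl((T\times_SS')\setminus f^{-1}(U_i)\bigr)$ is closed, and $t \in V_i$ if and only if the (unique) point of $q^{-1}(t)$ is sent into $U_i$ by $f$; the covering property is then immediate since each such unique point lies in some $U_i$. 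Without the universal homeomorphism assumption the fibres of $q$ could contain several points, and one would need to find an affine open of $X$ simultaneously containing all of them — this is precisely the issue at the heart of the representability hypotheses in the existing literature on Weil restriction.

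The remaining step, namely the actual gluing of the $F_i$ along the representable intersections $F_i \cap F_j = \Res_{S'/S}(U_i \cap U_j)$ to produce a scheme representing $F$, is then entirely formal. The substantive obstacle is conceptual rather than technical: once one notices that universal homeomorphicity collapses the multi-point fibre issue to the tautological single-point case, all the usual representability conditions become vacuous and the argument carries through without further hypotheses on $X$.
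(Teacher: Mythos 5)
Your proof is correct and follows essentially the same route as the paper: representability for affine $X$ (as in Bosch--L\"utkebohmert--Raynaud), then a Zariski-gluing argument in which the universal homeomorphism hypothesis is used exactly where the paper uses it, namely to descend $f^{-1}(U_i)\subseteq T\times_SS'$ to an open subscheme of $T$, so that the subfunctors $\Res_{S'/S}U_i$ form an open cover of $\Res_{S'/S}X$. The only cosmetic difference is that you re-derive the open-immersion and covering statements by hand (via closedness and singleton fibres of $q$) where the paper cites BLR, Chapter 7.6, Proposition 2(i); your side remark on the representability of $\Res_{S'/S}(U_i\cap U_j)$ is not needed, since $F_i\cap F_j$ is automatically an open subscheme of $F_i$.
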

\begin{proof} We may assume without loss of generality that $S$ (and hence $S'$) is affine. Let $(X_i)_{i\in I}$ be an open affine cover of $X$ indexed by some set $I.$ Then we have, for each $i\in I,$ a canonical morphism of functors 
$$\Res_{S'/S} X_i\to \Res_{S'/S}X,$$ which is representable by open immersions (\cite{BLR}, Chapter 7.6, Proposition 2(i)). Furthermore, we know that the functors $\Res_{S'/S}X_i$ are representable (\cite{BLR}, Chapter 7.6, first part of the proof of Theorem 4). Hence all that remains to be shown is that the set of functors $\{\Res_{S'/S} X_i\colon i\in I\}$ covers $\Res_{S'/S} X.$ To see this, let $T\to S$ be a morphism of schemes, and let $\xi\in \Res_{S'/S}X(T).$ Then $\xi$ is a morphism $T\times_SS'\to X.$ Now let $i\in I.$ Because $S'\to S$ is a universal homeomorphism, we can find a unique open subscheme $T_i\subseteq T$ such that 
$$T_i\times_SS'=\xi^{-1}(X_i).$$ If we let $\xi_i:=\xi\mid_{T_i},$ we see that $\xi_i\in \Res_{S'/S}X_i(T_i).$ This concludes the proof.  
\end{proof}\\
The Proposition allows us to prove a strong representability result in the world of group schemes of finite type over fields: 
\begin{proposition}
Let $k$ be a field and let $A$ be a finite $k$-algebra. Let $G\to \Spec A$ be a (not necessarily smooth) group scheme of finite type. Then $\Res_{A/k}G$ is representable by a quasi-projective group scheme over $k.$ \label{finitealgprop}
\end{proposition}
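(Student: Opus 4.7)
The plan is to factor the structure morphism $\Spec A\to\Spec k$ as a finite locally free universal homeomorphism followed by a finite étale morphism, so that Proposition \ref{universalhomeoprop} handles the first step and \cite{BLR}, Chapter 7.6, Proposition~5 handles the second.

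First I would reduce to the case where $A$ is a local Artinian $k$-algebra. Since $A$ is a finite product $\prod_iA_i$ of local Artinian factors, $G$ decomposes accordingly as $\bigsqcup_i G_i$ with each $G_i\to\Spec A_i$ of finite type, and $\Res_{A/k}G=\prod_i\Res_{A_i/k}G_i$; as finite products of quasi-projective $k$-schemes are quasi-projective, this reduces to the local case.

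Now suppose $A$ is local Artinian with residue field $\ell$, and let $k'\subseteq\ell$ denote the maximal separable subextension of $\ell/k$. I would use Hensel's lemma (valid since $A$ is complete local) to lift a primitive element $\alpha$ of $k'=k(\alpha)$ along its separable minimal polynomial, producing a $k$-algebra embedding $k'\hookrightarrow A$ which reduces to the given inclusion $k'\hookrightarrow\ell$ modulo the maximal ideal. With respect to this embedding, $\Spec A\to\Spec k'$ is finite and flat (since $k'$ is a field) and universally injective (as $\ell/k'$ is purely inseparable), hence a finite locally free universal homeomorphism. By Proposition \ref{universalhomeoprop}, $\Res_{A/k'}G$ is then representable; it is evidently a group scheme of finite type over $k'$, hence quasi-projective over $k'$ by the classical theorem (SGA 3, Exp.~VI$_{\mathrm{B}}$, Cor.~7.12) that every group scheme of finite type over a field is quasi-projective. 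Finally, applying \cite{BLR}, Chapter 7.6, Proposition~5 to the finite locally free morphism $\Spec k'\to\Spec k$ and the quasi-projective $k'$-scheme $\Res_{A/k'}G$, together with the transitivity isomorphism $\Res_{A/k}G\cong\Res_{k'/k}(\Res_{A/k'}G)$, yields the desired representability by a quasi-projective $k$-group scheme.

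The main obstacle is arranging the factorization through a genuine intermediate field extension $k'/k$: Cohen's structure theorem supplies a coefficient field for $A,$ but such a coefficient field need not be compatible with the given $k$-algebra structure when $\ell/k$ is inseparable, so one cannot in general put the full residue field $\ell$ in between. Restricting attention to the maximal separable subextension $k'\subseteq\ell$ sidesteps this issue, since Hensel's lemma then produces a lift compatible with $k\to A$, allowing Proposition \ref{universalhomeoprop} to apply and thereby circumventing any appeal to Raynaud's quasi-projectivity results for group schemes over more general bases.
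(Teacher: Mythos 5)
Your proposal is correct, but it takes a genuinely different route from the paper's. The paper first invokes effectivity of Galois descent for quasi-projective schemes to replace $k$ by a finite Galois extension over which $A$ splits into local factors whose residue fields are purely inseparable over the base; then $\Spec A\to\Spec k$ is itself a finite locally free universal homeomorphism, Proposition \ref{universalhomeoprop} gives representability in one stroke, and quasi-projectivity comes from the theorem that group schemes of finite type over a field are quasi-projective (\cite{CGP}, Proposition A.3.5). You avoid descent entirely: after the (correct) reduction to a local Artinian factor, you use Hensel's lemma --- legitimate, since the maximal ideal of $A$ is nilpotent --- to embed the maximal separable subextension $k'$ of the residue field into $A$ compatibly with the $k$-structure, so that $\Spec A\to\Spec k'$ is a finite locally free universal homeomorphism handled by Proposition \ref{universalhomeoprop}, while the separable layer $\Spec k'\to\Spec k$ is handled by the representability theorem for Weil restriction of quasi-projective schemes together with transitivity of Weil restriction. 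Two small points of bookkeeping: the representability statement you need for the separable layer is \cite{BLR}, Chapter 7.6, Theorem 4 (a quasi-projective $k'$-scheme satisfies its affine-neighbourhood hypothesis), rather than Proposition 5; and the quasi-projectivity of the final object over $k$ is most safely obtained by citing the group-scheme-over-a-field theorem once more at the end, which also covers your intermediate use of it over $k'$ --- where one should add the (standard) remark that $\Res_{A/k'}G$ is of finite type, as follows from the affine case and the open covering produced in the proof of Proposition \ref{universalhomeoprop}. In terms of what each approach buys: the paper's descent argument keeps everything within the scope of Proposition \ref{universalhomeoprop}, whereas your factorization trades descent for a Hensel lifting plus the quasi-projective existence theorem already available in \cite{BLR} for the \'etale layer; both ultimately rest on quasi-projectivity of finite type group schemes over a field, so neither is circular, and yours is a sound, somewhat more hands-on alternative.
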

\begin{proof}
Because Galois descent is effective for quasi-projective schemes, we may assume without loss of generality that $A$ be local and remain so after base change to $k\sep.$ Indeed, over $k\sep,$ $A$ splits into finitely many finite local $k\sep$-algebras, and we can simply replace $k$ by a finite Galois extension over which this decomposition is defined. Now observe that, if $\mathfrak{m}\subseteq A$ denotes the unique maximal ideal, the extension $k\subseteq A/\mathfrak{m}$ is purely inseparable (since $\Spec (A/\mathfrak{m}\otimes_kk\sep)$ is a closed subscheme of the one-point scheme $\Spec A\otimes_kk\sep$).  This allows as to deduce that the map $\Spec A\to \Spec k$ is a universal homeomorphism, which can be seen by considering the morphisms $\Spec A/\mathfrak{m}\to \Spec A\to \Spec k,$ and observing that the first morphism is a universal homeomorphism (as $\mathfrak{m}$ is nilpotent) as well as the composition (because $A/\mathfrak{m}$ is a purely inseparable extension of $k$). Hence the claim follows from Proposition \ref{universalhomeoprop}, together with the fact that group schemes of finite type over fields are always quasi-projective (see \cite{CGP}, Proposition A.3.5). 
\end{proof}\\
$\mathbf{Remark}.$ A similar result has been proven in \cite{CGP} (see \it op. cit., \rm Proposition A.5.1), where the $k$-algebra $A$ is assumed to be reduced. The Proposition above shows that this hypothesis is redundant.

Finally, we recall recall that the small étale site of a scheme is topologically invariant, as made precise by the following result of Grothendieck:
\begin{proposition}
Let $f\colon S'\to S$ be a universal homeomorphism of schemes. Then the categories $S_{\et}$ and $S'_{\et}$ of schemes étale over $S$ (resp. $S'$) are identified via the functors $f^\ast=-\times_SS'$ and $f_\ast=\Res_{S'/S}-,$ which are mutually inverse equivalences of categories. Moreover, these equivalences are compatible with the étale topologies on $S_{\et}$ and $S'_{\et}.$ \label{topolequivprop}
\end{proposition}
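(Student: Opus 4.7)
The plan is to check that $f^\ast$ and $f_\ast := \Res_{S'/S}$ form a pair of mutually inverse functors between $S_{\et}$ and $S'_{\et}$, and separately verify that both preserve covers. The adjunction $(f^\ast, f_\ast)$ holds on all representable $S'$-schemes once $\Res_{S'/S}$ is defined, so the two substantive tasks are (i) checking preservation of étaleness on both sides and (ii) checking that the unit and counit of the adjunction are isomorphisms.

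For (i), preservation of étaleness by $f^\ast$ is immediate from stability of étaleness under base change. For the other direction, given $Y\to S'$ étale, Proposition \ref{universalhomeoprop} guarantees that $f_\ast Y$ is representable as a scheme over $S$. To conclude $f_\ast Y\to S$ is étale, I would apply the infinitesimal lifting criterion: any square-zero thickening $T_0\hookrightarrow T$ of affine $S$-schemes pulls back, via the universal homeomorphism $f$, to a square-zero thickening $T_0\times_SS'\hookrightarrow T\times_SS'$ of $S'$-schemes, and the adjunction translates lifts $T\to f_\ast Y$ into lifts $T\times_SS'\to Y$, whose existence and uniqueness is the formal étaleness of $Y\to S'$. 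Local finite presentation is Zariski-local on $S$ and $S'$ and reduces to the standard étale local model of $Y\to S'$, together with the elementary fact that Weil restriction along a finite locally free morphism commutes with open immersions, products, and polynomial coordinate algebras.

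For (ii), I would check the unit $\eta\colon X\to f_\ast f^\ast X$ and counit $\varepsilon\colon f^\ast f_\ast Y\to Y$ on geometric fibres. Fix a geometric point $\bar{s}\colon \Spec k\to S$; because $f$ is a universal homeomorphism, $S'\times_S\Spec k=\Spec A$ with $A$ an Artin local $k$-algebra whose residue field is purely inseparable over $k$, hence equal to $k$ since $k$ is algebraically closed. So $S'\times_S\Spec k$ has a unique geometric point $\bar{s}'$ lying over $\bar{s}$. From the very definition of the Weil restriction, the geometric fibre $(f_\ast Y)_{\bar{s}}$ is $Y(\Spec A)=Y_{\bar{s}'}(k)$, and similarly for $f_\ast f^\ast X$; a direct computation shows $\eta$ and $\varepsilon$ are bijective on geometric points. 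Since $\eta$ and $\varepsilon$ are morphisms between étale $S$-schemes (respectively étale $S'$-schemes), being a bijection on geometric points forces them to be isomorphisms.

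Finally, compatibility with the étale topologies reduces to the preservation of surjectivity on points by base change along $f$ and by $f_\ast$. In the first case this is trivial; in the second, since $f$ is a universal homeomorphism, a jointly surjective family $\{Y_j\to Y\}$ in $S'_{\et}$ yields a jointly surjective family $\{f_\ast Y_j\to f_\ast Y\}$ in $S_{\et}$ because the surjectivity can be tested on geometric points via the fibre identifications above. The main technical obstacle is step (i), specifically verifying étaleness of $f_\ast Y$ from scratch; the trick is that the universal homeomorphism hypothesis converts nilpotent thickenings upstairs into nilpotent thickenings downstairs and vice versa, so that formal étaleness transfers cleanly through the adjunction.
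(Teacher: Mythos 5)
There is a genuine gap, and it sits at the very first step of your part (i). You invoke Proposition \ref{universalhomeoprop} to represent $f_\ast Y=\Res_{S'/S}Y$ by a scheme, but that proposition (and the underlying representability theory of Weil restrictions from \cite{BLR}, Chapter 7.6) requires $S'\to S$ to be \emph{finite and locally free} in addition to being a universal homeomorphism. A general universal homeomorphism is integral, universally injective and surjective, but typically neither flat nor even of finite type: think of $S_{\mathrm{red}}\to S$ for a non-reduced $S$, the normalization of a cuspidal curve (finite but not flat), or a perfection map (not finite). For such $f$ the representability of $\Res_{S'/S}Y$ is exactly what is \emph{not} available, so the infinitesimal-lifting verification of \'etaleness and the unit/counit comparison that follow have no object to apply to; indeed you betray the confusion when you appeal to ``the elementary fact that Weil restriction along a finite locally free morphism commutes with \dots'' although finiteness and local freeness are nowhere among the hypotheses. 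The paper does not prove this statement at all --- it cites \cite{SGA4 VIII}, Th\'eor\`eme 1.1 --- and the known proofs (there, or in the Stacks Project) do not proceed by representing $f_\ast$: they establish full faithfulness of $f^\ast$ and essential surjectivity, treating nilpotent thickenings by deformation theory and reducing the general universal homeomorphism to that case by factorization and limit arguments. A correct blind proof would have to reproduce something of that shape.

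Two smaller points in part (ii) also need repair even where representability is granted. The fibre of $f$ over a geometric point $\bar s$ is $\Spec A$ with $A$ local having a unique prime (so a nil maximal ideal) and residue field $k$, but $A$ need not be Artinian since $f$ need not be finite; and the identification $Y(\Spec A)=Y_{\bar s'}(k)$ is itself an instance of invariance of \'etale maps along a nil (not necessarily nilpotent) thickening, so it requires a limit argument using local finite presentation rather than formal \'etaleness alone. Your closing step --- that a morphism of \'etale $S$-schemes which is bijective on geometric points over every geometric point of $S$ is an isomorphism (\'etale plus radicial gives an open immersion, plus surjective gives an isomorphism) --- is fine, but it only helps once the objects and the fibre computations are in place.
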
  
\begin{proof}
See \cite{SGA4 VIII}, Théorème 1.1.
\end{proof} 
\subsection{Excellent discrete valuation rings}
As mentioned in the introduction, we shall always suppose that the base ring $\Og_K$ over which we work be an excellent discrete valuation ring of equal characteristic $p>0.$ This restriction is crucial because otherwise we would not be able to guarantee that pseudo-Abelian varieties over $K$ admit Néron models over $\Og_K.$ Recall that a Noetherian ring $R$ ring is said to be \it excellent \rm if it satisfies the following three conditions: \\
\\
(i) For each prime ideal $\mathfrak{p}\subseteq R,$ the map $\Spec \widehat{R}_{\mathfrak{p}}\to \Spec R_{\mathfrak{p}}$ is flat and has geometrically regular fibres, where $ \widehat{R}_{\mathfrak{p}}$ denotes the $\mathfrak{p}$-adic completion of $R_{\mathfrak{p}}$,\\
(ii) Each integral scheme $T$ finite over $R$ contains a regular dense open subset, and \\
(iii) $R$ is universally catenary. \\
\\
In fact, there are various equivalent definitions; see \cite{Raynaud} for more details. The following Lemma is well-known, but its proof is spread across many sources, so we recall a proof for the reader's convenience: 
\begin{lemma}
Let $R$ be a discrete valuation ring with field of fractions $K.$ Let $\widehat{R}$ be the completion of $R$ and let $\widehat{K}:=\Frac\widehat{R}.$ Then $R$ is excellent of and only if the $K$-algebra $\widehat{K}$ is geometrically reduced, i.e., if and only if the extension $K\subseteq \widehat{K}$ is separable. 
\end{lemma}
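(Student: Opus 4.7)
The strategy is to check each of the three conditions (i)--(iii) in the definition of excellence separately for the DVR $R$. Condition (iii), universal catenarity, is automatic: a DVR is regular, hence Cohen--Macaulay, and any Cohen--Macaulay Noetherian ring is universally catenary (by Ratliff's theorem).

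For condition (i), the map $\Spec\widehat R\to\Spec R$ is faithfully flat (since completion of a Noetherian local ring is always flat), and $\Spec R$ has only two points. The formal fibre over the closed point is $\Spec(\widehat R/\mathfrak m\widehat R)=\Spec\kappa$, trivially geometrically regular. The formal fibre over the generic point is $\Spec(\widehat R\otimes_R K)=\Spec\widehat K$; since $\widehat K$ is a field it is automatically regular, so geometric regularity over $K$ is equivalent to $\widehat K$ being a geometrically reduced $K$-algebra, i.e.\ to the extension $\widehat K/K$ being separable in the sense of Bourbaki. Hence condition (i) coincides with the condition of the lemma.

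The main work is showing that condition (ii) is also equivalent to separability of $\widehat K/K$. Given any integral scheme $T=\Spec R'$ finite over $R$, the ring $R'$ is a one-dimensional Noetherian domain whose fraction field $L:=\Frac R'$ is a finite extension of $K$; the regular locus of $T$ is dense precisely when the normalization of $R'$ inside $L$ is a finite $R'$-module, so (ii) amounts to the Nagata ($N$-2) property of $R$. I would then invoke the classical theorem of Nagata--Grothendieck (see, e.g., Matsumura, \emph{Commutative Ring Theory}, \S 32, or EGA IV$_2$, 7.8.3), which asserts that a one-dimensional Noetherian local domain is $N$-2 if and only if its generic formal fibre is geometrically reduced. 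In our setting the generic formal fibre is exactly $\Spec\widehat K$, which yields the desired equivalence.

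The principal obstacle is this last equivalence of (ii) with separability of $\widehat K/K$, which rests on the non-trivial classical Nagata--Grothendieck theorem; conditions (i) and (iii) are routine verifications, and once (ii) is handled the lemma follows by combining the three steps.
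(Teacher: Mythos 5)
Your handling of conditions (i) and (iii) is exactly the paper's: the closed formal fibre is $\Spec \kappa$ and the generic formal fibre is $\Spec \widehat{K}$, so (i) is precisely separability of $\widehat{K}/K$, while universal catenarity is automatic for a regular ring. The divergence is in condition (ii), and there your key reduction is false. If $T=\Spec R'$ is integral and finite over $R$, then $R'$ is a semi-local domain of dimension at most one, so $\Spec R'$ consists of its generic point $\eta$ together with finitely many closed points; hence $\{\eta\}$ is open, dense, and regular, and condition (ii) as stated holds for \emph{every} discrete valuation ring, excellent or not. In particular the claim that ``the regular locus of $T$ is dense precisely when the normalization of $R'$ inside $L$ is a finite $R'$-module'' is wrong (the ``if'' direction holds, the ``only if'' fails), and (ii) is \emph{not} equivalent to the Nagata property N-2: the non-excellent discrete valuation rings cited in the paper (Raynaud, Proposition 11.6) satisfy (ii) trivially yet are not N-2. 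This triviality is exactly how the paper disposes of (ii) (it reduces to the case of a base field, resp.\ to the openness of the generic point), so that all the content of the lemma sits in condition (i).

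Your argument does still reach the conclusion, because the only implication you actually use in the problematic step is separability $\Rightarrow$ (ii): separability gives N-2 by the Nagata--Grothendieck theorem you quote, and finiteness of the normalization makes the non-normal locus closed and proper, so the regular locus contains a dense open set; necessity of separability already follows from (i) alone, as you note. So the proof limps home, but the asserted biconditional for (ii) is a genuine error, and the appeal to the Nagata--Grothendieck theorem is unnecessary machinery: the correct observation is simply that (ii) is vacuous over a one-dimensional semi-local base.
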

\begin{proof}
Since the map $R\to \widehat{R}$ induces an isomorphism on residue fields, our assumption ensures that condition (i) is satisfied, and it is obvious that our condition is necessary. Hence it suffices to show that conditions (ii) and (iii) are satisfied for all discrete valuation rings. Condition (ii) is satisfied because all regular local rings are universally catenary (see \cite{PS}, Exemple 1.1.3). For condition (iii) it suffices to show that for all finite maps $T\to \Spec R$ with $T$ integral, the set of regular points of $T$ contains a non-empty open subset. This can be reduced to the case where the base ring is a field, in which case it is clear. 
\end{proof}\\
Excellent discrete valuation rings have the following important property:
\begin{proposition}
Let $R$ be an excellent discrete valuation ring and let $L$ be a finite (not necessarily separable) extension of the field of fractions $K$ of $R.$ Then the integral closure of $R$ in $L$ is finitely generated as an $R$-module. In other words, excellent discrete valuation rings are Japanese. \label{japaneseprop}
\end{proposition}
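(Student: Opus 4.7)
The plan is to reduce the statement to the classical fact that complete discrete valuation rings are Japanese, using the preceding lemma (which extracts from the excellence of $R$ exactly what is needed for the reduction, namely that $\widehat{K}/K$ is a separable field extension).

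First I would set up notation: let $S$ denote the integral closure of $R$ in $L$, and put $\widehat{L}:=L\otimes_K\widehat{K}$. Separability of $\widehat{K}/K$ forces $\widehat{L}$ to be reduced, so $\widehat{L}$ splits as a finite product $\widehat{L}_1\times\cdots\times\widehat{L}_m$ of finite field extensions of $\widehat{K}$. For each $i$, write $\widehat{S}_i$ for the integral closure of $\widehat{R}$ in $\widehat{L}_i$; because $\widehat{R}$ is a complete discrete valuation ring, each $\widehat{S}_i$ is a finite $\widehat{R}$-module, and hence the integral closure $\prod_i\widehat{S}_i$ of $\widehat{R}$ in $\widehat{L}$ is finite over $\widehat{R}$.

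Next, flatness of $R\to\widehat{R}$ realises $S\otimes_R\widehat{R}$ as an $\widehat{R}$-subalgebra of $L\otimes_K\widehat{K}=\widehat{L}$, and every element of $S\otimes_R\widehat{R}$ is integral over $\widehat{R}$ because $S$ is integral over $R$. Consequently $S\otimes_R\widehat{R}\subseteq\prod_i\widehat{S}_i$, and since $\widehat{R}$ is Noetherian, $S\otimes_R\widehat{R}$ is a finite $\widehat{R}$-module.

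Finally I would descend along the faithfully flat map $R\to\widehat{R}$: by picking finitely many elements $s_1,\ldots,s_n\in S$ whose images $s_j\otimes 1$ generate $S\otimes_R\widehat{R}$ over $\widehat{R}$ (possible because any $\widehat{R}$-generating set involves only finitely many elements of $S$), the $R$-submodule $S_0\subseteq S$ they span satisfies $S_0\otimes_R\widehat{R}=S\otimes_R\widehat{R}$, so that $(S/S_0)\otimes_R\widehat{R}=0$ and faithful flatness forces $S=S_0$. The only genuinely non-formal input is the Japanese property for complete discrete valuation rings; everything else is arranged to make this input applicable, with excellence — via the preceding lemma — guaranteeing that the base change $L\otimes_K\widehat{K}$ is reduced.
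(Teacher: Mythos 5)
Your proof is correct and follows essentially the same route as the paper: use excellence (via the preceding lemma) to see that $L\otimes_K\widehat{K}$ is reduced, hence a product of finite extensions of $\widehat{K}$, invoke the Japanese property of the complete ring $\widehat{R}$, embed $S\otimes_R\widehat{R}$ into that finite integral closure, and descend finiteness along the faithfully flat map $R\to\widehat{R}$. The only difference is that you spell out the descent step explicitly, which the paper leaves to the fpqc formalism.
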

\begin{proof}
This is elementary if $R$ is complete (see, for example, the Remark in \cite{Neu} after the proof of Proposition 6.8). For general excellent $R$, let $R'$ be the integral closure of $R$ in $L.$ Then $R'\otimes_R\widehat{R}$ is canonically an $\widehat{R}$-submodule of the integral closure of $\widehat{R}$ in $L\otimes_K\widehat{K}.$ Because the extension $K\subseteq \widehat{K}$ is separable, the $\widehat{K}$-algebra $L\otimes_K\widehat{K}$ is reduced, and hence a finite product of finite field extensions of $\widehat{K}.$ This shows that $R'\otimes_R\widehat{R}$ is finite over $\widehat{R},$ and since $\Spec \widehat{R}\to \Spec R$ is an fpqc-cover, we find that $R'$ is finite over $R.$  
\end{proof}\\
Although non-excellent discrete valuation rings exist (see \cite{Raynaud}, Proposition 11.6), they are rather rare. For example, every discrete valuation ring which arises as the local ring of a normal scheme of finite type over a field at a point of codimension one is excellent (\cite{Raynaud}, Théorème 5.1). 

\section{Pseudo-Abelian varieties and virtual ranks}
Let $k$ be a field and let $G$ be a smooth connected commutative algebraic group over $K.$ If $k$ is perfect, then $G$ fits into an exact sequence $0\to H \to G\to A\to 0,$ where $H$ and $A$ are a smooth connected affine algebraic group and $A$ an Abelian variety, respectively. Furthermore, there exist a unipotent group $U$ and a torus $T$ such that $H=U\times_kT.$ If $k$ is not perfect, then neither of those two statements holds true in general. This motivates the following
\begin{definition}
Let $P$ be a smooth, connected and commutative algebraic group over a field $k.$ Then $P$ is \rm pseudo-Abelian \it if the maximal smooth connected affine closed subgroup of $P$ is trivial.  
\end{definition}
$\mathbf{Remark.}$ This definition is due to Totaro (\cite{T}, Definition 0.1). In fact, our definition is slightly different from that of \it loc. cit., \rm where $P$ is not assumed to be commutative. The two definitions are, however, equivalent by \cite{T}, Theorem 2.1.\\
\\
If $G$ is any smooth, connected commutative algebraic group over a field $k,$ then $G$ fits into an exact sequence 
$$0\to H\to G\to P\to 0$$ with $H$ affine and $P$ pseudo-Abelian in a unique way. Moreover, pseudo-Abelian varieties which are not Abelian exist over any non-perfect field (\cite{T}, p. 649). On the other hand, any pseudo-Abelian variety over a perfect field is Abelian by Chevalley's theorem. \\
The following numerical invariants will be of particular importance in this paper: 
\begin{definition}
Let $G$ be a smooth commutative group scheme over a field $k.$ Suppose first that $G$ be connected. By Chevalley's theorem (\cite{Con}, Theorem 1.1), there exists a unique exact sequence
$$0\to H\to G\times_k\Spec k\perf\to A\to 0$$ with $H$ affine and $A$ Abelian. Furthermore, we can write $H=U\times_{k\perf} T,$ where $U$ is a smooth connected unipotent group and $T$ a Torus over $k\perf,$ respectively (see \cite{SGA3}, Exposé XVII, Théorème 7.2.1 b)). \\
We define the \rm virtual Abelian rank \it $\alpha(G)$ of $G$ to be the dimension of $A,$ and the \rm virtual unipotent rank \it $y(G)$ to be the dimension of $U.$ Furthermore, we define the \rm toric rank \it $t(G)$ of $G$ to be the dimension of $T.$ \\
Finally, if $G$ is not necessarily connected, we let $\alpha(G):=\alpha(G^0)$ and similarly for $y(G),$ $t(G).$ 
\end{definition}
$\mathbf{Remark.}$ The toric rank of $G$ is not "virtual" because by Grothendieck's theorem on tori (\cite{SGA3}, Exposé XIV, Théorème 1.1), there exists a unique closed subtorus $T'$ of $G$ over $k$ such that $T=T'\times_k\Spec k\perf.$ One verifies easily that the invariants $\alpha(G),$ $y(G)$ and $t(G)$ are invariant under any extension of the base field.  \\
\\
If $P$ is a pseudo-Abelian variety over a field $k$ which is not Abelian, then $\alpha(P)$ and $y(P)$ are both strictly positive. Indeed, if $\alpha(P)=0$ then $P$ is affine, and if $y(P)=0$ then $P$ is Abelian. \\
\\
For smooth connected group schemes $G$ over a field $k,$ the invariants $\alpha(G)$ and $t(G)$ are encoded in a strong way by the $\ell$-torsion subschemes: 
\begin{proposition}
Let $G_1$ and $G_2$ be smooth connected commutative group schemes over a field $k.$ Suppose there exists a morphism of $k$-group schemes $f\colon G_1\to G_2$ such that, for infinitely many prime numbers $\ell$ invertible in $k,$ the induced morphism $f[\ell]\colon G_1[\ell]\to G_2[\ell]$ is an isomorphism. Then $\alpha(G_1)=\alpha(G_2)$ and $t(G_1)=t(G_2).$ \label{alphainvariantprop}
\end{proposition}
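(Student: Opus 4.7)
The plan is to reduce to the situation where the kernel and cokernel of $f$ are (essentially) unipotent, and then invoke the stability of $\alpha$ and $t$ under quotient by a smooth connected unipotent subgroup, under isogeny, and under inclusion of a subgroup whose cokernel is smooth connected unipotent. Since $\alpha$ and $t$ are insensitive to base change, I work over $k\perf$ throughout. Let $H := f(G_1) \subseteq G_2$ denote the scheme-theoretic image, a smooth connected commutative closed subgroup of $G_2$; set $N := \ker f$ and $Q := G_2/H$, which is smooth, connected, and commutative.

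The main computation is that, for any smooth connected commutative algebraic group $G$ over $k$ and any prime $\ell$ invertible in $k$, the group scheme $G[\ell]$ is finite \'etale of order $\ell^{2\alpha(G)+t(G)}$. Indeed, applying the snake lemma to the Chevalley sequence $0 \to T_G \times U_G \to G \to A_G \to 0$ (over $k\perf$) and multiplication by $\ell$, which is \'etale and surjective on each smooth connected commutative group since $\ell \neq p$, one obtains an SES $0 \to T_G[\ell] \to G[\ell] \to A_G[\ell] \to 0$ (using $U_G[\ell] = 0$, as smooth connected unipotent groups in characteristic $p$ have only $p$-power torsion); the outer terms have orders $\ell^{t(G)}$ and $\ell^{2\alpha(G)}$. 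Hence if $G[\ell] = 0$ for infinitely many such $\ell$, then $G$ is unipotent.

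Under the hypothesis, $f[\ell]$ factors through $H[\ell] \hookrightarrow G_2[\ell]$, so $H[\ell] = G_2[\ell]$ inside $G_2$; the snake lemma applied to $0 \to H \to G_2 \to Q \to 0$ then yields $Q[\ell] = 0$ for infinitely many $\ell$, so $Q$ is unipotent. For the kernel, $N[\ell] = \ker f[\ell] = 0$; since $N$ need not be smooth (e.g. $f$ could be Frobenius-like), I pass to $\tilde N := (N^0)_{\mathrm{red}}$, which over $k\perf$ is a smooth connected commutative closed subgroup scheme of $G_1$ with $\tilde N[\ell] \subseteq N[\ell] = 0$. Thus $\tilde N$ is unipotent, and $\dim \tilde N = \dim N$ makes $N/\tilde N$ a finite group scheme.

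To conclude, chain the following equalities (and the analogous ones for $t$): first, $\alpha(G_1) = \alpha(G_1/\tilde N)$, since quotient by the smooth connected unipotent $\tilde N$ preserves $\alpha$ by direct Chevalley analysis (the maximal affine subgroup of $G_1/\tilde N$ is the image of that of $G_1$, with unipotent rank dropping by $\dim \tilde N$); next, the induced map $\bar f\colon G_1/\tilde N \to H$ is surjective with finite kernel $N/\tilde N$, hence an isogeny, and a diagram chase with Chevalley shows that isogenies of smooth connected commutative groups preserve $\alpha$ and $t$; finally, $\alpha(H) = \alpha(G_2)$ and $t(H) = t(G_2)$ because the composition $H \hookrightarrow G_2 \twoheadrightarrow A_{G_2}$ is surjective (its cokernel being simultaneously a quotient of the abelian variety $A_{G_2}$ and of the unipotent $Q$, hence trivial), and $H$ contains the maximal torus $T_{G_2}$ (any morphism from $T_{G_2}$ to the unipotent $Q$ being trivial). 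The main obstacle is the potential non-smoothness of $N$, circumvented by the passage to $\tilde N$ over $k\perf$ and absorption of the residual non-reduced and disconnected parts into the finite quotient $N/\tilde N$, whose presence is then handled by the isogeny-invariance of $\alpha$ and $t$.
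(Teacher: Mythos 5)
Your argument is correct, but it is organized quite differently from the paper's. The paper keeps both Chevalley sequences in play and studies the induced maps $f_{\mathrm{aff}}$ and $f_{\mathrm{ab}}$ directly: it first reduces to the case where $f_{\mathrm{aff}}$ is a closed immersion by dividing out $\ker f_{\mathrm{aff}}$ (checking, for $\ell$ prime to the number of components of the kernel, that this does not change the $\ell$-torsion), and then shows via the snake lemma that $f_{\mathrm{aff}}[\ell]$ and $f_{\mathrm{ab}}[\ell]$ are isomorphisms for suitable $\ell$, the key point being that the connecting map $\ker(f_{\mathrm{ab}}[\ell])\to\mathrm{coker}(f_{\mathrm{aff}}[\ell])$ vanishes because it factors through a map from an abelian variety to an affine group; the equalities of ranks then follow by comparing orders of $T_j[\ell]$ and $A_j[\ell]$. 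You instead factor $f$ through its image $H$, use the order formula $\#G[\ell]=\ell^{2\alpha(G)+t(G)}$ to show that $G_2/H$ and $(N^0)_{\mathrm{red}}$ are unipotent, and then chain three invariance statements: invariance of $\alpha,t$ under quotient by a smooth connected unipotent subgroup, under isogeny, and under a subgroup inclusion with unipotent cokernel. This is more modular and arguably more conceptual, but it outsources work to the isogeny-invariance of $\alpha$ and $t$, which you only assert ("a diagram chase with Chevalley"); it is true and standard (e.g.\ the induced map $A_{G}\to A_{G'}$ on Chevalley quotients is surjective with kernel that is affine and proper, hence finite, and the image and preimage of maximal tori give $t(G)=t(G')$), but it needs those lines, whereas the paper's route derives everything from the single vanishing statement above. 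Likewise your last step, $\alpha(H)=\alpha(G_2)$, needs slightly more than surjectivity of $H\to A_{G_2}$ (surjectivity alone gives only an inequality); the quickest completion with the tools you already set up is to note $H[\ell]=G_2[\ell]$ gives $2\alpha(H)+t(H)=2\alpha(G_2)+t(G_2)$ and combine with $t(H)=t(G_2)$, where for the latter you should also note that the maximal torus of $H$ sits inside $T_{G_2}$. With those routine supplements your proof is complete, works over $k\perf$ as you intend (perfectness is exactly what makes $(N^0)_{\mathrm{red}}$ a smooth subgroup), and establishes the same statement.
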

\begin{proof}
Since the invariants in question do not change when we replace $k$ by a bigger field, we may assume without loss of generality that $k$ be algebraically closed. Then there exists a commutative diagram
$$\begin{CD}
0@>>>T_1\times_kU_1@>>> G_1@>>> A_1@>>> 0\\
&&@V{f_{\mathrm{aff}}}VV@V{f}VV@VV{f}_{\mathrm{ab}}V\\
0@>>>T_2\times_kU_2@>>> G_2@>>> A_2@>>> 0
\end{CD}$$
with exact rows, where the $T_j$ and $U_j$ are tori and smooth commutative connected unipotent groups over $k,$ respectively, and the $A_j$ Abelian varieties (see \cite{Con}, Theorem 1.1 and \cite{SGA3}, Exposé XVII, Théorème 7.2.1 b)). \\
First observe that we may also assume without loss of generality that $f_{\mathrm{aff}}$ be a closed immersion. Indeed, if the scheme-theoretic intersection $\ker f_{\mathrm{aff}}\cap T_1$ were not finite, then $\ker f_{\mathrm{aff}}$ would contain non-trivial $\ell$-torsion points for all prime numbers $\ell\in k^\times,$ contradicting our assumption on $f.$ Hence, replacing $G_1$ by $G_1/\ker f_{\mathrm{aff}}$ does not alter the toric rank, and it clearly does not alter the virtual Abelian rank either.
We must show that the induced morphism 
$$G_1[\ell]\to (G_1/\ker f_{\mathrm{aff}})[\ell]$$ is an isomorphism for infinitely many prime numbers $\ell\in k^\times.$ We claim that this is the case for all $\ell$ invertible in $k$ for which $f[\ell]$ is an isomorphism and which do not divide the number of irreducible components of $\ker f_{\mathrm{aff}}.$ Indeed, by the snake lemma, it suffices to show that the map 
$$[\ell]\colon \ker f_{\mathrm{aff}} \to \ker f_{\mathrm{aff}}$$ is fppf-surjective for all such $\ell.$ Since $k$ is perfect, $(\ker f_{\mathrm{aff}})_{\mathrm{red}}^0$ is a group scheme over $k,$ which is smooth by construction. Moreover, the quotient $(\ker f_{\mathrm{aff}})^0/(\ker f_{\mathrm{aff}})_{\mathrm{red}}^0$ is an infinitesimal finite group scheme over $k,$ so multiplication by $\ell$ is an isomorphism on that group scheme for all such $\ell.$ Moreover, multiplication by $\ell$ is étale on $(\ker f_{\mathrm{aff}})_{\mathrm{red}}^0,$ so $[\ell]$ is surjective on $(\ker f_{\mathrm{aff}})_\mathrm{red}^0.$ The snake lemma now shows that $[\ell]$ is an isomorphism on $(\ker f_{\mathrm{aff}})^0.$ Finally, a completely analogous argument applied to the sequence
$$0\to (\ker f_{\mathrm{aff}})^0\to (\ker f_{\mathrm{aff}})\to Q\to 0$$ (where $Q$ is defined to be the quotient) proves the claim, noting that $\ell$ does not divide the order of $Q$ by assumption. In particular, we find that the morphism 
$$(G_1/(\ker f_{\mathrm{aff}}))[\ell]\to G_2[\ell]$$ is an isomorphism for infinitely many $\ell\in k^\times.$\\
Since $k$ is perfect, $(\ker f_{\mathrm{ab}})_{\mathrm{red}}$ is a group scheme over $k,$ and it is clearly of finite type. Let $\ell$ be a prime number invertible in $k$ which does not divide the number of irreducible components of this group scheme, and such that $f[\ell]$ is an isomorphism. Then we obtain a commutative diagram
$$\begin{CD}
0@>>>T_1[\ell]@>>> G_1[\ell]@>>> A_1[\ell]@>>> 0\\
&&@V{f_{\mathrm{aff}}[\ell]}VV@V{f[\ell]}VV@VV{{f}_{\mathrm{ab}}[\ell]}V\\
0@>>>T_2[\ell]@>>> G_2[\ell]@>>> A_2[\ell]@>>> 0.
\end{CD}$$
We shall prove that $f_{\mathrm{ab}}[\ell]$ and $f_{\mathrm{aff}}[\ell]$ are both isomorphisms; this will clearly imply our claim. Using the snake lemma, all we have to show is that the morphism $\ker(f_{\mathrm{ab}}[\ell])\to \mathrm{coker}(f_{\mathrm{aff}}[\ell])$ vanishes. Observe that
$$\ker(f_{\mathrm{ab}}[\ell]) \subseteq (\ker f_{\mathrm{ab}})_{\mathrm{red}}[\ell]=(\ker f_{\mathrm{ab}})_{\mathrm{red}}^0[\ell].$$ Using the naturality of the snake morphism, we obtain a commutative diagram
$$\begin{CD}
\ker(f_{\mathrm{ab}}[\ell])@>>>\mathrm{coker}(f_{\mathrm{aff}}[\ell])\\
@VVV@VVV\\
(\ker f_{\mathrm{ab}})_{\mathrm{red}}^0@>>>\mathrm{coker} f_{\mathrm{aff}}.
\end{CD}$$
Since $f_{\mathrm{aff}}$ is a closed immersion, we have $\mathrm{coker}(f_{\mathrm{aff}})[\ell]=\mathrm{coker}(f_{\mathrm{aff}}[\ell]),$ which implies that the right vertical arrow in the diagram is a closed immersion; the same holds for the left vertical arrow in any case. However, $(\ker f_{\mathrm{ab}})_{\mathrm{red}}^0$ is an Abelian variety and $\mathrm{coker}(f_{\mathrm{aff}})$ is affine, so the bottom arrow vanishes. Hence so does the top arrow and we conclude. 
\end{proof}\\
$\mathbf{Remark.}$ Clearly, the corresponding statement for $y(-)$ does not hold, as is shown by the map $\mathbf{G}_{\mathrm{a}}\to \mathbf{G}_{\mathrm{a}}^2;$ $x\mapsto (x,x).$\\
\\
The preceding Proposition will have several important applications in this paper. 
\begin{lemma}
Let $\Og_K$ be a discrete valuation ring with field of fractions $K.$ Let $\mathscr{U}\to \Spec \Og_K$ be a flat separated commutative group scheme such that $U:=\mathscr{U}_K$ is annihilated by a power of $p=\mathrm{char}\, K >0 $ (which is the case if, for example, $U$ is unipotent). Let $\ell$ be a prime number invertible in $\Og_K.$ Then the map $[\ell]\colon \mathscr{U}\to \mathscr{U}$ is an isomorphism. \label{lisomlem}
\end{lemma}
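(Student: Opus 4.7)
The plan is to reduce everything to the statement that $[p^n]\colon \mathscr{U}\to \mathscr{U}$ is the zero morphism for some $n\geq 1$, after which the result follows from elementary arithmetic since $\ell$ is a unit modulo $p$.

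First I would show that the generic fibre $\mathscr{U}_K$ is schematically dense in $\mathscr{U}$. Since $\mathscr{U}\to \Spec \Og_K$ is flat and $\Og_K$ is a discrete valuation ring, the uniformiser $\pi$ is a non-zero-divisor on $\mathscr{O}_\mathscr{U}$ locally, so the natural map $\mathscr{O}_\mathscr{U}\to j_\ast \mathscr{O}_{\mathscr{U}_K}$ (where $j\colon \mathscr{U}_K\hookrightarrow \mathscr{U}$ is the open immersion) is injective. Combined with the hypothesis that $\mathscr{U}$ is separated over $\Og_K$, this ensures that any two $\Og_K$-morphisms $\mathscr{U}\to \mathscr{U}$ agreeing on $\mathscr{U}_K$ agree globally (their equaliser is a closed subscheme containing a schematically dense open).

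Next, by assumption there exists $n\geq 1$ with $[p^n]|_{\mathscr{U}_K}=0$. Applying the density argument above to the two morphisms $[p^n]$ and $0\colon \mathscr{U}\to \mathscr{U}$ (both $\Og_K$-morphisms thanks to the separatedness of $\mathscr{U}$), we conclude that $[p^n]=0$ on all of $\mathscr{U}$. Finally, since $\ell$ is invertible in $\Og_K$ and hence coprime to $p$, choose $m\in \Z$ with $\ell m\equiv 1\pmod{p^n}$, say $\ell m = 1+kp^n$. Using the identities $[a]\circ [b]=[ab]$ and $[a+b]=[a]+[b]$ valid for any commutative group scheme, we get
\[
[\ell]\circ [m]=[\ell m]=[1]+k\cdot [p^n]=\mathrm{Id}_\mathscr{U},
\]
and symmetrically $[m]\circ [\ell]=\mathrm{Id}_\mathscr{U}$. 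Hence $[\ell]$ is an isomorphism with inverse $[m]$.

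The only real subtlety is the density/separatedness argument used to propagate the identity $[p^n]=0$ from the generic fibre to all of $\mathscr{U}$; once this is in place, the rest is formal. Note that the parenthetical remark in the statement (unipotent $U$ is annihilated by a power of $p$ in characteristic $p$) is standard for commutative smooth unipotent groups in positive characteristic and is not needed for the actual proof of the lemma as stated.
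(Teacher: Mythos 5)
Your proposal is correct and is essentially the paper's own argument: the paper also reduces to choosing $m$ with $m\ell\equiv 1\bmod p^N$, verifies $[m]\circ[\ell]=[\ell]\circ[m]=\mathrm{Id}$ on the generic fibre, and then propagates this to $\mathscr{U}$ using flatness and separatedness. Your explicit schematic-density argument (flatness makes $\mathscr{O}_{\mathscr{U}}\to j_\ast\mathscr{O}_{\mathscr{U}_K}$ injective, separatedness makes the equaliser closed) is exactly what the paper's brief appeal to ``flat and separated'' encodes, so the two proofs coincide in substance.
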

\begin{proof}
Choose $N\in \N$ such that $[p^N]=0$ on $U.$ Choose a natural number $m$ such that $m\ell\equiv 1 \mod p^N.$ Then $[m]\circ[\ell]=[\ell]\circ[m]=\mathrm{Id}_U$ on $U.$ Hence the same holds for $\mathscr{U}$ because $\mathscr{U}\to \Spec \Og_K$ flat and is separated.  
\end{proof}
\begin{proposition}
Let $\Og_K$ be an excellent discrete valuation ring with field of fractions $K$ and residue field $\kappa.$  Assume $\mathrm{char}\,K>0.$ Let $\mathscr{P}'\to \Spec \Og_K$ be a smooth separated group scheme whose generic fibre $P$ is a pseudo-Abelian variety over $K.$ Then \\
(i) $\alpha(\mathscr{P}'_\kappa)\leq \alpha(P)$ and \\
(ii) $t(\mathscr{P}'_\kappa)\leq \alpha(P)-\alpha(\mathscr{P}'_\kappa).$ \label{deltajumpprop}
\end{proposition}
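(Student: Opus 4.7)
The plan is to combine Totaro's structural decomposition of pseudo-Abelian varieties with a N\'eron-theoretic $\ell$-torsion comparison. Since $\mathscr{P}'\to\Spec\Og_K$ is smooth and therefore has equidimensional fibres, and since the Chevalley identity gives $\dim=\alpha+t+y$, the combination of $t(P)=0$ (any subtorus of $P_{K\perf}$ descends by Grothendieck's theorem on tori, contradicting pseudo-Abelianness of $P$) and the equality $\dim\mathscr{P}'^{0}_{\kappa}=\dim P=\alpha(P)+y(P)$ shows that (ii) is equivalent to the inequality $y(\mathscr{P}'_{\kappa})\geq y(P)$; statement (i) will then follow because $t(\mathscr{P}'_\kappa)\geq 0$. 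So the task reduces to producing an essentially unipotent subgroup of $\mathscr{P}'^{0}_{\kappa}$ of dimension at least $y(P)$.

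To produce one, I would invoke Totaro's work \cite{T} to obtain an exact sequence $0\to L\to P\to A\to 0$ over $K$ in which $A$ is an abelian variety of dimension $\alpha(P)$ and $L_{K\perf}$ is smooth connected unipotent of dimension $y(P)$. Let $\mathscr{A}\to\Spec\Og_K$ denote the N\'eron model of $A$; by the N\'eron mapping property, $P\to A$ extends to an $\Og_K$-homomorphism $\mathscr{P}'\to\mathscr{A}$, whose restriction to identity components on special fibres is a morphism of smooth connected commutative $\kappa$-groups $f\colon\mathscr{P}'^{0}_{\kappa}\to\mathscr{A}^{0}_{\kappa}$. After replacing $\Og_K$ by its strict Henselization (none of the relevant invariants change), the core step is to show that $f[\ell]$ is injective for every prime $\ell$ invertible in $\Og_K$. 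Since $[\ell]$ is \'etale on any smooth commutative $\Og_K$-group scheme when $\ell$ is a unit, both $\mathscr{P}'^{0}[\ell]$ and $\mathscr{A}^{0}[\ell]$ are quasi-finite \'etale over $\Og_K$, and by the functorial decomposition into finite-\'etale and empty-special-fibre parts, the special fibre of each scheme coincides with the special fibre of its finite part. A snake-lemma argument on $0\to L\to P\to A\to 0$ over $K\perf$, using that $[\ell]$ is an automorphism of the unipotent $L_{K\perf}$, gives an isomorphism $P[\ell]\cong A[\ell]$ of $K$-group schemes; hence the induced map $\mathscr{P}'^{0}[\ell]^{\mathrm{f}}\to\mathscr{A}^{0}[\ell]^{\mathrm{f}}$ of finite \'etale $\Og_K$-group schemes is injective on generic fibre (being the restriction of that isomorphism to a subgroup), therefore injective, and passing to special fibres yields the desired injectivity of $f[\ell]$.

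Given the injectivity of $f[\ell]$ for all such $\ell$, the group $\ker f$ carries no non-trivial $\ell$-torsion for infinitely many $\ell$. Passing to $\kappa\perf$ and applying Chevalley's theorem to $(\ker f)^{0}_{\mathrm{red}}$, the absence of $\ell$-torsion forces this reduced identity component to have neither abelian quotient nor non-trivial subtorus, hence to be smooth connected unipotent; it therefore lies inside the unipotent part of the Chevalley decomposition of $\mathscr{P}'^{0}_{\kappa}\times_{\kappa}\kappa\perf$, yielding $\dim\ker f\leq y(\mathscr{P}'_{\kappa})$. On the other hand $\dim\im f\leq\dim\mathscr{A}^{0}_{\kappa}=\dim A=\alpha(P)$, and by equidimensionality $\dim\mathscr{P}'^{0}_{\kappa}=\alpha(P)+y(P)$, so $\dim\ker f=\dim\mathscr{P}'^{0}_{\kappa}-\dim\im f\geq y(P)$. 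Combining, $y(\mathscr{P}'_{\kappa})\geq y(P)$, which proves (ii) and \emph{a fortiori} (i). The main obstacle is the structural input: one needs Totaro's abelian quotient $A$ of the correct dimension with unipotent kernel to make the dimension count close, and one must carefully verify that the finite-\'etale part of $\mathscr{P}'^{0}[\ell]$ really exhausts its special fibre and maps injectively into that of $\mathscr{A}^{0}[\ell]$.
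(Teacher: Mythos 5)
The dimension-count skeleton of your argument is sound (the reduction of (ii) to $y(\mathscr{P}'_{\kappa})\geq y(P)$, and (i) from (ii), are fine), but the structural input you feed into it is false as stated. You claim that \cite{T} provides an exact sequence $0\to L\to P\to A\to 0$ \emph{over $K$} with $A$ abelian of dimension $\alpha(P)$ and $L_{K\perf}$ smooth, connected and unipotent. Smoothness, connectedness and unipotence are all insensitive to the purely inseparable extension $K\subseteq K\perf$, so such an $L$ would already be a smooth connected unipotent (hence affine) closed $K$-subgroup of $P$ of dimension $y(P)$; by the very definition of a pseudo-Abelian variety this forces $L=0$, i.e.\ your sequence exists over $K$ only when $P$ is an abelian variety -- exactly the case where there is nothing to prove. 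What Totaro's Theorem 2.1 gives over $K$ is the sequence in the opposite order, $0\to E\to P\to V\to 0$ with $E$ the maximal abelian subvariety and $V$ unipotent, which does not help you, since you need a homomorphism \emph{out of} $P$ onto an abelian variety in order to invoke the N\'eron mapping property of $\mathscr{A}$. Nor can you simply drop the smoothness of the kernel: the existence over $K$ of any abelian quotient of $P$ of dimension $\alpha(P)$ is not something \cite{T} provides in general. The abelian-quotient/unipotent-kernel presentation is only available after a finite purely inseparable extension $L/K$ (Lemma \ref{Psplitlem}), and this is precisely where the subtlety of the proposition lies.

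The gap is repairable, and the repair is the opening move of the paper's own proof: replace $\Og_K$ by the integral closure $\Og_L$ of $\Og_K$ in such a purely inseparable $L$; by excellence and Lemma \ref{univlem} this is a discrete valuation ring, finite over $\Og_K$, with purely inseparable residue extension $\kappa\subseteq\kappa'$, so none of the invariants $\alpha$, $t$, $y$ of either fibre changes, and $\mathscr{P}'_{\Og_L}$ is still a smooth separated model of $P_L$. Taking $\mathscr{A}$ to be the N\'eron model of $A$ over $\Og_L$ (and passing to the strict Henselization, as you do), your remaining steps -- extending $P_L\to A$ by the N\'eron property, the snake-lemma identification $P_L[\ell]\cong A[\ell]$, the comparison of finite parts of the quasi-finite \'etale $\ell$-torsion schemes via Proposition \ref{decompprop}, and the bound $\dim\ker f\leq y(\mathscr{P}'_{\kappa'})$ -- do go through. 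After this fix your route genuinely differs from the paper's in its second half: the paper forms the fppf quotient of $\mathscr{P}'$ by the schematic closure of $U$, shows the quotient is a smooth model with abelian generic fibre and unchanged $\ell$-torsion, and concludes via Proposition \ref{alphainvariantprop}, whereas you map to the N\'eron model of $A$ and count dimensions of kernel and image; both are viable, but only once the inseparable base change is in place.
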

\begin{proof}
Because $\alpha(-)$ and $t(-)$ are invariant under extensions of the base field, we may assume without loss of generality that $\Og_K$ be Henselian. Let $L$ be a finite, purely inseparable extension of $K$ such that $\mathscr{P}'_\kappa$ is an extension of an Abelian variety by a smooth, connected unipotent group. Since $\Og_K$ is excellent (which remains unchanged after replacing $\Og_K$ with its Henselization by \cite{Raynaud}, Théorème 8.1(iv)), the integral closure of $\Og_L$ in $L$ is a discrete valuation ring. We may replace $\Og_K$ by this integral closure and assume that $U$ is already defined over $K.$ Furthermore, we may replace $\mathscr{P}'$ by its identity component $\mathscr{P}'^0$ and hence assume that $\mathscr{P}'\to \Spec \Og_K$ be of finite type. Let $\mathscr{U}$ be the Zariski closure of $U$ in $\mathscr{P}'.$ Then $\mathscr{U}$ is a flat and separated model of $U.$ Consider the exact sequence of fppf-sheaves
$$0\to \mathscr{U}\to \mathscr{P}'\to \mathscr{P}'/\mathscr{U}\to 0. \label{sequence1}$$ The fourth sheaf which appears in this sequence is representable by \cite{An}, Chapitre IV, Théorème 4.C. Since the morphism $\mathscr{U}\to \Spec \Og_K$ is fppf by construction, so is the morphism $\mathscr{P}'\to \mathscr{P}'/\mathscr{U}.$ This shows that the group scheme $\mathscr{P}'/\mathscr{U}$ is flat and of finite presentation over $\Og_K.$ The fibres of the quotient scheme are clearly smooth over the corresponding residue fields (since they are geometrically reduced), so the morphism $\mathscr{P}'/\mathscr{U}\to \Spec \Og_K$ is smooth. Now the snake lemma tells us, together with Lemma \ref{lisomlem}, that the maps
$\mathscr{P}'[\ell]\to \mathscr{P}'/\mathscr{U}[\ell]$ are isomorphisms for all prime numbers $\ell$ invertible in $\Og_K.$ Hence Proposition \ref{alphainvariantprop} implies that $\alpha(\mathscr{P}'_K)=\alpha((\mathscr{P}'/\mathscr{U})_K)$ and $\alpha(\mathscr{P}'_\kappa)=\alpha((\mathscr{P}'/\mathscr{U})_\kappa),$ which reduces our claim to the case where the generic fibre of $\mathscr{P}'$ is an Abelian variety. However, in this case, the assertion is clear for dimension reasons. This proves (i); assertion (ii) follows from an entirely analogous argument. 
\end{proof}\\
This Proposition shows that the virtual Abelian rank is relatively well-behaved in the situation which interests us. Note that the toric rank can jump both up and down, even when the base scheme is the spectrum of a compete discrete valuation ring and both fibres are semiabelian. 

\subsection{The Tate module}
Let $\ell$ be a prime number invertible in $k,$ and let $P$ be a pseudo-Abelian variety over $k.$ As usual, we define the \it Tate module \rm of $P$ as
$$T_{\ell}(P):=\varprojlim P[\ell^n](k\sep).$$ This is a $\Z_\ell$-module with a natural action of $\Gal(k\sep/k).$ In order to understand the structure of $T_{\ell}(P),$ we need the following
\begin{lemma}
Let $P$ be a pseudo-Abelian variety over a field $k.$ Then there exists a finite, purely inseparable extension $L/k$ together with an exact sequence
$$0\to H\to P_L\to A\to 0,$$ where $H$ and $A$ are a smooth connected commutative unipotent algebraic group over $L$ and an Abelian variety over $L,$ respectively. Furthermore, given such an $L,$ the exact sequence is uniquely determined by $P.$ \label{Psplitlem}
\end{lemma}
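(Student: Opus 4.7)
My plan is to reduce to the case of a perfect base field via base change to $k\perf$ and then to descend. Over $k\perf$, Chevalley's theorem (\cite{Con}, Theorem 1.1) produces a unique exact sequence
$$0 \to H' \to P_{k\perf} \to A' \to 0$$
with $H'$ a smooth connected commutative affine group and $A'$ an Abelian variety, and the structure theorem for such groups (\cite{SGA3}, Expos\'e XVII, Th\'eor\`eme 7.2.1 b)) further decomposes $H' \cong U' \times_{k\perf} T'$, where $U'$ is unipotent and $T'$ a torus. By Grothendieck's theorem (\cite{SGA3}, Expos\'e XIV, Th\'eor\`eme 1.1), the closed subtorus $T' \hookrightarrow P_{k\perf}$ descends to a closed subtorus $T \subseteq P$ over $k$. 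Since $T$ is a smooth connected affine subgroup of the pseudo-Abelian variety $P$, we must have $T = 0$, whence $T' = 0$ and $H' = U'$ is unipotent.

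For existence, I would next descend the closed subgroup scheme $U' \hookrightarrow P_{k\perf}$ to a finite purely inseparable subextension $L$ of $k\perf/k$. Writing $k\perf$ as the filtered colimit of such $L$ and applying the standard spreading-out for closed subschemes of finite presentation (EGA IV, \S 8), I obtain a finite purely inseparable $L/k$ and a closed subscheme $H \hookrightarrow P_L$ with $H \times_L k\perf = U'$; the fact that $H$ is a subgroup scheme then follows from faithfully flat descent of the condition that the multiplication on $P_L$ restricts to a morphism $H \times_L H \to H$. Forming the quotient $A := P_L/H$ in the category of $L$-group schemes yields an exact sequence $0 \to H \to P_L \to A \to 0$, and each of the properties I need---that $H$ is smooth, connected, and unipotent, and that $A$ is an Abelian variety---is fpqc-local on the base and hence descends from the case over $k\perf$ handled by Chevalley.

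For uniqueness, suppose two exact sequences $0 \to H_i \to P_L \to A_i \to 0$ of the stated form are given, $i = 1, 2$. Since $H_1$ is smooth, connected, and affine (unipotent groups being affine), every $L$-morphism from $H_1$ to the Abelian variety $A_2$ is constant equal to $0$; hence the composite $H_1 \hookrightarrow P_L \twoheadrightarrow A_2$ vanishes, so $H_1 \subseteq H_2$ as closed subgroup schemes of $P_L$. By symmetry $H_1 = H_2$, and the identification $A_1 = A_2$ follows from the universal property of the quotient.

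The principal technical subtlety is in the descent step: one must ensure both that the group-scheme structure on $U'$ really survives the descent to $L$ (rather than only to an intermediate field through which the multiplication is defined) and that the resulting quotient $P_L/H$ is genuinely an Abelian variety, not merely a smooth proper connected commutative $L$-group scheme. Both points reduce to the observation that the base change $L \hookrightarrow k\perf$ is faithfully flat (in fact a universal homeomorphism), so that the relevant properties are both preserved and reflected.
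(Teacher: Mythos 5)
Your argument is correct and follows essentially the same route as the paper: base change to $k\perf$, Chevalley's theorem together with the structure theorem for affine groups, Grothendieck's theorem on maximal tori combined with pseudo-Abelianness to kill the toric part, and a limit/spreading-out argument to descend to a finite purely inseparable subextension of $k\perf/k$. The only cosmetic differences are that the paper descends the entire exact sequence rather than just the subgroup, and proves uniqueness by showing the relevant cokernel is simultaneously proper and smooth connected affine, which is the same idea in substance as your observation that homomorphisms from smooth connected affine groups to Abelian varieties vanish.
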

\begin{proof}
Let $k\perf $ be the perfect closure of $k.$ Since $k\perf$ is perfect, we may apply Chevalley's theorem to $P\times_k\Spec k\perf$, so we obtain an exact sequence $0\to H'\to P\times_k\Spec k\perf\to A'\to 0,$ where $H'$ and $A'$ are a commutative smooth connected affine algebraic group and an Abelian variety over $k\perf,$ respectively. Since all schemes appearing in this sequence are of finite presentation over $k\perf,$ the sequence descends to a finite subextension $L$ of $k\subseteq k\perf,$ which must be purely inseparable. Now suppose that, over a field $L$ as in the Lemma, we have another such exact sequence $0\to H''\to P_L\to A''\to 0.$ Then $H''$ maps into $H'$ and the cokernel of this morphism is isomorphic to the kernel of the induced map $A''\to A,$ which implies that it must be proper over $k.$ Since the cokernel is also smooth, connected, and affine, it follows that it is trivial. Now all that remains to be shown is that $H$ is unipotent, which can be checked after base change to $k\perf.$ We can find a torus $T$ and a unipotent group $U$ over $k\perf$ such that $H\times_k\Spec k\perf=T\times_{k\perf}U.$ By a theorem of Grothendieck (\cite{SGA3}, Exposé XIV, Théorème 1.1), the maximal torus $T$ of $P\times_k\Spec k\perf$ would be defined over $k,$ contradicting our assumption that $P$ be pseudo-Abelian. 
\end{proof}\\
This Lemma allows us to give a precise description of the structure of $T_{\ell}(P)$:
\begin{proposition}
Let $P$ be a pseudo-Abelian variety over a field $k,$ and let $\ell$ be a prime number invertible in $k.$ Then $T_{\ell}(P)$ is a finitely generated free $\Z_{\ell}$-module whose rank equals $2\alpha(P).$ 
\end{proposition}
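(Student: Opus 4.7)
The plan is to reduce to the classical case of Abelian varieties by means of Lemma \ref{Psplitlem}. I would first apply that lemma to obtain a finite purely inseparable extension $L/k$ together with an exact sequence $0\to H\to P_L\to A\to 0$, with $H$ a smooth, connected, commutative unipotent $L$-group and $A$ an Abelian variety over $L$. Since the Chevalley decomposition of $P\times_k\Spec k\perf$ is uniquely determined and is obtained from the above sequence by further base change, one has $\dim A=\alpha(P)$.

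Next, I would argue that the Tate module is insensitive to purely inseparable base change. Since $\ell$ is invertible in $k$ and $P$ is smooth, the differential of $[\ell^n]\colon P\to P$ at the identity is multiplication by $\ell^n$ on $\Lie P$, hence an automorphism; by translation, $[\ell^n]$ is \'etale, so $P[\ell^n]$ is an \'etale $k$-group scheme. Choosing an algebraic closure $\Omega$ containing both $k\sep$ and $L\sep$, we obtain canonical bijections
$$P[\ell^n](k\sep)=P[\ell^n](\Omega)=P_L[\ell^n](\Omega)=P_L[\ell^n](L\sep)$$
compatible with the $\ell$-power transition maps, and passing to inverse limits yields an isomorphism $T_\ell(P)\cong T_\ell(P_L)$ of $\Z_\ell$-modules. (Alternatively, this is a direct application of Proposition \ref{topolequivprop} to the universal homeomorphism $\Spec L\to\Spec k$.)

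Finally, I would exploit the exact sequence over $L$. Because $H\times_L\Spec L\perf$ is a smooth connected commutative unipotent group over a perfect field of characteristic $p$, it is an iterated extension of copies of $\mathbf{G}_a$ and hence killed by some $p^N$; since $H$ is separated over $L$ and $\Spec L\perf\to\Spec L$ is an fpqc cover, this forces $[p^N]=0$ on $H$ itself. Choosing $m\in\N$ with $m\ell\equiv 1\pmod{p^N}$ then shows, exactly as in the proof of Lemma \ref{lisomlem}, that $[\ell^n]$ is an automorphism of $H$, so the long exact sequence of $\ell^n$-torsion attached to $0\to H\to P_L\to A\to 0$ collapses to an isomorphism $P_L[\ell^n]\xrightarrow{\sim} A[\ell^n]$. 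Passing to the limit gives $T_\ell(P_L)\cong T_\ell(A)$, which is a free $\Z_\ell$-module of rank $2\dim A=2\alpha(P)$ by the classical theory of Abelian varieties. The only mildly delicate step is the invariance under the purely inseparable extension $L/k$, but this is an immediate consequence of the \'etale nature of the $\ell^n$-torsion together with the topological invariance of the \'etale site.
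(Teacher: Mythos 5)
Your argument is correct and follows essentially the same route as the paper: split $P_L$ as an extension of an Abelian variety by a unipotent group via Lemma \ref{Psplitlem}, use that $[\ell^n]$ is an automorphism of the unipotent part so the snake lemma gives $P_L[\ell^n]\cong A[\ell^n]$, and identify $T_\ell(P)$ with $T_\ell(P_L)$ by topological invariance of the \'etale site (Proposition \ref{topolequivprop}). The extra details you supply (the $p^N$-torsion argument for $H$ and the explicit \'etaleness of $P[\ell^n]$) are correct elaborations of steps the paper states more briefly.
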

\begin{proof}
Choose a finite purely inseparable extension $L/k$ and an exact sequence $0\to H\to P_L\to A\to 0$ as in Lemma \ref{Psplitlem}. Since the map $[\ell^n]\colon H\to H$ is an isomorphism (as $H$ is unipotent, smooth, and connected), the snake lemma tells us that the morphism $P_L[\ell^n]\to A[\ell^n]$ is an isomorphism. Hence we find 
$$T_{\ell}(P_L)=T_{\ell}(A).$$ However, by topological invariance of the étale site (see Proposition \ref{topolequivprop}), the Galois modules $T_{\ell}(P)$ and $T_{\ell}(P_L)$ are canonically identified, so we obtain our claim. 
\end{proof}

\subsection{Néron models}
Let $R$ be a discrete valuation ring with field of fractions $K$. Suppose further that $\mathscr{G}\to \Spec R$ be a smooth separated group scheme. Recall that $\mathscr{G}\to \Spec R$ is a \it Néron model\footnote{What we define here is called a \it Néron lft-model \rm in \cite{BLR}; in \it op. cit. \rm Néron models are of finite type over $R$ by definition. The term \it Néron lft-model \rm is still used sometimes to emphasize that general smooth separated models are allowed.} \rm of its generic fibre if it satisfies the following universal property: For each smooth morphism $T\to \Spec R$ and each $K$-map $\varphi\colon T_K\to \mathscr{G}_K,$ there exists a unique $R$-map $T \to \mathscr{G}$ which extends $\varphi.$\\
Now let $\Og_K$ be an \it excellent \rm discrete valuation ring with residue field $\kappa,$ maximal ideal $\mathfrak{m}=\langle \pi\rangle$ and field of fractions $K,$ as before. We shall always assume that the characteristic $p$ of $K$ be positive. In this case, the field $K$ is never perfect (since no uniformizer of $\Og_K$ is contained in the image of the Frobenius), so there will be plenty of pseudo-Abelian varieties over $K$ which are not Abelian varieties. Whenever $L$ is a finite extension of $K$ (separable or not), we shall denote the integral closure of $\Og_K$ in $L$ by $\Og_L;$ this is a finite extension of $\Og_K$ since $\Og_K$ is excellent. 
\begin{proposition}
Let $P$ be a pseudo-Abelian variety over $K.$ Then $P$ admits a Néron model $\mathscr{P}$ over $\Og_K.$ Moreover, $\mathscr{P}$ is of finite type over $\Og_K.$ 
\end{proposition}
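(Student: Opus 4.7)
The plan is to invoke the classical criterion for the existence of N\'eron models (see, for example, \cite{BLR}, Chapter 10): a smooth commutative algebraic $K$-group scheme of finite type admits a N\'eron model of finite type over an excellent discrete valuation ring $\Og_K$ provided that, after base change to the strict henselization $K^{\mathrm{sh}}$ of $K$, it contains no closed subgroup isomorphic to $\mathbf{G}_a$ or to $\Gm$. It thus suffices to verify that $P_{K^{\mathrm{sh}}}$ contains neither such subgroup.

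The main step is to show that $P_{K^{\mathrm{sh}}}$ is again pseudo-Abelian. By \cite{T}, Theorem 2.1, every smooth connected algebraic group over any field admits a unique maximal smooth connected affine normal subgroup, and its formation is functorial in the base field. Let $H \subseteq P_{K^{\mathrm{sh}}}$ denote this maximal subgroup; since $H$ is of finite type, it is already defined over some finite separable subextension $K \subseteq K' \subseteq K^{\mathrm{sh}}$, where it coincides with the analogous maximal subgroup of $P_{K'}$. By uniqueness, the latter is $\Gal(K'/K)$-stable and hence descends to a smooth connected affine normal subgroup of $P$ over $K$, which must be trivial since $P$ is pseudo-Abelian. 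It follows that $H$ itself is trivial, and so $P_{K^{\mathrm{sh}}}$ is pseudo-Abelian; in particular, it contains no closed subgroup isomorphic to $\mathbf{G}_a$ or $\Gm$, both of which are smooth, connected, and affine.

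Applying the cited criterion then yields a N\'eron model $\mathscr{P}$ of $P$ over $\Og_K$ which is automatically of finite type. The only delicate point is the Galois descent argument ensuring that the pseudo-Abelian property is preserved under the separable extension $K \subseteq K^{\mathrm{sh}}$; everything else is a direct appeal to the classical theorem, where the excellence hypothesis on $\Og_K$ is used to guarantee the relevant boundedness of $P(K^{\mathrm{sh}})$ through the Japanese property (Proposition \ref{japaneseprop}).
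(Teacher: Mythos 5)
Your proposal is correct and takes essentially the same route as the paper: both reduce the statement to checking that $P_{K^{\mathrm{sh}}}$ contains no closed subgroup isomorphic to $\mathbf{G}_{\mathrm{a}}$ or $\Gm$ and then invoke \cite{BLR}, Chapter 10.2, Theorem 1(b'), the only difference being that the paper simply cites \cite{T}, Lemma 2.3 for the preservation of pseudo-Abelianness under the separable extension $K\subseteq K^{\mathrm{sh}}$, whereas you reprove this special case by Galois descent (which is fine, after enlarging your $K'$ to its Galois closure). Your closing aside slightly misstates the role of excellence: in the cited criterion it serves to replace the completed field $\widehat{K}^{\mathrm{sh}}$ by $K^{\mathrm{sh}}$ (via separability of $K^{\mathrm{sh}}\subseteq \widehat{K}^{\mathrm{sh}}$), rather than entering through the Japanese property and boundedness of $P(K^{\mathrm{sh}})$, but this does not affect the argument.
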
 
\begin{proof} Let $K^{\mathrm{sh}}$ be the field of fractions of a strict Henselization of $\Og_K.$ Then the extension $K\subseteq K^{\mathrm{sh}}$ is separable, so $P_{K^{\mathrm{sh}}}$ is pseudo-Abelian (\cite{T}, Lemma 2.3). In particular, $P_{K^{\mathrm{sh}}}$ does not contain any closed subgroups isomorphic to $\Gm$ or $\mathbf{G}_{\mathrm{a}}.$ Therefore the claim follows from \cite{BLR}, Chapter 10.2, Theorem 1(b'). 
\end{proof}
\begin{definition}
Let $P$ be a pseudo-Abelian variety over $K$ with Néron model $\mathscr{P}$ over $\Og_K.$ Let $\mathscr{P}'\to \Spec \Og_K$ be another smooth separated model of finite type of $P.$ We define the \rm defect \it $\delta(\mathscr{P}')$ of the model $\mathscr{P}'$ to be 
$$\delta(\mathscr{P}'):=\alpha(P)-\alpha(\mathscr{P}'_\kappa).$$
Moreover, we define the \rm defect $\delta(P)$ of \it $P$ to be the defect of the Néron model $\delta(\mathscr{P}).$  
\end{definition}
By Proposition \ref{deltajumpprop}, $\delta(\mathscr{P}')\geq 0$ for all smooth group schemes $\mathscr{P}'\to \Spec \Og_K$ with pseudo-Abelian generic fibre. In particular, $\delta(P)\geq 0$ for all pseudo-Abelian varieties $P$ over $K.$

\section{The Néron-Ogg-Shafarevich criterion for pseudo-Abelian varieties}
A fundamental result about Néron models of Abelian varieties is that the information about whether the Abelian variety has good reduction is completely encoded the Galois representation on the Abelian variety's Tate module. In this chapter, we shall prove an analogous result for pseudo-Abelian varieties. First of all, we must define what \it good reduction \rm should mean for pseudo-Abelian varieties which are not Abelian. 
\begin{definition}
Let $\Og_K$ be a discrete valuation ring with field of fractions $K.$ Let $P$ be a pseudo-Abelian variety over $K.$ Then $P$ has \rm good reduction \it if and only if $\delta(P)=0,$ i.e., if and only if the virtual Abelian rank is constant in the family $\mathscr{P}\to \Spec \Og_K.$ \label{goodreddef}
\end{definition}
This definition coincides with the usual one of $P$ is Abelian. Before we state our main result, we need one more technical preparation. Fix an excellent discrete valuation ring $\Og_K$ with residue field $\kappa$ and field of fractions $K.$ Assume $p:=\mathrm{char}\, K>0.$ 
\begin{lemma}
(i) Let $R\subseteq S$ be a finite extension of (not necessarily excellent) discrete valuation rings with the property that the induced extension of fields of fractions is purely inseparable. Then the morphism 
$\Spec S\to \Spec R$ is a universal homeomorphism. \\
(ii) Suppose that $R$ be excellent and let $S$ be the integral closure of $R$ in a finite purely inseparable extension of the field of fractions of $R.$ Then $S$ is a discrete valuation ring and the extension $R\subseteq S$ is finite.  \label{univlem}
\end{lemma}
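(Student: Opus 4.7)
My plan for part (i) is to verify that the morphism $\Spec S\to\Spec R$ satisfies the standard criterion for being a universal homeomorphism, namely that it is integral (even finite), surjective, and universally injective (radicial). Finiteness is the hypothesis. Surjectivity is immediate because both $R$ and $S$ are DVRs, so each has exactly two prime ideals, and a finite map of local rings sends closed point to closed point. The substantive point is universal injectivity, which amounts to injectivity on underlying topological spaces (trivial here, again because each ring has only two primes) together with pure inseparability of the residue field extension $R/\mathfrak{m}_R\to S/\mathfrak{m}_S$. For the latter I would exploit the following observation: for any $s\in S$, pure inseparability of $L/K$ gives $s^{p^m}\in K$ for some $m$; since $s$ is integral over $R$ so is $s^{p^m}$, and since $R$ is integrally closed in $K$, this forces $s^{p^m}\in R$. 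Reducing modulo $\mathfrak{m}_S$ shows $\bar s^{\,p^m}$ lies in the image of $R/\mathfrak{m}_R$.

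For part (ii), finiteness of $S$ over $R$ is precisely the content of Proposition \ref{japaneseprop}, since excellent DVRs are Japanese. The remaining task is to show that $S$ is a DVR; the plan is to check the four defining properties (Noetherian, integrally closed domain, Krull dimension one, local) in turn. Noetherianity follows from finiteness over the Noetherian ring $R$; integral closedness is automatic from the construction of $S$; and $\dim S=1$ because integral extensions preserve Krull dimension and $S$ is not a field (as $\mathfrak{m}_R S\neq S$ by Nakayama applied to the finite $R$-module $S$). The only real content is locality. Here I would recycle the argument from (i): if $\mathfrak{q},\mathfrak{q}'$ are two primes of $S$ lying over $\mathfrak{m}_R$ and $s\in\mathfrak{q}$, then $s^{p^m}\in R\cap\mathfrak{q}=\mathfrak{m}_R\subseteq\mathfrak{q}'$ for some $m$, so $s\in\mathfrak{q}'$; by symmetry $\mathfrak{q}=\mathfrak{q}'$.

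The proof poses no serious obstacle; the key insight, shared by both parts, is the trivial but useful fact that in a purely inseparable tower every element of the larger ring has some $p^m$-th power in the smaller one, which reduces both the residue-field and the unique-prime assertions to properties of $\mathfrak{m}_R$ alone. The only point one really needs to remember is the characterization of universal homeomorphisms as integral $+$ surjective $+$ radicial, rather than attempting to verify the homeomorphism property directly after every possible base change.
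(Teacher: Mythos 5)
Your proposal is correct and takes essentially the same route as the paper: the same characterization of universal homeomorphisms (integral, bijective on points, purely inseparable residue field extensions), the same $p^m$-th power trick (using that the DVR $R$ is integrally closed in $K$) for the residue fields, Japanese-ness of excellent DVRs for finiteness, and the identical power-of-an-element argument to show that $S$ has a unique nonzero prime. The only cosmetic difference is that the paper dispatches the characteristic-zero case separately, which your $p^m$-power phrasing implicitly presupposes is not needed.
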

\begin{proof}
(i) The statement is clear if the field of fractions of $R$ has characteristic zero, so we shall assume that the characteristic $p$ of this field be positive. A morphism between affine schemes is a universal homeomorphism of and only if it is bijective, induces purely inseparable extensions on residue fields, and the corresponding map on rings is integral (see \cite{EGA IV}, Proposition 2.4.5). Integrality and bijectivity are clear. Hence all that remains to be shown is that the induced extensions of residue fields at the special points is purely inseparable. Let $x$ be an element of the residue field of $S.$ Choose an element $y$ of $S$ lifting $x.$ By our assumption on fields of fractions, there exists some $n\in \N$ such that $y^{p^n}\in R.$ If we choose such an $n,$ we see that $x^{p^n}$ is contained in the residue field of $R.$ Hence the claim follows.\\
(ii) The claim that $S$ is finite over $R$ follows because $R$ is excellent and hence Japanese (see Proposition \ref{japaneseprop}). Therefore we know already that $S$ is a Dedekind domain all of whose prime ideals are principal. Hence all that remains to be shown is that $S$ is local. Let $\mathfrak{p},$ $\mathfrak{q}$ be two non-zero prime ideals in $S.$ Then $\mathfrak{p}\cap R$ and $\mathfrak{q}\cap R$ are both equal to the maximal ideal of $R.$ In particular, each element of $\mathfrak{p}$ has a power which lies in $\mathfrak{q},$ and vice-versa. This forces $\mathfrak{p}=\mathfrak{q}.$ 
\end{proof}\\
For later use, recall the following \it structure theorem for quasi-finite schemes over Henselian local rings:\rm
\begin{proposition}
Let $X\to \Spec R$ be a quasi-finite morphism of schemes, where $R$ is a Noetherian Henselian local ring. Then $X$ admits a unique decomposition $X=X^{\mathrm{f}}\sqcup X^{\eta}$ into disjoint open (and closed) subschemes such that $X^{\mathrm{f}}\to \Spec R$ is finite and such that the special fibre of $X^{\eta}\to \Spec R$ is empty. This decomposition is functorial in $X.$ \label{decompprop}
\end{proposition}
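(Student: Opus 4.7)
My plan is to construct the decomposition by embedding $X$ as an open subscheme of a finite $R$-scheme via Zariski's Main Theorem and then exploiting the product decomposition of finite algebras over a Henselian local ring.

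Working Zariski-locally on $X$ (and using the uniqueness statement established below to glue the local pieces), I may assume $X \to \Spec R$ is separated. By Zariski's Main Theorem, this quasi-finite separated morphism admits a factorization $X \hookrightarrow \overline{X} \to \Spec R$ in which the first arrow is an open immersion and the second is finite; write $\overline{X} = \Spec B$ with $B$ a finite $R$-algebra. Since $R$ is Henselian, $B$ splits as a product $B = \prod_{j} B_{j}$ of local (automatically Henselian) finite $R$-algebras, so $\overline{X} = \bigsqcup_{j} \Spec B_{j}$. The unique closed point $y_{j}$ of each $\Spec B_{j}$ lies over the closed point of $\Spec R$, as the integrality of $R \to B_{j}$ forces the contraction of its maximal ideal to be maximal.

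For each $j$, the open subset $X \cap \Spec B_{j} \subseteq \Spec B_{j}$ either contains $y_{j}$ and hence coincides with $\Spec B_{j}$ (every open neighbourhood of the unique closed point of the spectrum of a local ring is the whole scheme), or else avoids $y_{j}$ and hence has empty special fibre. I set
\[
X^{\mathrm{f}} := \bigsqcup_{j \colon y_{j} \in X} \Spec B_{j}, \qquad X^{\eta} := \bigsqcup_{j \colon y_{j} \notin X} (X \cap \Spec B_{j}).
\]
These are clopen in $X$, cover $X$ disjointly, and satisfy the required properties: $X^{\mathrm{f}} \to \Spec R$ is finite (being a finite disjoint union of the $\Spec B_{j}$), while $X^{\eta}$ has empty special fibre.

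Uniqueness and functoriality both reduce to the following observation: if $W \subseteq X$ is any clopen subscheme with $W \cap X_{\kappa} = \emptyset$, then $W \subseteq X^{\eta}$. Indeed, $W \cap X^{\mathrm{f}}$ is clopen in the finite $R$-scheme $X^{\mathrm{f}} = \Spec C$; writing $C = \prod_{k} C_{k}$ as a product of local Henselian factors, this clopen is a disjoint union of certain $\Spec C_{k}$, any nonempty one of which would contribute its closed point (necessarily lying over $\mathfrak{m}_{R}$) to the intersection $W \cap X_{\kappa}$, a contradiction. Uniqueness follows at once: for any second decomposition $X = X_{1} \sqcup X_{2}$, the clopen $X_{2}$ has empty special fibre, hence $X_{2} \subseteq X^{\eta}$, and a symmetric argument gives equality. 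For functoriality, given an $R$-morphism $f \colon X \to Y$ of quasi-finite $R$-schemes, the preimage $f^{-1}(Y^{\eta})$ is clopen in $X$ and misses $X_{\kappa}$ (since every $x \in X_{\kappa}$ satisfies $f(x) \in Y_{\kappa} \subseteq Y^{\mathrm{f}}$), so the observation yields $f^{-1}(Y^{\eta}) \subseteq X^{\eta}$, equivalently $f(X^{\mathrm{f}}) \subseteq Y^{\mathrm{f}}$.

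The main technical obstacle is the reduction to the separated case, since Zariski's Main Theorem in its standard form requires separation while the proposition as stated allows arbitrary quasi-finite morphisms. I expect to handle this by constructing the decomposition on an affine open cover of $X$ (where separation is automatic) and gluing via the uniqueness statement; the overlaps require a little care because an open subscheme of a finite $R$-scheme need not be finite, but the characterization of $X^{\eta}$ as the maximal clopen subscheme avoiding the special fibre should make the gluing go through cleanly.
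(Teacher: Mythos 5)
Your argument in the separated case is correct, and it is the standard one: Zariski's Main Theorem to realize $X$ as an open subscheme of a finite $R$-scheme, the splitting of a finite algebra over a Henselian local ring into a product of local algebras, and the observation that an open subset of a local scheme containing the closed point is everything. This is essentially the proof behind the reference the paper cites for this proposition (the paper gives no argument of its own), and your auxiliary steps check out: the closed fibre of each local finite factor is its unique closed point, clopen subsets of a finite scheme over a Henselian local ring are unions of the local factors, and your characterization of $X^{\eta}$ as the maximal clopen subscheme missing the special fibre does give uniqueness and functoriality.

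The step you defer -- reducing the general quasi-finite case to the separated case by gluing over an affine cover -- is a genuine gap, and no amount of care will close it, because the statement is false without a separatedness hypothesis. Let $R$ be a Henselian discrete valuation ring with fraction field $F$ and let $X$ be two copies of $\Spec R$ glued along their generic points $\Spec F$. Then $X\to \Spec R$ is quasi-finite (even \'etale) but not separated; $X$ is connected, so its only clopen subschemes are $\emptyset$ and $X$, while $X$ itself is not finite over $R$ (it is not separated) and its special fibre, consisting of the two closed points, is nonempty. Hence no decomposition of the required kind exists, and one sees concretely how your gluing breaks down: on each chart the decomposition is $(\Spec R)^{\mathrm{f}}=\Spec R$, and these patches would force $X^{\mathrm{f}}=X$. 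So the proposition should carry the hypothesis that $X\to\Spec R$ be separated, as in the standard form of this structure theorem; in the paper it is only ever applied to subschemes such as $\mathscr{P}[\ell^n]$ of separated $\Og_K$-group schemes, where separatedness is automatic, and with that hypothesis added your proof is complete as it stands.
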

\begin{proof}
See \cite{Con2}, Theorem 4.10.  
\end{proof}\\
Now we let $\Og_K$ be as at the beginning of this Paragraph, and let $P$ be a pseudo-Abelian variety over $K.$ Choose a finite, purely inseparable extension $L$ of $K$ such that there is an exact sequence
$$0\to U\to P_L\to A\to 0,$$ where $U$ and $A$ are a smooth, connected, commutative unipotent group over $L$ and an Abelian variety over $L,$ respectively.

\begin{proposition}
Let $\Og_K$ and $P$ be as above. Further let $\mathscr{P}\to \Spec \Og_K$ be the Néron model of $P.$ Denote by $\Og_L$ the integral closure of $\Og_K$ in $L$ and by $\mathscr{A}$ the Néron model of $A$ over $\Og_L.$ Let $n\in \N$ and let $\ell$ be a prime number invertible in $\Og_K.$ Then the morphisms 
$$(\mathscr{P}_{\Og_L})[\ell^n]\to \mathscr{A}[\ell^n]$$ and $$\mathscr{P}[\ell^n]\to \Res_{\Og_L/\Og_K}\mathscr{A}[\ell^n]$$ induced by $\mathscr{P}_{\Og_L}\to \mathscr{A}$ and $\mathscr{P}\to \Res_{\Og_L/\Og_K}\mathscr{A}$, respectively, are isomorphisms. \label{torsisoprop}
\end{proposition}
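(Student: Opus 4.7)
The plan is to establish both isomorphisms simultaneously via topological invariance of the \'etale site, reducing the claim to an identification of finite \'etale $K$-schemes. The N\'eron mapping property of $\mathscr{P},$ applied to the smooth $\Og_L$-scheme $\mathscr{P}_{\Og_L}$ equipped with the generic-fibre morphism $P_L\to A,$ produces a canonical $\Og_L$-morphism $\mathscr{P}_{\Og_L}\to \mathscr{A}$ extending $P_L\to A;$ via the adjunction $(-)_{\Og_L}\dashv \Res_{\Og_L/\Og_K}$ (noting that $\Res_{\Og_L/\Og_K}\mathscr{A}$ is representable by Proposition \ref{universalhomeoprop}, since $\Og_L/\Og_K$ is a finite locally free universal homeomorphism by Lemma \ref{univlem}), this corresponds to an $\Og_K$-morphism $\mathscr{P}\to \Res_{\Og_L/\Og_K}\mathscr{A},$ and the two morphisms of the statement are obtained by passing to $\ell^n$-torsion.

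Since $\ell$ is invertible on $\Og_K$ and the group schemes $\mathscr{P},$ $\mathscr{P}_{\Og_L},$ and $\mathscr{A}$ are smooth, multiplication by $\ell^n$ on each of them is \'etale (its differential at the identity is multiplication by $\ell^n$ on the Lie algebra, hence an isomorphism), so their $\ell^n$-torsion subschemes are \'etale over the respective bases. By topological invariance of the \'etale site (Proposition \ref{topolequivprop}) applied to the universal homeomorphism $\Spec \Og_L\to \Spec \Og_K,$ pullback along $\Og_L/\Og_K$ and $\Res_{\Og_L/\Og_K}$ are quasi-inverse equivalences between \'etale $\Og_K$-schemes and \'etale $\Og_L$-schemes; consequently the two isomorphism claims are equivalent, and it suffices to prove the one over $\Og_K.$

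I will establish this by comparing functors of points on the smooth site of $\Og_K.$ For $T$ smooth over $\Og_K,$ the N\'eron mapping property gives $\mathscr{P}[\ell^n](T)=P[\ell^n](T_K),$ while the defining property of Weil restriction together with N\'eron for $\mathscr{A}$ applied to the smooth $\Og_L$-scheme $T_{\Og_L}$ (using the canonical identification $(T_{\Og_L})_L=(T_K)_L$) gives $\Res_{\Og_L/\Og_K}(\mathscr{A}[\ell^n])(T)=\Res_{L/K}(A[\ell^n])(T_K).$ It therefore suffices to identify $P[\ell^n]$ with $\Res_{L/K}(A[\ell^n])$ as \'etale $K$-schemes. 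The snake lemma applied to $0\to U\to P_L\to A\to 0,$ combined with Lemma \ref{lisomlem} (which implies that $[\ell^n]$ is an isomorphism on $U,$ since $U$ is unipotent, and hence $U[\ell^n]=0$), gives $P_L[\ell^n]\cong A[\ell^n]$ as $L$-schemes; a second application of Proposition \ref{topolequivprop}, this time to the universal homeomorphism $\Spec L\to \Spec K,$ then yields $P[\ell^n]=\Res_{L/K}(P_L[\ell^n])\cong \Res_{L/K}(A[\ell^n]),$ completing the chain of identifications. The main obstacle is not really conceptual but notational: one must verify that the map produced by these functorial identifications genuinely coincides with the canonical map of the statement, which amounts to unwinding the various adjunctions carefully.
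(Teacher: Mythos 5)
Your argument is correct, and it relies on the same two pillars as the paper's proof (topological invariance of the \'etale site along the universal homeomorphism $\Spec \Og_L\to\Spec\Og_K$, the N\'eron mapping property, and the vanishing of $U[\ell^n]$), but it arranges them differently. The paper proves the \emph{first} isomorphism directly: it descends $\mathscr{A}[\ell^n]$ along $\Spec\Og_L\to\Spec\Og_K$ to an \'etale $\Og_K$-group scheme $\mathscr{I}$ whose generic fibre is $P[\ell^n]$, maps $\mathscr{I}\to\mathscr{P}$ by the N\'eron property of $\mathscr{P}$, and checks that the base change to $\Og_L$ is an explicit inverse of $(\mathscr{P}_{\Og_L})[\ell^n]\to\mathscr{A}[\ell^n]$; the second isomorphism is then obtained by applying $\Res_{\Og_L/\Og_K}$. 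You instead prove the \emph{second} isomorphism by a Yoneda comparison on smooth test schemes, invoking the N\'eron properties of both $\mathscr{P}$ and $\mathscr{A}$ and the generic-fibre identification $P[\ell^n]\cong\Res_{L/K}(A[\ell^n])$, and recover the first isomorphism from the equivalence of \'etale sites. The compatibility you flag at the end is indeed routine and can be dispatched cleanly: all of your identifications are restriction-to-the-generic-fibre maps (bijective by the two N\'eron properties, using $(T_{\Og_L})_L=(T_K)_L$ and the compatibility of Weil restriction with base change), they are natural in the canonical map, and the generic fibre of the canonical map is exactly the composite of the adjunction unit $P[\ell^n]\to\Res_{L/K}(P_L[\ell^n])$ with $\Res_{L/K}$ of the snake-lemma isomorphism $P_L[\ell^n]\to A[\ell^n]$; so bijectivity on $T$-points follows, and taking $T$ to be the source and target (both \'etale over $\Og_K$) gives the inverse. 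One small slip to fix: the morphism $\mathscr{P}_{\Og_L}\to\mathscr{A}$ extending $P_L\to A$ comes from the N\'eron mapping property of $\mathscr{A}$ over $\Og_L$ applied to the smooth $\Og_L$-scheme $\mathscr{P}_{\Og_L}$, not from that of $\mathscr{P}$. What your route buys is that it avoids explicitly constructing an inverse; what the paper's route buys is that it only uses the N\'eron property once and exhibits the inverse concretely.
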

\begin{proof}
Let us begin with the first isomorphism. Since $\mathscr{A}[\ell^n]$ is étale over $\Og_L$ and $\Spec \Og_L\to \Spec \Og_K$ is a universal homeomorphism by Lemma \ref{univlem}, there exists a unique étale group scheme $\mathscr{I}$ over $\Og_K$ together with an isomorphism 
$$\mathscr{I}\times_{\Og_K}\Spec \Og_L\to \mathscr{A}[\ell^n]$$ (see Proposition \ref{topolequivprop}). Now observe that the generic fibre of $\mathscr{I}$ is canonically isomorphic to $P[\ell^n],$ so the universal property of the Néron model gives us a morphism $\mathscr{I}\to \mathscr{P}$ extending this isomorphism. This map, in turn, induces a morphism
$$\mathscr{A}[\ell^n]=\mathscr{I}\times_{\Og_K}\Spec \Og_L\to (\mathscr{P}_{\Og_L})[\ell^n],$$ which is an inverse to the morphism from the Proposition. The second isomorphism can be constructed from the first by applying the functor $\Res_{\Og_L/\Og_K}-.$ 
\end{proof}
\begin{corollary}
Keep the notation from the previous Proposition and let $\kappa'$ be the residue field of $\Og_L.$ Then we have \\
(i) $\alpha(P)=\alpha(\Res_{L/K}A)$, \\
(ii) $\alpha(\mathscr{P}\times_{\Og_K}\Spec \kappa)=\alpha((\Res_{\Og_L/\Og_K}\mathscr{A})\times_{\Og_K}\Spec \kappa),$\\
(iii) $t(\mathscr{P}\times_{\Og_K}\Spec \kappa)=t((\Res_{\Og_L/\Og_K}\mathscr{A})\times_{\Og_K}\Spec \kappa),$\\
(iv) $\alpha(\mathscr{P}\times_{\Og_K}\Spec \kappa')=\alpha(\mathscr{A}\times_{\Og_L}\Spec \kappa'),$ and \\
(v) $t(\mathscr{P}\times_{\Og_K}\Spec \kappa')=t(\mathscr{A}\times_{\Og_L}\Spec \kappa').$
\end{corollary}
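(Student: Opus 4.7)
The plan is to deduce each of (i)--(v) from Proposition \ref{alphainvariantprop} applied to a suitable morphism of smooth commutative group schemes, with the isomorphisms of $\ell^n$-torsion coming from Proposition \ref{torsisoprop}. The principal technical obstacle is that Proposition \ref{alphainvariantprop} requires \emph{connected} group schemes, whereas the relevant special fibres in (ii)--(v) need not be connected; this will be circumvented by a coprimality trick on $\ell$.

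For (i) I take the generic fibre of $\mathscr{P} \to \Res_{\Og_L/\Og_K}\mathscr{A}$, namely the morphism $P \to \Res_{L/K} A$. The source $P$ is pseudo-Abelian, hence smooth, connected, and commutative. The target $\Res_{L/K} A$ is smooth (Weil restriction along a finite locally free morphism preserves smoothness) and commutative; for connectedness, I base change to $K\perf$, where $L \otimes_K K\perf$ is a local Artinian $K\perf$-algebra with residue field $K\perf$. Weil restriction along this nilpotent thickening realises $(\Res_{L/K} A)_{K\perf}$ as an extension of $A_{K\perf}$ by a smooth connected unipotent group, hence as smooth and connected; since the purely inseparable map $\Spec K\perf \to \Spec K$ is a universal homeomorphism, $\Res_{L/K} A$ itself is connected. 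Proposition \ref{torsisoprop} provides isomorphisms $P[\ell^n] \to (\Res_{L/K} A)[\ell^n]$ for every prime $\ell$ invertible in $K$, so Proposition \ref{alphainvariantprop} yields $\alpha(P) = \alpha(\Res_{L/K} A)$.

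For (iv) and (v), I take the $\kappa'$-fibre of $\mathscr{P}_{\Og_L} \to \mathscr{A}$ to obtain $\phi : \mathscr{P}_{\kappa'} \to \mathscr{A}_{\kappa'}$, which sends identity components to identity components and therefore restricts to a morphism $\phi^0 : \mathscr{P}_{\kappa'}^0 \to \mathscr{A}_{\kappa'}^0$ of smooth connected commutative $\kappa'$-group schemes. To invoke Proposition \ref{alphainvariantprop} I need $\phi^0[\ell^n]$ to be an isomorphism for infinitely many primes $\ell$. Let $N$ denote the product of the (finite) orders of $\pi_0(\mathscr{P}_{\kappa'})(\kappa\sep)$ and $\pi_0(\mathscr{A}_{\kappa'})(\kappa\sep)$. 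For any prime $\ell$ invertible in $\kappa'$ with $\gcd(\ell,N) = 1$, the snake lemma applied to $0 \to G^0 \to G \to \pi_0(G) \to 0$ (with $G = \mathscr{P}_{\kappa'}$ or $G = \mathscr{A}_{\kappa'}$) shows that $G^0[\ell^n] = G[\ell^n]$; the isomorphism $\phi[\ell^n]$ furnished by Proposition \ref{torsisoprop} therefore restricts to an isomorphism $\phi^0[\ell^n]$ on identity components. Since infinitely many primes satisfy these conditions, Proposition \ref{alphainvariantprop} gives both (iv) and (v). Statements (ii) and (iii) follow from the entirely parallel argument applied to the $\kappa$-fibre $\mathscr{P}_\kappa \to (\Res_{\Og_L/\Og_K}\mathscr{A})_\kappa$ of $\mathscr{P} \to \Res_{\Og_L/\Og_K}\mathscr{A}$.
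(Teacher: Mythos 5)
Your proposal is correct and takes essentially the same route as the paper, whose entire proof of the corollary is that it "follows from Proposition \ref{torsisoprop} together with Proposition \ref{alphainvariantprop}". The points you elaborate --- the connectedness of $\Res_{L/K}A$ and the coprimality trick reducing the (possibly disconnected) special fibres to their identity components so that Proposition \ref{alphainvariantprop} applies --- are exactly the details the paper leaves implicit, not a different argument.
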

\begin{proof}
This follows from Proposition \ref{torsisoprop} together with Proposition \ref{alphainvariantprop}.
\end{proof}\\
Now fix a separable closure $K\sep$ of $K.$ Let $\Og_K^{\mathrm{sh}}$ be a strict Henselization of $\Og_K$ given by the choice of a separable closure $\kappa\sep$ of $\kappa$ and let $\Og_K\to \Og_K^{\mathrm{sh}}$ be the corresponding embedding. Let $\Og_{K,0}$ be the localization of the integral closure $\overline{\Og}_{K}$ of $\Og_K$ in $K\sep$ at a maximal ideal $\mathfrak{n}$ lying over the maximal ideal of $\Og_K^{\mathrm{sh}}.$ Such a choice gives us an embedding $\Og_K^{\mathrm{sh}}\to \Og_{K,0}$ (since $\Og_{K,0}$ is strictly Henselian local) and hence and embedding $K^{\mathrm{sh}}\to K\sep.$ Let $K^{\mathrm{sh}}:=\Frac \Og_K^{\mathrm{sh}}.$ We let the \it inertia group \rm $I_K$ of $K$ be the subgroup $\Gal(K\sep/K^{\mathrm{sh}})$ of $\Gal(K\sep/K).$ Of course, $I_K$ depends upon the choice of a maximal ideal as above, but this ambiguity is harmless for our purposes.

Now suppose that $L$ be a finite purely inseparable extension of $K,$ such as the one chosen at the beginning of this paragraph. Then $L\sep:=L\otimes_KK\sep$ is a separable closure of $L,$ and the absolute Galois groups $\Gal(L\sep/L)$ and $\Gal(K\sep/K)$ are canonically isomorphic as profinite groups via the morphism $\Gal(L\sep/L)\to \Gal(K\sep/K),$ $\sigma\mapsto \sigma\mid_{K\sep}.$ 

Denoting the residue field of $\Og_L$ by $\kappa',$ we observe that $\Og_L\otimes_{\Og_K}\Og_K^{\mathrm{sh}}$ is the strict Henselization of $\Og_L$ with respect to the separable closure $\kappa'\otimes_{\kappa}\kappa\sep$ of $\kappa'.$ The morphism $\Spec\overline{\Og}_{L}\to \Spec \overline{\Og}_{K}$ (where $\overline{\Og}_{L}$ is the integral closure of $\Og_L$ in $L\otimes_KK\sep$) is a homeomorphism, so there is a unique maximal ideal $\mathfrak{n'}\subseteq \overline{\Og}_L$  corresponding to $\mathfrak{n}.$ Let $\Og_{L,0}$ be the localization of $\overline{\Og}_{L}$ at $\mathfrak{n}'.$ We obtain a unique embedding
$$\Og_L\otimes_{\Og_K}\Og_K^{\mathrm{sh}}\to \Og_{L,0}$$ compatible with the embedding $\kappa'\otimes_{\kappa}\kappa\sep\to \Og_{L,0}/\mathfrak{n'}$ (note that the target of this morphism is an algebraically closed field), and hence an embedding $L\otimes_KK^{\mathrm{sh}}\to L\otimes_KK\sep$. The subgroups $\Gal(L\otimes_KK\sep/L\otimes_KK^{\mathrm{sh}})$ and $\Gal(K\sep/K^{\mathrm{sh}})$ are canonically identified under the isomorphism $\Gal(L\sep/L)\to \Gal(K\sep/K)$ constructed above. \

We are now ready to state and prove our first main result. The proof of the main new implications is self-contained, in so far as it does not use the well-known analogous result for Abelian varieties. We follow the usual proof for Abelian varieties (see, for example, \cite{BLR}, Chapter 7.4, proof of Theorem 5) quite closely.

\begin{theorem}
Using the same notation as before, the following are equivalent: \\
(i) The pseudo-Abelian variety $P$ has good reduction over $\Og_K,$ \\
(ii) the pseudo-Abelian variety $\Res_{L/K} A$ has good reduction over $\Og_K,$\\
(iii) the Abelian variety $A$ has good reduction over $\Og_L,$\\
(iv) there exists a prime number $\ell\in \Og_K^\times$ such that the Galois representation on $T_\ell(P)$ is unramified, and \\
(v) for all prime numbers $\ell\in \Og_K^\times,$ the Galois representation on $T_\ell(P)$ is unramified.  \label{NOScritthhm}
\end{theorem}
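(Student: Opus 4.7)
The plan is to establish the chain (iv) $\Leftrightarrow$ (v) $\Leftrightarrow$ (iii) $\Leftrightarrow$ (i) $\Leftrightarrow$ (ii), combining Proposition \ref{torsisoprop} and its Corollary (which bridge the pseudo-Abelian and purely abelian worlds) with the classical N\'eron-Ogg-Shafarevich criterion for the abelian variety $A$ over $\Og_L$.

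First, I would relate the Galois-theoretic conditions (iv) and (v) to the corresponding statements for $A$. Since $L/K$ is purely inseparable, the canonical morphism $\Gal(L\sep/L)\to\Gal(K\sep/K)$ is an isomorphism which also identifies the inertia subgroups (from the discussion of compatible strict Henselisations immediately preceding the theorem). Together with Lemma \ref{Psplitlem} and the vanishing of $\ell^n$-torsion on the unipotent part, this yields a canonical identification $T_\ell(P)\cong T_\ell(A)$ of Galois modules (as already exploited in the earlier computation of $T_\ell(P)$). Hence (iv) (resp.\ (v)) holds for $P$ if and only if it holds for $A$, and the classical N\'eron-Ogg-Shafarevich criterion applied to $A$ over $\Og_L$ gives the equivalence with (iii).

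Next, I would prove (i) $\Leftrightarrow$ (iii). By Lemma \ref{Psplitlem}, the abelian part $A$ has dimension $\alpha(P_L)=\alpha(P)$, since the unipotent quotient contributes nothing to the virtual abelian rank and the latter is invariant under base field extension. Combining part (iv) of the Corollary to Proposition \ref{torsisoprop} with the base-field invariance $\alpha(\mathscr{P}_\kappa)=\alpha(\mathscr{P}_{\kappa'})$ rewrites the defect as
\[
\delta(P)=\alpha(P)-\alpha(\mathscr{P}_\kappa)=\dim A-\alpha(\mathscr{A}_{\kappa'}).
\]
Now $\mathscr{A}_{\kappa'}^0$ is a smooth connected commutative group scheme of dimension $\dim A$, so Chevalley's theorem shows that $\alpha(\mathscr{A}_{\kappa'})=\alpha(\mathscr{A}_{\kappa'}^0)=\dim A$ precisely when the affine part of its Chevalley decomposition is trivial, i.e.\ when $\mathscr{A}_{\kappa'}^0$ is an abelian variety. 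This is exactly good reduction of $A$, so (i) $\Leftrightarrow$ (iii).

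Finally, for (i) $\Leftrightarrow$ (ii), I would apply the preceding argument verbatim to the group $\Res_{L/K}A$ in place of $P$. The counit $(\Res_{L/K}A)_L\to A$ is surjective with kernel a smooth connected unipotent group (the vector group associated with the nilpotent ideal of $L\otimes_K L$), so Lemma \ref{Psplitlem} applies to $\Res_{L/K}A$ with the \emph{same} purely inseparable extension $L/K$ and the \emph{same} abelian variety $A$; the defect computation above then shows (ii) $\Leftrightarrow$ (iii). The main obstacle I expect is the last step: one must verify cleanly that $\Res_{L/K}A$ is genuinely pseudo-Abelian (so that its N\'eron model and defect are defined, via Totaro's results) and that the kernel of $(\Res_{L/K}A)_L\to A$ is unipotent, after which the formal structure of the argument carries over without modification.
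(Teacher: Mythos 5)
Your proposal is correct, but it takes a genuinely different route from the paper. The paper deliberately gives a self-contained argument: after reducing to strictly Henselian $\Og_K$, it proves $(i)\Rightarrow(v)$ by showing $\mathscr{P}[\ell^n]$ is finite (via the decomposition of Proposition \ref{decompprop} and an order comparison forced by $\delta(P)=0$), and $(iv)\Rightarrow(i)$ by the counting inequality $\ell^{2n\alpha(P)}\leq\ord\Phi\cdot\ell^{n(t(\mathscr{P}_\kappa)+2\alpha(\mathscr{P}_\kappa))}$ combined with $t(\mathscr{P}_\kappa)\leq\delta(P)$ from Proposition \ref{deltajumpprop}; the equivalences with (ii) and (iii) are then obtained by running the same circle of implications for $\Res_{L/K}A$ and $A$, using the canonical identification of the three Tate modules. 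You instead reduce everything to the Abelian variety $A$ over $\Og_L$: you identify $T_\ell(P)\cong T_\ell(A)$ compatibly with the identification of inertia groups, quote the classical N\'eron--Ogg--Shafarevich criterion for $A$, and obtain $(i)\Leftrightarrow(iii)$ from the defect identity $\delta(P)=\dim A-\alpha(\mathscr{A}_{\kappa'})$, which follows from part (iv) of the Corollary to Proposition \ref{torsisoprop} and base-field invariance of $\alpha(-)$; the case of $\Res_{L/K}A$ is handled by rerunning this computation with the counit $(\Res_{L/K}A)_L\to A$, whose kernel is smooth connected unipotent (\cite{CGP}, and implicitly presupposed by the theorem statement, which already calls $\Res_{L/K}A$ pseudo-Abelian). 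Your route is shorter and more modular, but it uses the classical criterion as a black box, whereas the paper's argument re-derives it in the pseudo-Abelian setting and thereby exhibits directly how the inequality $t(\mathscr{P}_\kappa)\leq\delta(P)$ enters; one small caveat is that your parenthetical description of the counit kernel as a vector group is unnecessary (and not needed anywhere) --- smooth connected unipotent is all that is required.
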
 
\begin{proof}
Note that, for all claims made in the Proposition, the truth value does not change when replacing $\Og_K$ by a strict Henselization of $\Og_K$ (\cite{BLR}, Chapter 7.2, Corollary 2). Hence we shall assume that $\Og_K$ be strictly Henselian (so that, as a consequence, $\kappa$ is separably closed). In particular, we have $I_K=\Gal(K\sep/K).$ Let $\mathscr{P}$ be the Néron model of $P$ over $\Og_K.$ Then there is a finite étale group scheme $\Phi$ over $\kappa$ together with an exact sequence
$$0\to \mathscr{P}^0_{\kappa}\to \mathscr{P}_{\kappa}\to \Phi\to 0.$$
Furthermore, we can find a torus $T$, a smooth, connected unipotent group $U,$ and an Abelian variety $B$ over $\kappa\alg$ together with an exact sequence
$$0\to T\times_{\kappa\alg}U\to \mathscr{P}^0_{\kappa\alg}\to B\to 0$$
(see \cite{Con}, Theorem 1.1 and \cite{SGA3}, Exposé XVII, Théorème 7.2.1 b)).\\
$(i)\Rightarrow (v):$ Let $\ell$ be a prime number invertible in $\Og_K$ and $n\in \N.$ By Proposition \ref{decompprop}, $\mathscr{P}[\ell^n]$ can be written as a disjoint union 
$$\mathscr{P}[\ell^n]=(\mathscr{P}[\ell^n])^{\mathrm{f}}\sqcup(\mathscr{P}[\ell^n])^{\eta},$$ where $(\mathscr{P}[\ell^n])^{\mathrm{f}}$ is finite over $\Og_K$ and $(\mathscr{P}[\ell^n])^{\eta}$ has empty special fibre. This follows because $\Og_K$ is Henselian and $\mathscr{P}[\ell^n]$ is quasi-finite étale over $\Og_K.$ Because $P$ has good reduction, we must have 
$$\ord \mathscr{P}[\ell^n]_K\leq\ord \mathscr{P}[\ell^n]_\kappa;$$ the other inequality follows anyway because $\mathscr{P}[\ell^n]$ is quasi-finite étale over $\Og_K.$ This forces $(\mathscr{P}[\ell^n])^{\eta}$ to be empty, which implies that $\mathscr{P}[\ell^n]$ is finite over $\Og_K.$ Since $\Og_K$ is strictly Henselian, $\mathscr{P}[\ell^n]$ must (as a scheme) be a finite disjoint union of copies of $\Spec \Og_K,$ so the Galois action on $P[\ell^n](K\sep)$ is trivial. This triviality carries over to the limit. \\
$(v) \Rightarrow (iv)$ is obvious. \\
$(iv) \Rightarrow (i)$ Let $\ell$ be a prime number satisfying the condition of $(iv).$ By Proposition \ref{deltajumpprop}, we only have to exclude the case $\delta(P)>0.$ Using the fact that the map $\mathscr{P}_\kappa[\ell^n](\kappa)\to \mathscr{P}_\kappa[\ell^n](\kappa\alg)$ is an isomorphism, we can use the two exact sequences mentioned at the beginning of this proof to show that 
$$\ell^{n\cdot 2\alpha(P)}\leq \ord \mathscr{P}_\kappa[\ell^n](\kappa)\leq \ord \Phi \cdot \ell^{n(t(\mathscr{P}_\kappa)+2\alpha(\mathscr{P}_\kappa))}.$$ The first inequality follows from the fact that the Galois representation on $P[\ell^n](K\sep)$ is trivial by our assumption in $(iv)$, which means that $P[\ell^n]$ is a constant group scheme over $K.$ Therefore it has a finite étale model over $\Og_K,$ which admits a closed embedding into $\mathscr{P}$ by the universal property of the Néron model. The inequality above can be rearranged as
$$1\leq \ord\Phi\cdot \ell^{n(t(\mathscr{P}_\kappa)-2\delta(P))}.$$ In order for this to be valid for all $n,$ we must have $t(\mathscr{P}_{\kappa})\geq 2\delta(P).$ Since we also have $t(\mathscr{P}_{\kappa})\leq \delta(P)$ by Proposition \ref{deltajumpprop}, this forces $\delta(P)=0.$ \\
Because the Galois representations $T_{\ell}(P),$ $T_{\ell}(A),$ and $T_{\ell}(\Res_{L/K}A)$ are canonically identified, we can use the same arguments to show the implications $(ii)\Rightarrow (v)\Rightarrow(iv)\Rightarrow(ii)$ and $(iii)\Rightarrow (v)\Rightarrow(iv)\Rightarrow(iii).$ This concludes the proof. 
\end{proof}\\
$\mathbf{Remark.}$ In analogy with the case of Abelian varieties, one might have guessed that the correct definition for \it good reduction \rm of pseudo-Abelian varieties should be the requirement that the identity component of the special fibre of the Néron model be itself pseudo-Abelian. There are two reasons why this is not the case. Firstly, with this definition, the analogue of the Néron-Ogg-Shafarevich criterion we proved above would not hold, and secondly, this alternative definition would not be equivalent to the usual definition for Abelian varieties. We shall now give an example exhibiting both of those phenomena: Let $R$ be an excellent strictly Henselian discrete valuation ring with non-perfect residue field $\kappa$ and uniformizer $\pi.$ Let $a\in R$ be an element whose image $\overline{a}$ in the residue field $\kappa$ of $R$ is not a $p$-th power, where $p=\mathrm{char}\, \kappa=\mathrm{char}\, R.$ Let $K$ be the field of fractions of $R$ and let $L:=K[X]/\langle X^p+\pi X-a\rangle.$ Then the extension $L/K$ is separable. However, if we let $S$ be the integral closure of $R$ in $L,$ (which is a discrete valuation ring because $R$ is Henselian), then the induced extension $\kappa\subseteq \kappa'$ of residue fields is purely inseparable. Indeed, we must have $[\kappa':\kappa]\leq [L:K]=p,$ but the image of $X$ in $\kappa'$ has degree $p$ over $\kappa,$ so this inequality is an equality and $\kappa'=\kappa(\overline{a}^{1/p})$. In particular, $S\otimes_R\kappa=\kappa'.$ Now let $\mathscr{E}\to \Spec S$ be an Abelian scheme over $S.$ Then $\mathscr{E}$ is the Néron model of its generic fibre, and hence so is $\mathscr{E}':=\Res_{S/R}\mathscr{E}$ (at this point we do have to use that $\mathscr{E}\to \Spec S$ is projective; see \cite{BLR}, Chapter 6.4, Theorem 1). Because $L/K$ is separable, the generic fibre of $\mathscr{E}'$ is proper. However, since the residue field extension is inseparable, the special fibre of $\mathscr{E}'$ is not proper (see \cite{CGP}, Example A.5.6). The special fibre is still a pseudo-Abelian variety since it is clearly smooth and connected (for the latter claim, see \cite{CGP}, Corollary A.5.9), and if $G$ is a smooth connected affine algebraic group over $\kappa,$ a morphism $G\to \mathscr{E}'_{\kappa}$ is the same a morphism $G_{\kappa'}\to \mathscr{E}_{\kappa'},$ and clearly all such morphisms vanish. \\
\\
If $P$ is a pseudo-Abelian variety over $K,$ there exists a unique exact sequence
$$0\to E\to P\to V\to 0,$$ where $E$ and $V$ denote an Abelian variety and a smooth connected unipotent group over $K,$ respectively. This follows from \cite{T}, Theorem 2.10. We have
\begin{proposition}
With the notation above, the dimension of $E$ is equal to the virtual Abelian rank of $P.$ Furthermore, $P$ has good reduction over $\Og_K$ if and only if so does $E.$ \label{Eprop}
\end{proposition}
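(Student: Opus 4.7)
The plan has two parts, corresponding to the two assertions.

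For the dimension statement $\dim E = \alpha(P)$, I would base-change the given sequence $0 \to E \to P \to V \to 0$ to $K\perf$ and compare it with Chevalley's decomposition $0 \to H \to P_{K\perf} \to A \to 0$, where $H$ is smooth connected affine and $A$ is Abelian of dimension $\alpha(P)$. The scheme-theoretic intersection $E_{K\perf} \cap H$, formed inside $P_{K\perf}$, is a closed subgroup scheme of both $E_{K\perf}$ (proper) and $H$ (affine), hence finite. It coincides with the kernel of each of the two compositions $E_{K\perf} \hookrightarrow P_{K\perf} \twoheadrightarrow A$ and $H \hookrightarrow P_{K\perf} \twoheadrightarrow V_{K\perf}$, so both have finite kernel. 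This yields $\dim E \leq \dim A$ and $\dim H \leq \dim V$. Combined with the identity $\dim E + \dim V = \dim P = \dim H + \dim A$, both inequalities must be equalities, and in particular $\dim E = \dim A = \alpha(P)$.

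For the statement on good reduction, I would reduce to the N\'eron-Ogg-Shafarevich criterion via the Tate module. Fix a prime number $\ell$ invertible in $\Og_K$. Since $V$ is smooth connected unipotent over the characteristic-$p$ field $K$, it is annihilated by some power of $p$; the argument of Lemma \ref{lisomlem} (choose $m$ with $m\ell^n \equiv 1 \bmod p^N$) then shows that $[\ell^n] \colon V \to V$ is an isomorphism for every $n$. Applying the snake lemma to multiplication by $\ell^n$ on $0 \to E \to P \to V \to 0$ produces a Galois-equivariant isomorphism $P[\ell^n] \xrightarrow{\sim} E[\ell^n]$; passing to the inverse limit gives $T_\ell(P) \cong T_\ell(E)$ as continuous $\Gal(K\sep/K)$-modules. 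By Theorem \ref{NOScritthhm}, $P$ has good reduction if and only if inertia acts trivially on $T_\ell(P)$; by the classical N\'eron-Ogg-Shafarevich criterion for Abelian varieties (e.g.\ \cite{BLR}, Chapter 7.4, Theorem 5), the same holds for $E$ and $T_\ell(E)$. Since these Tate modules are identified, the two conditions coincide.

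The substantive step is the dimension comparison in the first assertion; the second is essentially formal once Theorem \ref{NOScritthhm} is available. The one point that requires care is to treat $E_{K\perf} \cap H$ scheme-theoretically and to confirm that properness combined with affineness forces finiteness, so that the dimension inequalities are rigorous rather than merely set-theoretic.
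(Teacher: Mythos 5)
Your proposal is correct, and it splits into one part that differs from the paper and one that matches it. The paper proves both assertions in a single stroke: after a purely inseparable base change it identifies $T_\ell(E)\to T_\ell(P)\to T_\ell(A)$ as isomorphisms of Galois representations, then reads off the dimension claim from the ranks (using the earlier proposition that $T_\ell(P)$ is free of rank $2\alpha(P)$, while $T_\ell(E)$ has rank $2\dim E$) and the reduction claim from Theorem \ref{NOScritthhm} together with the classical criterion for $E$. Your treatment of the reduction statement is essentially this same argument, carried out directly over $K$ via the snake lemma applied to $0\to E\to P\to V\to 0$ and the vanishing of $V[\ell^n]$, which is fine (and marginally cleaner, since no base change is needed). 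Your dimension argument, however, is genuinely different: instead of $\ell$-adic ranks you compare the sequence $0\to E_{K\perf}\to P_{K\perf}\to V_{K\perf}\to 0$ with the Chevalley sequence $0\to H\to P_{K\perf}\to A\to 0$, note that $E_{K\perf}\cap H$ is proper and affine hence finite, and conclude $\dim E=\dim A=\alpha(P)$ by additivity of dimensions. This buys independence from the Tate-module machinery (in particular from the rank computation of $T_\ell(P)$), at the cost of a separate geometric argument; the paper's route is shorter given what has already been established. Both arguments are rigorous as stated; your care in taking the intersection scheme-theoretically and invoking ``proper $+$ affine $\Rightarrow$ finite'' is exactly the point that needs it.
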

\begin{proof}
Replace $K$ by a finite purely inseparable extension such that there exists an exact sequence $0\to U \to P\to A\to 0$ with $A$ an Abelian variety and $U$ a smooth connected unipotent group over $K.$ Choose a prime number $\ell$ invertible in $K.$ Then the maps $T_\ell(E)\to T_\ell(P)\to T_\ell(A)$ are isomorphisms of Galois representations, which proves both claims. 
\end{proof}

\subsection{Properties of pseudo-Abelian varieties with good reduction}
We shall see that the Néron model of a pseudo-Abelian variety $P$ behaves like the Néron model of an Abelian variety with good reduction in some important ways, but the behaviour can be very different in some other respects. As before, we let $\Og_K$ be an excellent discrete valuation ring with residue field $\kappa$ and field of fractions $K,$ both assumed to be of characteristic $p>0.$ Further, we let $P$ be a pseudo-Abelian variety over $K$. 

\begin{proposition}
Let $P$ be a pseudo-Abelian variety over $K$ with good reduction. Let $F/K$ be a finite separable extension. Let $S$ be the localization of the integral closure of $\Og_K$ in $F$ at a non-zero prime ideal. Then $P_F$ has good reduction over $S.$  \label{goodredpreseredprop}
\end{proposition}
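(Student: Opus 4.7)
The plan is to invoke the N\'eron-Ogg-Shafarevich criterion (Theorem \ref{NOScritthhm}), which translates good reduction into the unramifiedness of the Galois action on $T_\ell(P)$ for any prime $\ell$ invertible in $\Og_K$. First I would verify that $S$ satisfies the hypotheses of that theorem. Since $\Og_K$ is excellent and hence Japanese (Proposition \ref{japaneseprop}), the integral closure $\Og_F$ of $\Og_K$ in $F$ is finite over $\Og_K$; consequently $\Og_F$ is a Dedekind domain, and its localization $S$ at a non-zero prime is an excellent discrete valuation ring whose residue field is a finite extension of $\kappa$, hence of characteristic $p>0$.

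Next, I would fix an embedding of $F$ into a separable closure $K\sep$ of $K$, so that $K\sep$ also serves as a separable closure of $F$ and $T_\ell(P_F) = T_\ell(P)$ canonically, with the $\Gal(K\sep/F)$-action being the restriction of the $\Gal(K\sep/K)$-action. Choosing a prime of $K\sep$ lying above the maximal ideal of $S$ pins down inertia subgroups $I_F \subseteq \Gal(K\sep/F)$ and $I_K \subseteq \Gal(K\sep/K)$ in a compatible way. The main point requiring care is the comparison $I_F \subseteq I_K$; this holds because the compositum $K^{\mathrm{sh}} \cdot F$ inside $K\sep$ remains unramified over $F$ (base change of an unramified extension of local rings stays unramified), so $K^{\mathrm{sh}} \subseteq F^{\mathrm{sh}}$, and therefore any element of $\Gal(K\sep/F^{\mathrm{sh}})$ automatically fixes $K^{\mathrm{sh}}$.

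With this inclusion in hand, the conclusion is immediate: since $P$ has good reduction, Theorem \ref{NOScritthhm} applied over $\Og_K$ gives that $I_K$ acts trivially on $T_\ell(P)$; hence $I_F$ acts trivially on $T_\ell(P_F)$, and applying Theorem \ref{NOScritthhm} in the reverse direction over $S$ yields that $P_F$ has good reduction. I do not anticipate any substantial obstacle here: the only subtlety is that $F/K$ may itself be ramified, but since we only need unramifiedness over $K$ to force unramifiedness over $F$ (rather than the converse), ramification in $F/K$ does no harm in this direction.
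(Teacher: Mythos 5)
Your proof is correct and follows essentially the same route as the paper: both directions of Theorem \ref{NOScritthhm} combined with the observation that $T_\ell(P_F)$ is just $T_\ell(P)$ with the action restricted to $\Gal(K\sep/F)$. The only cosmetic difference is that the paper first reduces to a strictly Henselian base via \cite{BLR}, Chapter 7.2, Corollary 2 (so that the inertia group is the full Galois group and the restriction argument is immediate), whereas you keep the general base and compare the compatibly chosen inertia subgroups $I_F\subseteq I_K$ directly; both handle the ramification bookkeeping equally well.
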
 
\begin{proof}
We may assume without loss of generality that $\Og_K$ be strictly Henselian (\cite{BLR}, Chapter 7.2, Corollary 2). Then the Galois representation on $T_{\ell}(P)$ is trivial for all prime numbers $\ell\in \Og_K^\times$. The Galois representation $T_{\ell}(P_F)$ arises from that of $T_{\ell}(P)$ by restricting the action of $\Gal(K\sep/K)$ to the subgroup $\Gal(K\sep/F),$ which is therefore trivial as well. Hence the claim follows from Theorem \ref{NOScritthhm}, $(v)\Rightarrow (i).$ 
\end{proof}\\
$\mathbf{Remark.}$ In is not true in general that Néron models of pseudo-Abelian varieties with good reduction commute with faithfully flat base change (although this is always the case for Abelian varieties). An example can be constructed as follows: Let $\Og_K$ be a complete discrete valuation ring with algebraically closed residue field. Let $L/K$ be a finite non-trivial purely inseparable extension, and let $F$ be a finite non-trivial separable extension. Let $\mathscr{E}\to \Spec \Og_K$ be an elliptic curve with generic fibre $E.$ We can choose an isomorphism 
$$\psi\colon \Og_K\to \Lie \mathscr{E}$$ (see paragraph 1 of \cite{LLR} for an introduction to Lie algebras). Let $P:=\Res_{L/K}E_L.$ Then the Néron model $\mathscr{P}$ of $P$ is isomorphic to $\Res_{\Og_L/\Og_K} \mathscr{E}_{\Og_L}$ in a canonical way. Hence the Lie algebra of $\mathscr{P}$ is equal to ($\Lie \mathscr{E})\otimes_{\Og_K}\Og_L$ (viewed as a module over $\Og_K$). The Lie algebra of the Néron model $\mathscr{P}_F$ of $P_F$ is equal to $(\Lie \mathscr{E})\otimes_{\Og_K}\Og_{F\otimes_KL}$ (viewed as a module over $\Og_F$) by the same argument. We obtain a commutative diagram
$$\begin{CD}
(\Lie \mathscr{P})\otimes _{\Og_K}\Og_F@>>>\Lie \mathscr{P}_F\\
@VVV@VVV\\
(\Lie \mathscr{E})\otimes_{\Og_K}\Og_L\otimes_{\Og_K}\Og_F@>>>(\Lie \mathscr{E})\otimes_{\Og_K}\Og_{L\otimes_KF}\\
@V{\psi\otimes \mathrm{Id}}VV@VV{\psi\otimes\mathrm{Id}}V\\
\Og_L\otimes_{\Og_K}\Og_F@>>>\Og_{L\otimes_KF}.
\end{CD}$$
The vertical maps are all isomorphisms. However, the bottom horizontal map is not an isomorphism (indeed, choose uniformizers $\pi_F$ and $\pi_{L\otimes_KF}$ of $\Og_F$ and $\Og_{L\otimes_KF},$ respectively, and $\epsilon\in \Og_{L\otimes_KF}^\times$ such that $\pi_F=\epsilon\pi_{L\otimes_KF}^{[L:K]}.$ Then $\Og_L\otimes_{\Og_K}\Og_F=\Og_L[\pi_F]=\Og_L[\epsilon\pi_{L\otimes_KF}^{[L:K]}]\subsetneq \Og_L[\epsilon\pi_{L\otimes_KF}]=\Og_{L\otimes_KF}$; see \cite{Serre}, Chapitre 1, Proposition 18).\\
\\
If $A$ is an Abelian variety over $K,$ then a smooth separated model $\mathscr{A}\to \Spec\Og_K$ with the property that $\delta(\mathscr{A})=0$ is unique up to unique isomorphism, if it exists. This follows because the condition stated above implies that $\mathscr{A}\to \Spec \Og_K$ is proper, and hence the Néron model of its generic fibre. The remark above shows that this fails for pseudo-Abelian varieties. However, something only marginally weaker is true:
\begin{proposition}
Let $\Og_K$ be an excellent discrete valuation ring with field of fractions $K.$ Let $P$ be a pseudo-Abelian variety over $K$. Suppose that there exist a smooth separated model $\mathscr{P}'$ over $\Og_K$ of $P$ of finite type such that $\delta(\mathscr{P}')=0.$ Then $P$ has good reduction. 
\end{proposition}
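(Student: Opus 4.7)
The plan is to leverage Theorem \ref{NOScritthhm} in its strongest direction: I reduce to the case of $\Og_K$ strictly Henselian (using that $\alpha(\cdot)$ and $t(\cdot)$ are insensitive to base change and that N\'eron models together with defects are compatible with strict Henselization by \cite{BLR}, Chapter 7.2, Corollary 2), and then show that for any prime $\ell\in\Og_K^\times$ the Galois action on $T_\ell(P)$ is trivial. By Theorem \ref{NOScritthhm}, (iv)$\Rightarrow$(i), this suffices.

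In this strictly Henselian setting I fix $\ell\in\Og_K^\times$ and $n\geq 1$ and examine $\mathscr{P}'[\ell^n]$. Because $\mathscr{P}'$ is smooth of finite type over $\Og_K$ and $\ell$ is invertible, this is a quasi-finite \'etale $\Og_K$-group scheme. By Proposition \ref{decompprop} it decomposes as $(\mathscr{P}'[\ell^n])^{\mathrm{f}}\sqcup(\mathscr{P}'[\ell^n])^{\eta}$, so comparing fibre orders yields $\ord P[\ell^n]\geq\ord \mathscr{P}'_\kappa[\ell^n]$, with equality if and only if $(\mathscr{P}'[\ell^n])^{\eta}$ is empty. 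Under such equality $\mathscr{P}'[\ell^n]$ would be finite \'etale over the strictly Henselian $\Og_K$, hence a disjoint union of copies of $\Spec\Og_K$, so the Galois action on its generic fibre would be trivial; passing to the limit in $n$ would then produce the desired triviality of the inertia action on $T_\ell(P)$.

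The whole argument thus reduces to proving the reverse inequality $\ord \mathscr{P}'_\kappa[\ell^n]\geq \ell^{2n\alpha(P)}=\ord P[\ell^n]$, where the last equality comes from the rank formula for $T_\ell(P)$ established earlier. Applying Chevalley's theorem over $\kappa\alg$ to $\mathscr{P}'^{0}_{\kappa\alg}$ yields
$$0\to T'\times_{\kappa\alg}U'\to \mathscr{P}'^{0}_{\kappa\alg}\to B'\to 0$$
with $\dim T'=t(\mathscr{P}'_\kappa)$ and $\dim B'=\alpha(\mathscr{P}'_\kappa)$. Lemma \ref{lisomlem} makes $[\ell^n]$ bijective on $U'$, and the usual $\ell$-divisibility of the torus $T'$ and the Abelian variety $B'$ over $\kappa\alg$ keeps the sequence exact after passage to $\ell^n$-torsion, giving
$$\ord \mathscr{P}'_\kappa[\ell^n]\geq\ord \mathscr{P}'^{0}_{\kappa\alg}[\ell^n]=\ell^{n(t(\mathscr{P}'_\kappa)+2\alpha(\mathscr{P}'_\kappa))}.$$
The hypothesis $\delta(\mathscr{P}')=0$ is precisely $\alpha(\mathscr{P}'_\kappa)=\alpha(P)$, which forces this lower bound to be at least $\ell^{2n\alpha(P)}$ and closes the argument.

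The main obstacle is conceptual rather than technical: one has to recognize that, although $\mathscr{P}'$ is neither the N\'eron model nor equipped with any direct comparison with it, its mere smoothness is enough to make $\mathscr{P}'[\ell^n]$ \'etale, so the Chevalley-theoretic order count from the proof of (iv)$\Rightarrow$(i) of Theorem \ref{NOScritthhm} can be run in reverse, with the numerical hypothesis $\delta(\mathscr{P}')=0$ standing in for the representation-theoretic input. No new ingredient beyond that theorem is needed.
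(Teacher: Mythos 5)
Your proof is correct and takes essentially the same route as the paper: reduce to the strictly Henselian case, use the decomposition of the quasi-finite \'etale scheme $\mathscr{P}'[\ell^n]$ into its finite part and the part with empty special fibre, show that $\delta(\mathscr{P}')=0$ forces the latter to vanish, conclude that the inertia action on $T_\ell(P)$ is trivial, and invoke Theorem \ref{NOScritthhm}. The only difference is that you make explicit, via the Chevalley decomposition of $\mathscr{P}'^0_{\kappa\alg}$ and the resulting count $\ord\mathscr{P}'_\kappa[\ell^n]\geq \ell^{n(t(\mathscr{P}'_\kappa)+2\alpha(\mathscr{P}'_\kappa))}$, the order comparison that the paper leaves implicit in the sentence ``our assumption on the virtual Abelian ranks implies that $\mathscr{P}'[\ell^n]^{\eta}=\emptyset$.''
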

\begin{proof}
We may assume without loss of generality that $\Og_K$ be strictly Henselian (\cite{BLR}, Chapter 7.2, Corollary 2). It suffices to show that, for some prime number $\ell$ invertible in $\Og_K,$ the finite étale $K$-group schemes $P[\ell^n]$ admit finite étale models over $\Og_K.$ Indeed, this will imply that the group schemes $P[\ell^n]$ are constant, so the Galois action on $T_\ell(P)$ is trivial, which implies our result by Theorem \ref{NOScritthhm}. Hence it is enough to prove that the $\Og_K$-group schemes $\mathscr{P}'[\ell^n]$ are finite over $\Og_K$ for all $n\geq 0.$ The schemes $\mathscr{P}'[\ell^n]$ are clearly quasi-finite étale over $\Og_K,$ so we have a decomposition 
$$\mathscr{P}'[\ell^n]=\mathscr{P}'[\ell^n]^{\mathrm{f}}\sqcup \mathscr{P}'[\ell^n]^{\eta},$$ where $\mathscr{P}'[\ell^n]^{\eta}$ has empty special fibre and $\mathscr{P}'[\ell^n]^{\mathrm{f}}$ is finite over $\Og_K$ (see Proposition \ref{decompprop}). Our assumption on the virtual Abelian ranks implies that $\mathscr{P}'[\ell^n]^{\eta}=\emptyset,$ so that our claim follows. 
\end{proof}
\subsection{The group of connected components}
For an Abelian variety $A$ over a discretely valued field $K$ with separably closed residue field $\kappa$, Grothendieck (\cite{SGA7 IX}, Paragraph 11) proved that the formula 
$$\Phi(\ell)(\kappa)=H^1(I,T_{\ell}(A))_{\mathrm{tors}},$$ where $I$ denotes an inertia group of $K$ and $\ell$ a prime number invertible in $\kappa.$ The cohomology refers to continuous Galois cohomology. We shall briefly recall the argument in order to show that it holds for pseudo-Abelian varieties $P\to \Spec K.$ First note that we may assume without loss of generality that $\Og_K$ be strictly Henselian, so that $I:=I_K=\Gal(K\sep/K).$ Suppose $\mathscr{P}$ denotes the Néron model of $P$ over $\Og_K.$ We have 
$$T_{\ell}(P)^{I}=\varprojlim \mathscr{P}^0[\ell^n](\Og_K).$$ For each $n>0,$ we have an exact sequence
$$0\to \mathscr{P}^0[\ell^n](\Og_K)\to \mathscr{P}[\ell^n](\Og_K)\to \Phi[\ell^n](\kappa)\to 0,$$ which is the same as an exact sequence
$0\to T_\ell(P)^{I}\otimes_{\Z_\ell}\Z/\ell^n\Z \to (T_\ell(P)\otimes_{\Z_\ell}\Z/\ell^n\Z)^{I}\to \Phi[\ell^n](\kappa)\to 0.$ Taking inductive limits, we obtain
an exact sequence $$0\to T_{\ell}(P)^{I}\otimes_{\Z_{\ell}} \Q_\ell/\Z_\ell\to (T_{\ell}(P)\otimes_{\Z_\ell}\Q_\ell/\Z_{\ell})^{I}\to \Phi(\ell)(\kappa)\to 0.$$ Now consider the exact sequence of $I$-representations
$$0\to T_\ell(P)\to T_\ell(P)\otimes_{\Z\ell}\Q_\ell\to T_\ell(P)\otimes_{\Z_\ell}\Q_\ell/\Z_\ell\to 0.$$ By considering the long exact cohomology sequence, we can construct a canonical exact sequence 
$$0\to \phi(\ell)(\kappa)\to H^1(I, T_\ell(P))\to H^1(I, T_\ell(P)\otimes_{\Z_\ell} \Q_\ell)=H^1(I, T_\ell(P))\otimes_{\Z_\ell} \Q_\ell,$$ which implies Grothendieck's formula. Now suppose only that $\Og_K$ be excellent (without assuming that $\kappa$ be separably closed).
By \cite{T}, Theorem 2.1, there is a unique exact sequence 
$$0\to E\to P\to V\to 0,$$ where $E$ is an Abelian variety and $V$ a smooth connected commutative unipotent algebraic group over $K,$ respectively. Because $V[\ell^n]=0$ for all $n\geq 0,$ we see immediately that the induced morphism $T_\ell(E)\to T_\ell(P)$ is an isomorphism. Hence we obtain
\begin{proposition}
Let $P$ be a pseudo-Abelian variety over $K$ and let $E$ be the maximal Abelian subvariety of $P$ as above. Let $\Phi_E$ and $\Phi_P$ be the group schemes of connected components of the Néron models of $E$ and $P,$ respectively. Then, for any prime number $\ell$ invertible in $\Og_K,$ the canonical map 
$$\Phi_E(\ell)\to \Phi_P(\ell)$$ is an isomorphism. 
\end{proposition}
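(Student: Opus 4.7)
The plan is to deduce this proposition directly from the version of Grothendieck's formula
$$\Phi(\ell)(\kappa) \cong H^1(I, T_\ell(-))_{\mathrm{tors}}$$
just derived in the preceding discussion, combined with the identification $T_\ell(E) \cong T_\ell(P)$ of Galois modules noted immediately before the statement.

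First, I would reduce to the case where $\Og_K$ is strictly Henselian, which is harmless because the formation of the étale $\kappa$-group scheme of connected components commutes with strict Henselization of the base (the Néron model does, and the identity component is preserved), and the inertia group does not change. In this situation $I = \Gal(K\sep/K)$. Next, applying Grothendieck's formula to both the Abelian variety $E$ and the pseudo-Abelian variety $P$, I obtain canonical isomorphisms
$$\Phi_E(\ell)(\kappa) \cong H^1(I, T_\ell(E))_{\mathrm{tors}} \quad\text{and}\quad \Phi_P(\ell)(\kappa) \cong H^1(I, T_\ell(P))_{\mathrm{tors}}.$$
The morphism $T_\ell(E) \to T_\ell(P)$ of $I$-modules is an isomorphism: indeed, the snake lemma applied to multiplication by $\ell^n$ on the exact sequence $0 \to E \to P \to V \to 0$, together with Lemma \ref{lisomlem} (which guarantees $[\ell^n]$ is an isomorphism on the unipotent group $V$), shows that $E[\ell^n] \to P[\ell^n]$ is an isomorphism, and passing to the inverse limit concludes. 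Consequently the induced map on $H^1(I,-)_{\mathrm{tors}}$ is an isomorphism as well.

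To finish, I need to identify the resulting cohomological isomorphism with the canonical map $\Phi_E(\ell) \to \Phi_P(\ell)$. The latter is induced from the morphism of Néron models produced by the universal property from the inclusion $E \hookrightarrow P$. The derivation of Grothendieck's formula above only uses the short exact sequences $0 \to \mathscr{P}^0[\ell^n] \to \mathscr{P}[\ell^n] \to \Phi[\ell^n] \to 0$ and the identification $\varprojlim \mathscr{P}^0[\ell^n](\Og_K) = T_\ell(P)^{I}$, both of which are functorial in morphisms of smooth commutative $\Og_K$-group schemes. Unwinding this functoriality for $E \to P$ yields the required commutative diagram and completes the proof.

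The only real obstacle is this last naturality check, since the preceding discussion formulates Grothendieck's construction object by object rather than as a natural transformation. However, each step (the snake lemma giving the short exact sequence involving $\Phi[\ell^n]$, the inductive limit, and the connecting map in the long exact $I$-cohomology sequence associated to $0 \to T_\ell(-) \to T_\ell(-)\otimes_{\Z_\ell}\Q_\ell \to T_\ell(-)\otimes_{\Z_\ell}\Q_\ell/\Z_\ell \to 0$) is manifestly functorial, so the verification is a routine diagram chase rather than a genuine difficulty.
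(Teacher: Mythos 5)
Your proposal is correct and follows essentially the same route as the paper: reduce to the strictly Henselian case, apply the Grothendieck-type formula $\Phi(\ell)(\kappa)\cong H^1(I,T_\ell(-))_{\mathrm{tors}}$ to both $E$ and $P$, use that $T_\ell(E)\to T_\ell(P)$ is an isomorphism because the unipotent quotient $V$ has trivial $\ell^n$-torsion, and conclude by the naturality of the construction (the paper records exactly this as a commutative square, and handles the non-strictly-Henselian case by Galois equivariance and descent, which is equivalent to your reduction).
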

\begin{proof}
First assume that $\Og_K$ be strictly Henselian. Then we have a commutative diagram
$$\begin{CD}
\Phi_E(\ell)(\kappa)@>>>\Phi_P(\ell)(\kappa)\\
@VVV@VVV\\
H^1(I, T_\ell(E))_{\mathrm{tors}}@>>>H^1(I, T_\ell(P))_{\mathrm{tors}},
\end{CD}$$
where the horizontal maps are the obvious ones and the vertical maps are those constructed in the discussion above. The vertical arrows are isomorphisms by construction and the bottom horizontal arrow is an isomorphism because so is $T_\ell(E)\to T_\ell(P).$ Hence the top horizontal arrow is an isomorphism. If $\Og_K$ is not necessarily strictly Henselian, we still obtain an isomorphism $\Phi_E(\kappa\sep)\to \Phi_P(\kappa\sep)$ as above, and the naturality of the Néron model shows that this morphism is Galois equivariant. This means that the isomorphism constructed above descends to an isomorphism of étale $\kappa$-group schemes $\Phi_E(\ell)\to \Phi_P(\ell).$
\end{proof}\\
As a consequence, we obtain the following 
\begin{corollary}
Let $P$ be a pseudo-Abelian variety over $K$ with good reduction. Then $\ord \Phi_P$ is a power of $p=\mathrm{char}\, \kappa=\mathrm{char}\, K.$ 
\end{corollary}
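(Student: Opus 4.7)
The plan is to combine the preceding proposition with the standard fact that an Abelian variety with good reduction has trivial component group, via the reduction from $P$ to its maximal Abelian subvariety $E$.

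First I would apply Proposition \ref{Eprop}: since $P$ has good reduction over $\Og_K$, so does the maximal Abelian subvariety $E$ of $P$. Because $E$ is an honest Abelian variety, good reduction in our sense coincides with the classical notion (its defect vanishes precisely when the N\'eron model is proper, i.e., an Abelian scheme over $\Og_K$). Therefore the special fibre of the N\'eron model of $E$ is itself an Abelian variety over $\kappa$, in particular geometrically connected, so the component group scheme $\Phi_E$ is trivial.

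Next I would invoke the previous Proposition: for every prime number $\ell$ invertible in $\Og_K$, the canonical map $\Phi_E(\ell) \to \Phi_P(\ell)$ is an isomorphism. Combined with $\Phi_E = 0$, this yields $\Phi_P(\ell) = 0$ for every such $\ell$, i.e., the prime-to-$p$ part of the finite \'etale $\kappa$-group scheme $\Phi_P$ vanishes.

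Finally I would conclude by decomposing $\Phi_P$ (a finite \'etale commutative group scheme over $\kappa$) according to its $\ell$-primary parts; since the $\ell$-primary part vanishes for every $\ell \neq p$ invertible in $\Og_K$ (i.e.\ every prime distinct from $p$, as $p = \mathrm{char}\,\kappa = \mathrm{char}\,K$), the order $\ord \Phi_P$ can only be divisible by $p$, and hence is a power of $p$. There is essentially no obstacle here beyond recalling that good reduction for an honest Abelian variety implies a trivial component group; the heavy lifting has already been done in Proposition \ref{Eprop} and the preceding Proposition.
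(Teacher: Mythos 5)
Your proposal is correct and follows essentially the same route as the paper: reduce to the maximal Abelian subvariety $E$ (the paper does this via the isomorphism $T_\ell(E)\to T_\ell(P)$ and Theorem \ref{NOScritthhm}, which is exactly the content of Proposition \ref{Eprop} that you cite), note $\Phi_E=0$ since $E$ is an Abelian variety with good reduction, and transfer this to the prime-to-$p$ part of $\Phi_P$ via the preceding Proposition.
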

\begin{proof}
Write $P$ as an extension of a smooth connected commutative unipotent group $U$ by an Abelian variety $E$ as above. Since the map $T_\ell(E)\to T_\ell(P)$ is an isomorphism, Theorem \ref{NOScritthhm} implies that $E$ has good reduction. Since $E$ is an Abelian variety, this means that $\Phi_E=0.$ By the Proposition above, $\Phi_P(\ell)=0$ for all $\ell\not=p.$
\end{proof}\\
It is not known whether there exist pseudo-Abelian varieties $P$ over $K$ with good reduction such that $\Phi_P\not=0.$ We shall consider some examples of pseudo-Abelian varieties below; in each case, we shall see that the component group is trivial. Let us begin with 
\begin{proposition}
Let $P$ be a pseudo-Abelian variety over the field $K$ which is isomorphic to $\Res_{L/K} A$ for some Abelian variety $A$ over a finite purely inseparable extension $L$ of $K$ which has good reduction over the integral closure $\Og_L$ of $\Og_K$ in $L.$  Then $\Phi_P=0.$ 
\end{proposition}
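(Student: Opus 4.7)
The plan is to exhibit an explicit N\'eron model of $P$ whose special fibre is already connected, so that $\Phi_P = \mathscr{P}_{\kappa}/\mathscr{P}^{0}_{\kappa} = 0$ follows directly. The natural candidate is $\mathscr{P} := \Res_{\Og_L/\Og_K}\mathscr{A}$, where $\mathscr{A}\to\Spec\Og_L$ denotes the (Abelian scheme) good reduction model of $A$.

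First I would check that $\mathscr{P}$ is representable as a smooth separated $\Og_K$-group scheme. The ring extension $\Og_K\subseteq\Og_L$ is finite by Proposition \ref{japaneseprop} and flat (as $\Og_L$ is a torsion-free module over the discrete valuation ring $\Og_K$), hence finite and locally free; moreover, $\Spec\Og_L\to\Spec\Og_K$ is a universal homeomorphism by Lemma \ref{univlem}(i). Proposition \ref{universalhomeoprop} therefore yields the representability of $\mathscr{P}$, with smoothness and separatedness over $\Og_K$ following from the corresponding properties of $\mathscr{A}$ over $\Og_L$ by the standard argument for Weil restriction along a finite locally free morphism.

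Next I would verify the N\'eron universal property. Given a smooth $\Og_K$-scheme $T$ and a $K$-morphism $\varphi\colon T_K\to P_K = \Res_{L/K}A$, the Weil restriction adjunction produces an $L$-morphism $\varphi'\colon (T_{\Og_L})_L\to A$. Since $T_{\Og_L}\to\Spec\Og_L$ is smooth, the N\'eron property of $\mathscr{A}$ furnishes a unique extension $T_{\Og_L}\to\mathscr{A}$ of $\varphi'$, which translates back through the adjunction $\Hom_{\Og_K}(T,\mathscr{P}) = \Hom_{\Og_L}(T_{\Og_L},\mathscr{A})$ into the required unique extension $T\to\mathscr{P}$ of $\varphi$. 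Hence $\mathscr{P}$ is indeed the N\'eron model of $P$.

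It remains to analyse the special fibre. Weil restriction commutes with base change along $\Spec\kappa\to\Spec\Og_K$, so $\mathscr{P}_\kappa = \Res_{\kappa'/\kappa}\mathscr{A}_{\kappa'}$, where $\kappa'$ is the residue field of $\Og_L$. The extension $\kappa\subseteq\kappa'$ is purely inseparable (Lemma \ref{univlem}(i)), and $\mathscr{A}_{\kappa'}$ is an Abelian variety by good reduction, hence smooth and geometrically connected. By \cite{CGP}, Corollary A.5.9, the Weil restriction of a smooth connected scheme along a finite purely inseparable extension of fields is again connected, so $\mathscr{P}_\kappa$ is connected and $\Phi_P = 0$. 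The only genuinely delicate step is the representability/universal-property identification of $\mathscr{P}$, which is handled by the ingredients prepared in Section 2 (in particular Proposition \ref{universalhomeoprop}, whose purpose is precisely to avoid quasi-projectivity hypotheses in the Weil restriction along the non-\'etale but radicial morphism $\Spec\Og_L\to\Spec\Og_K$).
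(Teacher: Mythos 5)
Your overall strategy coincides with the paper's: represent $\mathscr{P}:=\Res_{\Og_L/\Og_K}\mathscr{A}$ using Proposition \ref{universalhomeoprop} together with Lemma \ref{univlem}, identify it with the N\'eron model of $P$ via the Weil restriction adjunction applied to smooth test schemes, and conclude by showing the special fibre is connected. The representability step and the verification of the N\'eron mapping property are correct and are exactly what the paper does.

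There is, however, a genuine error in your identification of the special fibre. Weil restriction does commute with the base change $\Spec \kappa\to\Spec\Og_K$, but what this yields is
$$\mathscr{P}_\kappa\;\cong\;\Res_{(\Og_L\otimes_{\Og_K}\kappa)/\kappa}\bigl(\mathscr{A}\times_{\Og_L}\Spec(\Og_L\otimes_{\Og_K}\kappa)\bigr),$$
and $\Og_L\otimes_{\Og_K}\kappa=\Og_L/\pi\Og_L$ is a finite \emph{local Artinian} $\kappa$-algebra, equal to the residue field $\kappa'$ only when $e_{L/K}=1$. For purely inseparable $L/K$ the ramified case is typical: for $L=K(\pi^{1/p})$ one has $\kappa'=\kappa$ but $\Og_L\otimes_{\Og_K}\kappa\cong\kappa[t]/\langle t^p\rangle$, so your formula $\mathscr{P}_\kappa=\Res_{\kappa'/\kappa}\mathscr{A}_{\kappa'}$ would make the special fibre an Abelian variety, which is false in general; indeed the paper later exploits precisely the non-reducedness of $\Og_L\otimes_{\Og_K}\kappa$ to show that such special fibres have nontrivial unipotent radical. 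As a result, the connectedness statement you invoke (Weil restriction along a purely inseparable extension of \emph{fields}) does not apply as stated. The repair is immediate and is the paper's argument: $\Og_L\otimes_{\Og_K}\kappa$ is a finite local $\kappa$-algebra whose residue field is purely inseparable over $\kappa$, the scheme $\mathscr{A}\times_{\Og_L}\Spec(\Og_L\otimes_{\Og_K}\kappa)$ is smooth with connected fibre, and \cite{CGP}, Proposition A.5.9 then gives connectedness of its Weil restriction to $\kappa$; hence $\mathscr{P}_\kappa$ is connected and $\Phi_P=0$.
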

\begin{proof}
Let $\mathscr{A}$ be the Néron model of $A$ over $\Og_L$ Then $\Res_{\Og_L/\Og_K}\mathscr{A}$ (which is representable by Proposition \ref{universalhomeoprop} and Lemma \ref{univlem}) is the Néron model of $P$ over $\Og_K,$ as can be seen easily by considering the universal property. Hence the special fibre of the Néron model of $P$ is isomorphic to $\Res_{\Og_L\otimes_{\Og_K}\kappa/\kappa} (\mathscr{A}\times_{\Og_L}\Spec {\Og_L}\otimes_{\Og_K}\kappa),$ which is connected by \cite{CGP}, Proposition A.5.9.
\end{proof}\\
Now recall the construction of another class of pseudo-Abelian varieties over $K$ given in \cite{T}, Lemma 8.1: Let $L$ be a purely inseparable extension of $K $ of degree $p$ with ring of integers $\Og_L$, let $U:=\Res_{L/K}\Gm/\Gm$ (which is a smooth connected commutative algebraic group over $K$ of exponent $p$), and let $E$ be an elliptic curve with the property that $E[p]\cong {\boldsymbol{\mu}_{p}}\times \Z/p\Z$ and such that $E$ can be defined over $K^p.$ Note that $R:=\Res_{L/K}\Gm$ is pseudo-reductive, hence \cite{T}, Lemma 8.1 applies.
If $\mathscr{G}_{\mathrm{m}}$ and $\mathscr{R}$ denote the Néron lft-models of $\Gm$ and $R,$ respectively, one convinces oneself easily that $\mathscr{N}:=\Res_{\Og_L/\Og_K}\mathscr{G}_{\mathrm{m}}/\mathscr{G}_{\mathrm{m}}$ is the Néron model of $U$ over $\Og_K$ (indeed, $\mathscr{N}$ is clearly smooth and of finite type, so the claim follows from Hilbert's theorem 90 together with \cite{BLR}, Chapter 7.1, Theorem 1). We also see that $\Phi_U\cong \Z/e_{L/K}\Z$ as a group scheme, where $e_{L/K}$ denotes the ramification index of the extension $L/K.$ This number is an element of the set $\{1,p\}.$ 
Recall that the extension $0\to \Gm\to R\to U\to 0$ comes from an extension $0\to {\boldsymbol{\mu}_{p}}\to H\to U\to 0$ in a unique way, and the push-out $P$ of the maps ${\boldsymbol{\mu}_{p}}\to H$ and ${\boldsymbol{\mu}_{p}}\to E$ is a pseudo-Abelian variety over $K$ (see \cite{T}, Lemma 8.1 for both those claims). The remainder of this paragraph will be dedicated to proving the following
\begin{proposition}
Suppose $P$ arise from the construction recalled above. Suppose further that $E$ have good reduction over $\Og_K,$ and that the Néron model $\mathscr{E}$ of $E$ have the property that there exists an isomorphism 
$$\mathscr{E}[p]\cong {\boldsymbol{\mu}_{p}}\times\Z/p\Z$$ over $\Og_K.$ Then the Néron model $\mathscr{P}$ of $P$ over $\Og_K$ has the following properties: \\
(i) The canonical map $\mathscr{E}\to \mathscr{P}$ is a closed immersion, and \\
(ii) The morphism $\mathscr{P}\to \mathscr{N}$ factors through $\mathscr{N}^0$ and the induced map $\mathscr{P}/\mathscr{E}\to \mathscr{N}^0$ is an isomorphism. In particular, we have $\Phi_P=0.$ \label{exampleprop}
\end{proposition}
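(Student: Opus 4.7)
The plan is to construct an explicit candidate N\'eron model $\mathscr{P}'$ as an fppf pushout, check it is a smooth $\Og_K$-group scheme of finite type fitting into a short exact sequence $0 \to \mathscr{E} \to \mathscr{P}' \to \mathscr{N}^0 \to 0$, and then identify $\mathscr{P}'$ with the N\'eron model $\mathscr{P}$ via the universal property. Assertions (i) and (ii) will then follow from the construction.

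Build $\mathscr{P}'$ in two stages. First, set $\mathscr{H} := \ker([p] \colon \mathscr{R}^0 \to \mathscr{R}^0)$, where $\mathscr{R}^0$ is the identity component of the N\'eron lft-model $\mathscr{R} = \Res_{\Og_L/\Og_K}\mathscr{G}_{\mathrm{m}}$. Since $U$ has exponent $p$ and $\mathscr{N}^0 = \mathscr{R}^0/\mathscr{G}_{\mathrm{m}}$ is $\Og_K$-flat, $[p]$ vanishes on $\mathscr{N}^0$; applying the snake lemma for $[p]$ to the exact sequence $0 \to \mathscr{G}_{\mathrm{m}} \to \mathscr{R}^0 \to \mathscr{N}^0 \to 0$ yields an exact sequence $0 \to \boldsymbol{\mu}_p \to \mathscr{H} \to \mathscr{N}^0 \to 0$ extending the generic-fibre sequence $0 \to \boldsymbol{\mu}_p \to H \to U \to 0$. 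Second, define $\mathscr{P}' := (\mathscr{H} \times_{\Og_K} \mathscr{E})/\boldsymbol{\mu}_p$ as the fppf quotient by the antidiagonal $\boldsymbol{\mu}_p$-action, using the embedding $\boldsymbol{\mu}_p \hookrightarrow \mathscr{H}$ just constructed together with $\boldsymbol{\mu}_p \hookrightarrow \mathscr{E}$ coming from the hypothesized splitting $\mathscr{E}[p] \cong \boldsymbol{\mu}_p \times \Z/p\Z$. A second snake-lemma argument identifies $\mathscr{P}'$ as an extension $0 \to \mathscr{E} \to \mathscr{P}' \to \mathscr{N}^0 \to 0$, realizing it as an $\mathscr{E}$-torsor over $\mathscr{N}^0$; since $\mathscr{E}$ and $\mathscr{N}^0$ are smooth, $\mathscr{P}'$ is smooth, separated, of finite type over $\Og_K$, with generic fibre $(H \times E)/\boldsymbol{\mu}_p = P$.

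The central step is to verify the N\'eron universal property for $\mathscr{P}'$, so that $\mathscr{P}' = \mathscr{P}$. Given a smooth $T \to \Spec \Og_K$ (reducing to the case where every component of $T$ is dominant over $\Og_K$) and $\varphi \colon T_K \to P$, pick an fppf cover $T' \to T$ over which $\varphi_{T'_K}$ lifts to $(\tilde h, \tilde e) \colon T'_K \to H \times E$. Extend $\tilde e$ to $T' \to \mathscr{E}$ by the N\'eron property of the abelian scheme $\mathscr{E}$, and extend $\tilde h \colon T'_K \to H \hookrightarrow R$ to $T' \to \mathscr{R}$ by the N\'eron lft-property. This extension is automatically $p$-torsion (by flatness of $T'$ over $\Og_K$ and separatedness of $\mathscr{R}$), and on each connected component $C$ of $T'$ (necessarily dominant, as $T'\to T$ is faithfully flat), the composition $C \to \mathscr{R} \twoheadrightarrow \mathscr{R}/\mathscr{R}^0$ takes values in a discrete \'etale group scheme and vanishes on the generic fibre (as $H \subset R = R^0$), so it vanishes on all of $C$; thus the extension lands in $\mathscr{R}^0[p] = \mathscr{H}$. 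Composing $T' \to \mathscr{H} \times_{\Og_K} \mathscr{E}$ with the quotient to $\mathscr{P}'$ and descending by fppf descent (using the sheaf property of $\mathscr{P}'$ and the uniqueness of N\'eron extensions) produces the required morphism $T \to \mathscr{P}'$.

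With $\mathscr{P}' = \mathscr{P}$ established, (i) is immediate: $\mathscr{E} \hookrightarrow \mathscr{P}'$ is a closed immersion as the kernel of the smooth surjection $\mathscr{P}' \twoheadrightarrow \mathscr{N}^0$. Assertion (ii) is the exact sequence $0 \to \mathscr{E} \to \mathscr{P}' \to \mathscr{N}^0 \to 0$ itself, which realizes $\mathscr{P} \to \mathscr{N}$ as factoring through $\mathscr{N}^0$ with quotient $\mathscr{P}/\mathscr{E} \cong \mathscr{N}^0$. Finally, $\mathscr{P}_\kappa = \mathscr{P}'_\kappa$ is an extension of the connected $\mathscr{N}^0_\kappa$ by the connected $\mathscr{E}_\kappa$ and is therefore itself connected, giving $\Phi_P = 0$. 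The hardest point in the argument is the N\'eron-property verification, where one must carefully track the components of the extension of $\tilde h$ to $\mathscr{R}$; the key observation that rescues the construction is the connectedness of the components of $T'$ combined with the discreteness of $\mathscr{R}/\mathscr{R}^0 = \Z/e_{L/K}\Z$.
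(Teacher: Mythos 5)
Your construction of the candidate model is fine and is essentially a concrete version of what the paper does: your $\mathscr{H}=\mathscr{R}^0[p]$ is the same extension of $\mathscr{N}^0$ by $\boldsymbol{\mu}_{p}$ that the paper produces via the isomorphism $\Ext^1(\mathscr{N}^0,\boldsymbol{\mu}_{p})\to\Ext^1(\mathscr{N}^0,\Gm)$, and your pushout $\mathscr{P}'=(\mathscr{H}\times_{\Og_K}\mathscr{E})/\boldsymbol{\mu}_{p}$ is the paper's $\widehat{\mathscr{P}}$. The genuine gap is in the central step, the direct verification of the N\'eron mapping property. The N\'eron property of $\mathscr{E}$ and the N\'eron lft-property of $\mathscr{R}$ only give extensions of morphisms from test schemes that are \emph{smooth over} $\Og_K$, but the cover $T'\to T$ you need in order to lift $\varphi$ through $H\times E\to P$ is a torsor under $\boldsymbol{\mu}_{p}$ (and $H=R[p]$ is itself non-smooth, since $\Lie R[p]=\Lie R$ has dimension $p$ while $\dim R[p]=p-1$), so $T'$ cannot in general be chosen smooth over $\Og_K$. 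Already for $T=\Spec\Og_K$ and a point $\varphi\in P(K)$ whose obstruction class in $K^\times/(K^\times)^p$ is nontrivial, the lift exists only after an inseparable extension $K(a^{1/p})$, and over the corresponding (ramified) extension of $\Og_K$ the extension step for $\tilde h$ fails: sections of $\mathscr{G}_{\mathrm{m}}$, hence of $\mathscr{R}$, do not extend over ramified discrete valuation ring extensions (a unit of valuation not divisible by the ramification index has no extension). So the key step "extend $\tilde h$ to $T'\to\mathscr{R}$ by the N\'eron lft-property" is not available, and nothing in the argument replaces it. A secondary slip: your claim that the composite $C\to\mathscr{R}\to\mathscr{R}/\mathscr{R}^0$ vanishes because it vanishes on the generic fibre is empty (the whole generic fibre lies in $\mathscr{R}^0$, and a section can still hit a non-identity component of the special fibre, e.g.\ the constant section $\pi$ of $\mathscr{G}_{\mathrm{m}}$); the correct reason a $p$-torsion section lands in $\mathscr{R}^0$ is that the component group $\Z$ is torsion-free.

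This lifting difficulty is exactly what the paper's proof is built to avoid: it never tests the universal property on covers. Instead it first proves (i) directly (the retraction of $\mathscr{E}\to\mathscr{P}$ after base change to $\Og_L$, plus descent of the closed-immersion property), then shows that $\mathscr{P}/\mathscr{E}\to\mathscr{N}$ is an open immersion, and finally, in the delicate case $e_{L/K}=p$, excludes the possibility that this open immersion is surjective by an $\Ext$-group argument: if $0\to\mathscr{E}\to\mathscr{P}\to\mathscr{N}\to 0$ were exact, the extension $0\to\mathscr{G}_{\mathrm{m}}\to\mathscr{R}\to\mathscr{N}\to 0$ would have to come from a $\boldsymbol{\mu}_{p}$-extension of $\mathscr{N}$, forcing $0\to\mathscr{G}_{\mathrm{m}}/\boldsymbol{\mu}_{p}\to\mathscr{R}/\boldsymbol{\mu}_{p}\to\mathscr{N}\to 0$ to split, which is impossible because the component groups are $\Z$ and $\Z/p\Z$. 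Note that in this case the N\'eron model is an extension of $\mathscr{N}^0$, not of $\mathscr{N}$, by $\mathscr{E}$; deciding between these two possibilities is a global component-group question that a pointwise lifting argument of the kind you propose cannot see. As written, neither the identification $\mathscr{P}'=\mathscr{P}$ nor, consequently, (i) and (ii) is established.
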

\begin{proof}
Let us begin by showing (i). Observe that the sequence $0\to \Gm\to R\to U\to 0$ splits over $L.$ Hence the same is true for the extension $0\to E\to P\to U\to 0.$ Therefore, the morphism $\mathscr{E}\to\mathscr{P}$ acquires a retraction after base change to $\Og_L$ (this follows from the universal property of the Néron model because $E$ has good reduction). Hence $\mathscr{E}\to \mathscr{P}$ becomes a closed immersion after an fppf-cover and is therefore a closed immersion itself. 

Now we shall prove that the extension $0\to {\boldsymbol{\mu}_{p}}\to H\to U\to 0$ extends canonically to an extension $0\to {\boldsymbol{\mu}_{p}}\to \mathscr{H}\to \mathscr{N}^0\to 0.$ Consider the element $[0\to \Gm\to \mathscr{R}^0\to \mathscr{N}^0\to 0]\in \Ext^1(\mathscr{N}^0, \Gm).$ We have an exact sequence 
$$0=\Hom(\mathscr{N}^0, \Gm)\to \Ext^1(\mathscr{N}^0,{\boldsymbol{\mu}_{p}})\to \Ext^1(\mathscr{N}^0, \Gm)\overset{\cdot p}{\to} \Ext^1(\mathscr{N}^0, \Gm);$$ the $\Ext$-groups are taken in the category of fppf-sheaves. The last map in this sequence is equal to zero since $\mathscr{N}^0$ is killed by $p.$ Hence $\Ext^1(\mathscr{N}^0, {\boldsymbol{\mu}_{p}})\to \Ext^1(\mathscr{N}^0, \Gm)$ is an isomorphism and the element $[0\to \Gm\to \mathscr{R}^0\to \mathscr{N}^0\to 0]$ of $\Ext^1(\mathscr{N}^0, \Gm)$ comes uniquely from an element of $\Ext^1(\mathscr{N}^0, {\boldsymbol{\mu}_{p}})$. This element is represented by an exact sequence $0\to {\boldsymbol{\mu}_{p}}\to \mathscr{F}\to \mathscr{N}^0\to 0$ for some fppf-sheaf $\mathscr{F}.$ This sheaf is clearly a separated algebraic space\footnote{This is because fppf-descent of algebraic spaces is always effective; see, for example, the Stacks Project \cite{Stacks}, Tag 0ADV.} of finite presentation over $\Og_K$ with a group structure, so it is representable by \cite{An}, Chapitre IV, Théorème 4.B. This also shows that the extension $0\to E\to P\to U\to 0$ extends uniquely to an exact sequence 
$$0\to \mathscr{E}\to \widehat{\mathscr{P}}\to \mathscr{N}^0\to 0.$$
Our goal is to show that the canonical map $\widehat{\mathscr{P}}\to \mathscr{P}$ is an isomorphism, which will clearly imply claim (ii) from the Proposition. Because $\mathscr{E}\to \mathscr{P}$ is a closed immersion, there exists a canonical map $\mathscr{P}/\mathscr{E}\to \mathscr{N},$ and the map $\widehat{\mathscr{P}}\to \mathscr{P}$ induces a morphism $\mathscr{N}^0=\widehat{\mathscr{P}}/\mathscr{E}\to \mathscr{P}/\mathscr{E}.$ We shall need
\begin{lemma}
The map $\mathscr{P}/\mathscr{E}\to \mathscr{N}$ is an open immersion.
\end{lemma}
\begin{proof}
First observe that the map above is étale. Using the fibre-wise criterion of flatness, we can check this at the two fibres separately. The claim for the generic fibre is obvious. Since the composition $\mathscr{N}^0\to \mathscr{P}/\mathscr{E}\to \mathscr{N}$ is the canonical open immersion, the induced map on special fibres is étale as well, so the claim follows. Hence the kernel of $\mathscr{P}/\mathscr{E}\to \mathscr{N}$ is quasi-finite étale, and we already know that it is trivial generically. By looking at the base change of this kernel to $\Og_K^{\mathrm{sh}},$ we see that it must be finite over $\Og_K,$ and hence trivial. Putting things together, we see that $\mathscr{P}/\mathscr{E}\to \mathscr{N}$ is an étale monomorphism of schemes, and hence an open immersion. 
\end{proof}\\
We must now distinguish two cases. Assume first that $e_{L/K}=1.$ Then $\mathscr{N}=\mathscr{N}^0,$ and the lemma above shows that the sequence $0\to\mathscr{E}\to \mathscr{P}\to \mathscr{N}\to 0$ is exact. Hence $\widehat{\mathscr{P}}=\mathscr{P}$ in this case. \\
Now suppose that $e_{L/K}=p.$ Then the map $\mathscr{P}/{\mathscr{E}}\to \mathscr{N}$ is either surjective or induces an isomorphism $\mathscr{P}/{\mathscr{E}}\to \mathscr{N}^0.$
Hence we only have to exclude the first case. Suppose therefore, in order to derive a contradiction, that the sequence 
\begin{align}0\to \mathscr{E}\to \mathscr{P}\to \mathscr{N}\to 0\label{extn}\end{align}
 is exact. Considering the exact sequence $0\to {\boldsymbol{\mu}_{p}}\to \mathscr{E}\to \mathscr{E}/{\boldsymbol{\mu}_{p}}\to 0,$ we obtain an exact sequence
$$0=\Hom(\mathscr{N}, \mathscr{E}/{\boldsymbol{\mu}_{p}})\to \Ext^1(\mathscr{N}, {\boldsymbol{\mu}_{p}})\to \Ext^1(\mathscr{N}, \mathscr{E})\to \Ext^1(\mathscr{N}, \mathscr{E}/{\boldsymbol{\mu}_{p}}).$$
We claim that the image of $(\ref{extn})$ in $\Ext^1(\mathscr{N}, \mathscr{E}/{\boldsymbol{\mu}_{p}})$ vanishes. This is equivalent to the claim that the sequence
$$0\to \mathscr{E}/{\boldsymbol{\mu}_{p}}\to \mathscr{P}/{\boldsymbol{\mu}_{p}}\to \mathscr{N}\to 0$$ splits. However, the second and fourth term of this sequence are Néron models (of finite type) of their respective generic fibres, which implies that so is the middle term (see \cite{BLR},  Chapter 7.5, proof of Proposition 1(b)). It follows from the construction of $P$ that the sequence $0\to E/{\boldsymbol{\mu}_{p}}\to P/{\boldsymbol{\mu}_{p}}\to U\to 0$ splits, and this splitting induces one at the level of Néron models. Hence the exact sequence of $\Ext$-groups above shows that $(\ref{extn})$ comes from a unique exact sequence
$$0\to {\boldsymbol{\mu}_{p}}\to \mathscr{S}\to\mathscr{N}\to 0,$$ whose generic fibre must, by uniqueness, coincide with the extension of the same kind we used to construct $P.$ Taking the push-out of this sequence along ${\boldsymbol{\mu}_{p}}\to \mathscr{G}_{\mathrm{m}}$ gives us an exact sequence
$$0\to \mathscr{G}_{\mathrm{m}}\to \widehat{\mathscr{S}}\to \mathscr{N}\to 0.$$ However, this contradicts the following
\begin{lemma}
(i) The canonical morphism $\widehat{\mathscr{S}}\to \mathscr{R}$ is an isomorphism, \\
(ii) The sequence $0\to \mathscr{G}_{\mathrm{m}}\to \mathscr{R}\to \mathscr{N}\to 0$ does not come from an exact sequence $0\to {\boldsymbol{\mu}_{p}}\to \mathscr{S}\to \mathscr{N}\to 0.$ 
\end{lemma}
\begin{proof}
For part (i), all we have to show is that for all discrete valuation rings $T$ of ramification index 1 over $\Og_K,$ the canonical map $\widehat{\mathscr{S}}(T)\to \widehat{\mathscr{S}}(\Frac T)$ is bijective (\cite{BLR}, Chapter 10.1, Proposition 2). The same argument as in the proof of \cite{BLR}, Chapter 7.5, Proposition 1 reduces this claim to the assertion that the map $\widehat{\mathscr{S}}(T)\to \mathscr{N}(T)$ is surjective. This will follow if we can show that $H^1(\Spec T, \mathscr{G}_{\mathrm{m}})=0.$ However, if $j$ denotes the inclusion of the special point of $\Spec T,$ we have an exact sequence
$0\to \Gm\to \mathscr{G}_{\mathrm{m}}\to j_\ast\Z\to 0,$ which gives rise to an exact sequence $H^1(\Spec T, \Gm)\to H^1(\Spec T, \mathscr{G}_{\mathrm{m}})\to H^1(\Spec T, j_\ast \Z).$ Clearly, $H^1(\Spec T, \Gm)=\Pic \Spec T=0,$ and the last term of the sequence is equal to the Galois cohomology of the residue field of $T$ with coefficients in $\Z,$ which is trivial as well. \\
For part (ii), note that if $0\to \mathscr{G}_{\mathrm{m}}\to \mathscr{R}\to \mathscr{N}\to 0$ came from an exact sequence $0\to {\boldsymbol{\mu}_{p}}\to \mathscr{S}\to \mathscr{N}\to 0,$ then the sequence $0\to \mathscr{G}_{\mathrm{m}}/{\boldsymbol{\mu}_{p}}\to \mathscr{R}/{\boldsymbol{\mu}_{p}}\to \mathscr{N}\to 0$ would have to split, which is not the case because the component group of $\mathscr{R}/{\boldsymbol{\mu}_{p}}$ is isomorphic to $\Z,$ whereas the component group of $\mathscr{N}$ is isomorphic to $\Z/p\Z$ since we assume $e_{L/K}=p.$
\end{proof}\\
This Lemma finishes the proof of Proposition \ref{exampleprop}
\end{proof}\\
Both examples treated above seem to suggest that, for a pseudo-Abelian variety $P$ over $K$ with good reduction over $\Og_K$, it should be reasonable to expect that $\Phi_P=0.$ There is some further evidence that, for a pseudo-Abelian variety $P$ over $K$ (not necessarily with good reduction), the component group scheme $\Phi_P$ should vanish \it almost always \rm in the following sense: Suppose $S$ is an excellent Dedekind scheme with field of fractions $K,$ and let $P$ be a pseudo-Abelian variety over $K.$ If $P$ admits a Néron model $\mathscr{P}\to S,$ then the component groups will vanish at all but finitely many closed points of $T$ (see \cite{BLR}, Chapter 10.1, Corollary 10). Although it is not known whether pseudo-Abelian varieties admit Néron models over general Dedekind schemes, this would follow from resolution of singularities in characteristic $p.$ Indeed, one sees easily that the maximal unirational subgroup $\mathrm{uni}_K(P)$ of $P$ over $K$ is trivial (since such groups are always smooth, connected, and affine), so the existence of Néron models over general Dedekind schemes would follow from the existence of regular compactifications of pseudo-Abelian varieties (see \cite{BLR}, Chapter 10.3, Theorem 5(a)). Since this is widely believed to hold, our observations can be viewed as further evidence that almost all pseudo-Abelian varieties over discretely valued fields should have trivial component group. This motivates the following 
\begin{question}
Let $\Og_K$ be an excellent discrete valuation ring with field of fractions $K$ and let $P$ be a pseudo-Abelian variety over $K.$ Is it true that, if $P$ has good reduction, then $\Phi_P=0$?  \label{Phiquestion}
\end{question}
\section{Pseudo-semiabelian reduction}
In this paragraph, we shall define an analogue of semi-Abelian reduction for pseudo-Abelian varieties. The setup will be the same as in the last paragraph: We let $\Og_K$ be an excellent discrete valuation ring with residue field $\kappa$ and field of fractions $K,$ assumed to be of characteristic $p>0.$ Let $P$ be a pseudo-Abelian variety over $K.$ 
Classically, an Abelian variety $A$ over $K$ is said to have \it semiabelian reduction \rm if the identity component $\mathscr{A}^0$ of the Néron model $\mathscr{A}$ of $A$ is a semiabelian scheme over $\Og_K,$ i.e., if its special fibre is an extension of an Abelian variety by a torus. This is equivalent to the condition that the defect $\delta(A)$ be equal to the toric rank of the special fibre of $\mathscr{A}.$ If $P$ is a pseudo-Abelian variety over $K$ with Néron model $\mathscr{P}\to \Spec \Og_K,$ then we still have $t(\mathscr{P}_{\kappa})\leq \delta(P)$ by Proposition \ref{deltajumpprop}. 
\begin{definition}
The pseudo-Abelian variety $P$ over $K$ has \rm pseudo-semiabelian reduction \it (over $\Og_K$) if \label{pseudosemabdef}
$$\delta(P)=t(\mathscr{P}_{\kappa}).$$ 
\end{definition}
In the realm of classical semiabelian reduction, there are two fundamental results. The first is the \it semiabelian reduction theorem, \rm due originally to Grothendieck, which says that we can find a finite separable extension $F/K$ such that the Abelian variety acquires semiabelian reduction over the integral closure of $\Og_K$ in $F.$ The second is a characterization of semiabelian reduction in terms of the Galois representation on the Tate module of the Abelian variety. We shall see that both those results hold true in the world of pseudo-Abelian varieties as well. However, we shall also see that Néron models of pseudo-Abelian varieties with pseudo-semiabelian reduction behave quite differently from semiabelian schemes in some ways. 
\begin{theorem}
Let $P$ be a pseudo-Abelian variety over $K.$ Then there exists a finite separable extension $F/K$ with the following property: For all localizations $S$ of the integral closure of $\Og_K$ in $F$ at a non-zero prime ideal, $P_F$ has pseudo-semiabelian reduction over $S.$ 
\end{theorem}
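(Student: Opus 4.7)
The strategy is to reduce the statement to Grothendieck's classical semiabelian reduction theorem for abelian varieties by exploiting the purely inseparable extension furnished by Lemma \ref{Psplitlem}. First, I would choose a finite purely inseparable extension $L/K$ and an exact sequence $0\to U\to P_L\to A\to 0$ as in Lemma \ref{Psplitlem}, with $A$ an abelian variety and $U$ a smooth connected unipotent group over $L.$ Applying Grothendieck's semiabelian reduction theorem (SGA 7, Expos\'e IX) to $A,$ one obtains a finite separable \emph{Galois} extension $L'/L$ such that, for every localization $T$ of $\Og_{L'}$ at a non-zero prime ideal, $A_{L'}$ has semiabelian reduction over $T.$

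The next step is to descend $L'$ to a finite separable extension of $K.$ By Lemma \ref{univlem}(i), $\Spec L\to \Spec K$ is a universal homeomorphism, so by Proposition \ref{topolequivprop} the base change functor gives an equivalence between the categories of finite \'etale $K$-algebras and finite \'etale $L$-algebras. Under this equivalence, the connected $L$-algebra $L'$ corresponds to a unique finite separable extension $F/K$ satisfying $L'=L\otimes_KF;$ moreover $F/K$ is Galois and $\Gal(F/K)$ is canonically isomorphic to $\Gal(L'/L).$ I claim that this $F$ has the desired property.

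To verify the claim, fix a localization $S$ of $\Og_F$ at a non-zero prime. By Theorem \ref{unipotentthm} it suffices to exhibit some prime $\ell$ invertible in $\Og_K$ such that the inertia group $I_S\subseteq \Gal(F\sep/F)$ acts unipotently on $T_{\ell}(P_F).$ Applying Lemma \ref{Psplitlem} to $P_F$ along the purely inseparable extension $L'/F,$ together with the Tate module computation of Section 3 and the topological invariance of the small \'etale site, one obtains a canonical Galois-equivariant isomorphism $T_{\ell}(P_F)\cong T_{\ell}(A_{L'}),$ where $\Gal(F\sep/F)$ acts on the right through its canonical identification with $\Gal((L')\sep/L').$ By Lemma \ref{univlem}(ii) the integral closure $T$ of $S$ in $L'$ is a discrete valuation ring, and $T$ is the localization of $\Og_{L'}$ at the unique prime above the maximal ideal of $S$ (uniqueness follows from Lemma \ref{univlem}(i) applied to $L'/F$). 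Since $A_{L'}$ has semiabelian reduction over $T$ by construction, Grothendieck's classical criterion shows that $I_T$ acts unipotently on $T_{\ell}(A_{L'}).$ The setup developed before Theorem \ref{NOScritthhm}, transferred verbatim to the tower $K\subseteq F\subseteq L',$ identifies $I_S$ with $I_T$ under the above identification of Galois groups, yielding the desired unipotence.

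The principal obstacle is the careful bookkeeping of inertia subgroups across the two towers $K\subseteq L\subseteq L'$ (with $L/K$ purely inseparable, $L'/L$ separable) and $K\subseteq F\subseteq L'$ (with $F/K$ separable, $L'/F$ purely inseparable), together with the matching of the corresponding prime ideals of the rings of integers; this is handled by Lemma \ref{univlem} and Proposition \ref{topolequivprop}, while the choice of $L'/L$ Galois (equivalently $F/K$ Galois) is what ensures the conclusion holds at every maximal ideal of $\Og_F$ simultaneously.
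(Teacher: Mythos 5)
Your argument is correct, and its skeleton coincides with the paper's: split $P$ over a finite purely inseparable extension $L/K$ via Lemma \ref{Psplitlem}, apply Grothendieck's semiabelian reduction theorem to $A$ over $L$, and descend the resulting separable extension of $L$ to a separable extension $F$ of $K$ (the paper states this descent in one line; your justification via the universal homeomorphism $\Spec L\to \Spec K$ and Proposition \ref{topolequivprop} is exactly the right reason, and insisting that $L'/L$ be Galois is a harmless extra precaution for handling all primes of $\Og_F$ at once). Where you diverge is in the final transfer step. The paper never mentions inertia or unipotence here: it compares the N\'eron model $\mathscr{P}\times_S\Spec S'$ of $P_F$ with the N\'eron model $\mathscr{A}$ of $A_{F\otimes_KL}$ directly, using Proposition \ref{torsisoprop} to see that the map induces isomorphisms on $\ell^n$-torsion and then Proposition \ref{alphainvariantprop} to conclude that the special fibres have the same invariants $\alpha$ and $t$, which is precisely pseudo-semiabelian reduction. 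You instead route everything through Tate modules: identify $T_\ell(P_F)$ with $T_\ell(A_{L'})$, match inertia subgroups across the two towers, quote the classical Galois criterion for $A_{L'}$, and invoke Theorem \ref{unipotentthm}, (ii)$\Rightarrow$(i). This is logically sound --- Theorem \ref{unipotentthm} is proved in the paper independently of the present theorem (its proof rests on the sequence $0\to E\to P\to V\to 0$, the classical criterion for $E$, and Propositions \ref{Eprop} and \ref{alphainvariantprop}), so the forward reference creates no circularity --- but be aware that in the paper's ordering the present theorem precedes Theorem \ref{unipotentthm}, so if one wanted to keep the exposition in that order your proof would have to be moved or the criterion proved first. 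The trade-off: your route makes the conceptual point that reduction type is read off from the Galois representation, which is insensitive to purely inseparable twists, at the price of the inertia and prime-matching bookkeeping you acknowledge; the paper's route stays at the level of models, needs no choice of inertia subgroups, and is self-contained at that point in the text.
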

\begin{proof}
Let $L$ be a finite purely inseparable extension of $K$ over which there is an exact sequence $0\to U\to P_L\to A\to 0,$ where $U$ is smooth, connected, commutative, and unipotent, and $A$ is an Abelian variety. By Grothendieck's theorem on semiabelian reduction (see \cite{BLR}, Chapter 7.4, Theorem 1), there exists a finite separable extension $F'$ of $L$ such that $A$ acquires semiabelian reduction over $F'$. Because $L/K$ is purely inseparable, $F'$ is of the form $F'=F\otimes_KL$ for some finite separable extension $F/K.$ We claim that $P$ acquires pseudo-semiabelian reduction over $F.$ Indeed, let $S$ be the localization of the integral closure of $\Og_K$ in $F$ at a non-zero prime, and let $S'$ be the integral closure of $S$ in $F\otimes_KL.$ Let $\mathscr{P}\to \Spec S$ be the Néron model of $P_F,$ and let $\mathscr{A}$ be the Néron model of $A_{F\otimes_KL}$ over $S'.$ Then the morphism 
$$\mathscr{P}\times_S \Spec S'\to \mathscr{A}$$
induces isomorphisms on $\ell$-torsion subschemes for all prime numbers $\ell\in \Og_K^\times$ by Proposition \ref{torsisoprop}, which implies (using Proposition \ref{alphainvariantprop}) that the invariants $\alpha(-)$ and $t(-)$ of the special fibres of $\mathscr{P}$ and $\mathscr{A}$ coincide. Because $A_{F\otimes_KL}$ has semiabelian reduction by our choice of $F,$ the claim of the Proposition follows.  
\end{proof}\\
In the world of Abelian varieties, semiabelian models satisfy a uniqueness property almost as strong as smooth proper models: Indeed, suppose $A$ is an Abelian variety over $K$ and that $\mathscr{A}\to \Spec \Og_K$ is a smooth separated model with \it connected special fibre. \rm  Assume further that $t(\mathscr{A}_{\kappa})=\delta(\mathscr{A}).$ Then $\mathscr{A}$ is isomorphic to the identity component of the Néron model of $A$. In particular, there is only one model of $A$ with those properties up to unique isomorphism. This fails for pseudo-Abelian varieties as shown by the remark after Proposition \ref{goodredpreseredprop}. We do, however, have the following
\begin{proposition}
Let $P$ be a pseudo-Abelian variety over $K,$ and suppose that $\mathscr{P}'\to \Spec \Og_K$ be a smooth, separated model of $P$ such that $$t(\mathscr{P}'_\kappa)=\delta(\mathscr{P}').$$ Then $P$ has pseudo-semiabelian reduction. 
\end{proposition}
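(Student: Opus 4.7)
The plan is to reduce the assertion to the analogous classical fact for Abelian varieties by passing to a finite purely inseparable extension of the base. Invariance of the virtual invariants $\alpha(-)$ and $t(-)$ under arbitrary extensions of the base field, together with the comparison results of Proposition \ref{torsisoprop}, will let me transfer the equality $t=\delta$ freely between $\Og_K$ and a suitable $\Og_L$.

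Concretely, I would first apply Lemma \ref{Psplitlem} to obtain a finite purely inseparable extension $L/K$ and an exact sequence $0\to U\to P_L\to A\to 0$ with $U$ smooth, connected, unipotent and $A$ an Abelian variety over $L$. Lemma \ref{univlem}(ii) shows that the integral closure $\Og_L$ of $\Og_K$ in $L$ is a discrete valuation ring, finite over $\Og_K$, with residue field $\kappa'$ a purely inseparable extension of $\kappa$. Let $\mathscr{U}$ be the Zariski closure of $U$ in $\mathscr{P}'_{\Og_L}$; the argument from the proof of Proposition \ref{deltajumpprop} shows that $\mathscr{A}':=\mathscr{P}'_{\Og_L}/\mathscr{U}$ is a smooth separated $\Og_L$-group scheme of finite presentation with generic fibre $A$, and that the induced morphisms $\mathscr{P}'_{\Og_L}[\ell]\to \mathscr{A}'[\ell]$ are isomorphisms for all primes $\ell$ invertible on $\Og_K$ (using Lemma \ref{lisomlem} together with the snake lemma). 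Proposition \ref{alphainvariantprop}, combined with invariance under the field extension $\kappa\subseteq \kappa'$, then yields
$$\alpha(\mathscr{A}'_{\kappa'})=\alpha(\mathscr{P}'_\kappa),\qquad t(\mathscr{A}'_{\kappa'})=t(\mathscr{P}'_\kappa).$$
Since $\alpha(P)=\dim A=\alpha(A)$, this forces $\delta(\mathscr{A}')=\delta(\mathscr{P}')=t(\mathscr{P}'_\kappa)=t(\mathscr{A}'_{\kappa'})$. A dimension count on the $d$-dimensional special fibre now shows that the unipotent invariant of $\mathscr{A}'^0_{\kappa'}$ vanishes, i.e., $\mathscr{A}'^0$ is a semiabelian $\Og_L$-scheme extending $A$. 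By the classical theory of semiabelian reduction for Abelian varieties (uniqueness of semiabelian models, \cite{BLR}, Chapter 7.4), this forces $A$ to have semiabelian reduction over $\Og_L$.

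It then remains to transport this conclusion back to $P$ over $\Og_K$. If $\mathscr{A}$ denotes the N\'eron model of $A$ over $\Og_L$, semiabelian reduction means $t(\mathscr{A}_{\kappa'})=\delta(A)=\alpha(A)-\alpha(\mathscr{A}_{\kappa'})$. Parts (iv) and (v) of the Corollary following Proposition \ref{torsisoprop}, combined with invariance of $\alpha$ and $t$ under $\kappa\subseteq\kappa'$, give
$$\alpha(\mathscr{P}_\kappa)=\alpha(\mathscr{A}_{\kappa'}),\qquad t(\mathscr{P}_\kappa)=t(\mathscr{A}_{\kappa'}).$$
Combined with $\alpha(P)=\alpha(A)$, this yields $t(\mathscr{P}_\kappa)=\alpha(P)-\alpha(\mathscr{P}_\kappa)=\delta(P)$, which is precisely pseudo-semiabelian reduction for $P$.

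The main obstacle is really bookkeeping rather than substance: one must verify that, at every stage, the invariants line up correctly across the purely inseparable residue field extension $\kappa\subseteq\kappa'$ and across the quotient map killing the unipotent part. Both tasks are discharged by the tools already established in Propositions \ref{alphainvariantprop}, \ref{torsisoprop}, and Lemma \ref{lisomlem}, so the argument essentially reuses the machinery developed for Theorem \ref{NOScritthhm} without requiring any genuinely new input beyond the classical Abelian result.
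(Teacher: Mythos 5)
Your argument is correct, but it takes a genuinely different route from the paper's. The paper never leaves $\Og_K$: after reducing to the strictly Henselian case it compares $\mathscr{P}'$ with the N\'eron model $\mathscr{P}$ via the canonical map, picks $\ell$ prime to the orders of the component groups, and counts $\ell$-torsion points to get $t'+2\alpha'\leq t+2\alpha$ (with $\alpha,\alpha',t,t'$ the virtual Abelian and toric ranks of the two special fibres); combined with Proposition \ref{deltajumpprop} and the hypothesis $t'+\alpha'=\alpha(P)$ this forces $\alpha=\alpha'$ and $t=t'$, hence $\delta(P)=t(\mathscr{P}_\kappa)$, with no appeal to the classical theory of semiabelian models. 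You instead kill the unipotent part over the purely inseparable extension $L$ (schematic closure of $U$, Anantharaman quotient, $\ell$-torsion isomorphisms exactly as in Proposition \ref{deltajumpprop}), reduce to the classical fact that an Abelian variety admitting a smooth separated model with $t=\delta$ has semiabelian reduction, and transport the conclusion back through parts (iv) and (v) of the Corollary to Proposition \ref{torsisoprop}; this is not circular, since that Corollary and the classical Abelian statement (which the paper itself records, without proof, in the paragraph preceding this Proposition) are independent of the present result. The trade-off: your route needs the heavier quotient machinery over $\Og_L$ together with the same bookkeeping the paper performs in Proposition \ref{deltajumpprop} (pass to $\mathscr{P}'^0$ so that Proposition \ref{alphainvariantprop} applies to connected fibres, and check the quotient is separated and of finite type before invoking the classical uniqueness statement), and it outsources the core inequality argument to the Abelian case; the paper's counting argument is shorter, stays over $\Og_K$, and yields as a byproduct that $\alpha'$ and $t'$ are independent of the chosen model, while your argument gives the slightly finer information that $(\mathscr{P}'^0_{\Og_L}/\mathscr{U})$ is the identity component of the N\'eron model of $A$ over $\Og_L$.
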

\begin{proof}
We may assume without loss of generality that $\Og_K$ be strictly Henselian (\cite{BLR}, Chapter 7.2, Corollary 2). Let $\mathscr{P}\to \Spec \Og_K$ be the Néron model of $P.$ Let $\alpha:=\alpha(\mathscr{P}_ \kappa),$ $\alpha':=\alpha(\mathscr{P}'_\kappa),$ $t:=t(\mathscr{P}_\kappa),$ and $t':=t(\mathscr{P}'_\kappa).$ We shall first show that $t'\leq t.$ Let $T'$ and $T$ be the maximal tori in the special fibres of $\mathscr{P}'$ and $\mathscr{P}$, respectively. Let $\ell$ be a prime number invertible in $\Og_K$ which does not divide the number of irreducible components of the special fibre of either $\mathscr{P}$ or $\mathscr{P}'.$ If the induced morphism $T'\to T$ were not finite, the map $T'[\ell](\kappa)\to T[\ell](\kappa)$ would not be injective. However, the map $\mathscr{P}'[\ell](\Og_K)\to \mathscr{P}[\ell](\Og_K)$ is injective because $\mathscr{P}[\ell]$ and $\mathscr{P}'[\ell]$ are both separated and étale over $\Og_K$ and the map is injective generically. Further observe that 
$\ord \mathscr{P}[\ell](\Og_K)=\ell^{t+2\alpha},$ and similarly for $\mathscr{P}'[\ell](\Og_K).$ This shows that $t'+2\alpha'\leq t+2\alpha.$ Moreover, Proposition \ref{deltajumpprop} tells us that $t+\alpha\leq \alpha(P)=t'+\alpha'.$ Putting all these inequalities together, we obtain 
$$0\leq t-t'\leq \alpha'-\alpha\leq 2(\alpha'-\alpha)\leq t-t'.$$ This forces $\alpha=\alpha'$ and hence $t=t'.$ 
\end{proof}\\
$\mathbf{Remark.}$ The proof of the Proposition above also shows that the invariants $\alpha'$ and $t'$ do not depend on the choice of the particular model $\mathscr{P}'.$ \\
\\
We can now state and prove an analogue of Grothendieck's representation-theoretic criterion for semiabelian reduction of Abelian varieties:
\begin{theorem}
Let $\Og_K$ be an excellent discrete valuation ring with field of fractions $K.$ Let $K\sep$ be a separable closure and $I\subseteq \Gal(K\sep/K)$ an inertia subgroup. Let $P$ be a pseudo-Abelian variety over $K.$ Let $\ell$ be a prime number invertible in $\Og_K.$ Then the following are equivalent: \\
(i) The pseudo-Abelian variety $P$ has pseudo-semiabelian reduction over $\Og_K,$\\
(ii) The action of $I$ on $T_{\ell}(P)$ is unipotent, and \\
(iii) For all $\sigma\in I,$ we have $(\sigma-1)^2=0$ as operators on $T_\ell(P).$ \label{unipotentthm} 
\end{theorem}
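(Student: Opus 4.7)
The plan is to reduce all three conditions to the analogous statements for the abelian quotient $A$ of $P_L$, where $L/K$ is a suitable finite purely inseparable extension, so that the theorem becomes a formal consequence of Grothendieck's classical criterion for semiabelian reduction of abelian varieties (\cite{SGA7 IX}, Th\'eor\`eme 4.7 and Proposition 3.5). This follows the same blueprint already used in the proof of Theorem \ref{NOScritthhm}.

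First I would, by Lemma \ref{Psplitlem}, choose a finite purely inseparable extension $L/K$ over which $P_L$ fits into an exact sequence $0\to U \to P_L \to A \to 0$ with $U$ smooth, connected, and unipotent and $A$ abelian. Let $\Og_L$ be the integral closure of $\Og_K$ in $L$ (a discrete valuation ring whose residue field extension $\kappa'/\kappa$ is purely inseparable, by Lemma \ref{univlem}), and let $\mathscr{A}$ be the N\'eron model of $A$ over $\Og_L$. The first key step is to translate condition (i) into the statement that $A$ has semiabelian reduction over $\Og_L$. For this I would combine the Corollary to Proposition \ref{torsisoprop}, which gives $\alpha(\mathscr{P}_{\kappa'}) = \alpha(\mathscr{A}_{\kappa'})$ and $t(\mathscr{P}_{\kappa'}) = t(\mathscr{A}_{\kappa'})$, with the base-field invariance of $\alpha$ and $t$ (cf.\ the Remark following the definition of the virtual ranks) and the equality $\alpha(P) = \dim A$ read off the $\Z_\ell$-rank of the Tate module from the Proposition at the end of \S 3.2. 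Together these yield $\delta(P) = \delta(A)$ and $t(\mathscr{P}_\kappa) = t(\mathscr{A}_{\kappa'})$, so (i) for $P$ is equivalent to semiabelian reduction of $A$ over $\Og_L$.

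Next I would handle conditions (ii) and (iii). The canonical identifications $\Gal(L\sep/L) \cong \Gal(K\sep/K)$ and $I_L \cong I_K$ constructed before Theorem \ref{NOScritthhm}, together with the isomorphism $T_\ell(P_L) \cong T_\ell(A)$ arising from the proof of the Proposition at the end of \S 3.2 and topological invariance of the \'etale site (Proposition \ref{topolequivprop}), yield a canonical isomorphism $T_\ell(P) \cong T_\ell(A)$ of continuous representations of the common inertia group. Thus (ii) and (iii) for $(P,K)$ translate verbatim into the corresponding statements for $(A,L)$, and Grothendieck's criterion closes the argument.

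The bulk of the work has in fact already been carried out in the preparatory material---Proposition \ref{torsisoprop} and its Corollary for the numerical side, and the Galois-theoretic dictionary assembled in the paragraph preceding Theorem \ref{NOScritthhm} for the representation-theoretic side. The one point to verify carefully is that the equality of toric ranks---not merely of virtual Abelian ranks---passes correctly through the Weil restriction along $\Og_K \to \Og_L$; fortunately, this too is supplied by the Corollary to Proposition \ref{torsisoprop}. No new obstacle is therefore expected beyond the black-box invocation of Grothendieck's theorem for abelian varieties.
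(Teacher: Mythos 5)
Your proposal is correct, but it takes a genuinely different route from the paper's own proof. The paper does not pass to the purely inseparable extension $L$ at all: it instead uses Totaro's exact sequence $0\to E\to P\to V\to 0$ \emph{over $K$ itself}, with $E$ the maximal Abelian subvariety and $V$ unipotent, notes that $T_\ell(E)\to T_\ell(P)$ is an isomorphism of Galois modules, and compares the reduction types of $E$ and $P$ directly over $\Og_K$: the N\'eron mapping property makes $\mathscr{E}[\ell']\to\mathscr{P}[\ell']$ an isomorphism for all $\ell'$ invertible in $\Og_K$, so Propositions \ref{Eprop} and \ref{alphainvariantprop} identify the relevant virtual Abelian and toric ranks, and Grothendieck's classical criterion is then applied to $E$ over $\Og_K$. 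Your argument instead reduces to the Abelian quotient $A$ of $P_L$ over $\Og_L$, using Proposition \ref{torsisoprop} and its Corollary (parts (iv) and (v)) for the numerical comparison, the invariance of $\alpha$ and $t$ under base field extension, and the identification of $\Gal(L\sep/L)$ with $\Gal(K\sep/K)$ (matching inertia subgroups) together with $T_\ell(P)\cong T_\ell(A)$ for the representation-theoretic side; this is exactly the machinery the paper deploys for its pseudo-semiabelian reduction theorem and for Theorem \ref{NOScritthhm}, and it is sound here as well. What each approach buys: the paper's reduction to $E$ avoids any base change and any bookkeeping of residue field extensions, making the proof of the theorem itself very short, and it then records the equivalence with semiabelian reduction of $A$ over $\Og_L$ afterwards as Corollary \ref{equivalentIIcor}; your route proves that equivalence en passant (so the Corollary becomes immediate), at the cost of carrying $L/K$, $\Og_L$, and $\kappa'/\kappa$ through the argument and of invoking the Weil-restriction/torsion-isomorphism apparatus where the N\'eron mapping property alone would have sufficed.
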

\begin{proof}
(i) $\Rightarrow$ (iii): By \cite{T}, Theorem 2.1, we can find an Abelian variety $E$ and a unipotent group $V$ over $K$ together with an exact sequence $0\to E\to P\to V\to 0.$ Then the map $T_\ell(E)\to T_\ell(P)$ is an isomorphism, and it suffices to show that $E$ has semiabelian reduction (see \cite{Con2}, Theorem 5.5 and the remark thereafter). Let $\mathscr{E}$ and $\mathscr{P}$ be the Néron models of $E$ and $P$, respectively. Then the morphism $$\mathscr{E}[\ell']\to \mathscr{P}[\ell']$$ is an isomorphism for all prime numbers $\ell'$ invertible on $\Og_K$ (this follows from the Néron mapping property). Now Propositions \ref{Eprop} and \ref{alphainvariantprop} imply the claim. \\
(iii) $\Rightarrow$ (ii) is trivial.\\
(ii) $\Rightarrow$ (i): Since $T_\ell(E)\to T_\ell(P)$ is an isomorphism, we know that $E$ has semiabelian reduction over $\Og_K.$ Now the same argument as above shows that $P$ has pseudo-semiabelian reduction. 
\end{proof}
\begin{corollary}
Let $P$ be a pseudo-Abelian variety over $K.$ Let $L$ be a finite purely inseparable extension of $K$ over which $P$ is an extension of an Abelian variety $A$ by a smooth connected unipotent group $U.$ Further, we write $P$ as an extension of a smooth connected unipotent group $V$ by an Abelian variety $E$ over $K.$ Then the following are equivalent: \\
(i) $P$ has pseudo-semiabelian reduction over $\Og_K$,\\
(ii) $E$ has semiabelian reduction over $\Og_K,$ and \\
(iii) $A$ has semiabelian reduction over $\Og_L.$  \label{equivalentIIcor}
\end{corollary}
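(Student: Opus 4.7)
The plan is to reduce all three equivalences to a single condition—unipotence of the inertia action on a common $\ell$-adic Tate module—by invoking Theorem \ref{unipotentthm} for $P$ and the classical Grothendieck criterion for Abelian varieties (\cite{SGA7 IX}) for $E$ and $A.$ The bulk of the work is book-keeping to show that the three relevant Tate modules are canonically identified as Galois representations, so that the common criterion transports between them.

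Fix a prime number $\ell$ invertible in $\Og_K.$ First, I would produce a canonical isomorphism $T_\ell(E) \xrightarrow{\sim} T_\ell(P)$ of $\Gal(K\sep/K)$-representations: since $V$ is a smooth connected unipotent group, $V[\ell^n]=0$ for all $n,$ so the snake lemma applied to $0\to E\to P\to V\to 0$ gives $E[\ell^n] \xrightarrow{\sim} P[\ell^n]$ functorially. (This is exactly the argument in the proof of Proposition \ref{Eprop}.) Similarly, I would produce a canonical isomorphism $T_\ell(A) \cong T_\ell(P)$: the purely inseparable extension $L/K$ yields a canonical identification $\Gal(L\sep/L) \cong \Gal(K\sep/K)$ (as spelled out in the discussion preceding Theorem \ref{NOScritthhm}) under which inertia subgroups correspond; combining topological invariance of the \'etale site (Proposition \ref{topolequivprop}) with the vanishing of $U[\ell^n]$ and the snake lemma applied to $0\to U\to P_L\to A\to 0$ gives $T_\ell(P)\cong T_\ell(P_L)\cong T_\ell(A)$ as representations of the identified Galois groups, exactly as in the proof of the proposition following Lemma \ref{Psplitlem}.

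Second, I would apply the criteria. Theorem \ref{unipotentthm} says that (i) holds if and only if an inertia subgroup $I\subseteq \Gal(K\sep/K)$ acts unipotently on $T_\ell(P).$ Grothendieck's theorem for Abelian varieties says that (ii), respectively (iii), holds if and only if $I,$ respectively its image in $\Gal(L\sep/L),$ acts unipotently on $T_\ell(E),$ respectively $T_\ell(A).$ Since the three Tate modules have been canonically identified as $I$-representations in the first step, the three unipotence conditions coincide, and the corollary follows.

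The only delicate point in the argument is naturality: one must check that the identifications $T_\ell(E)\cong T_\ell(P)\cong T_\ell(A)$ are actually equivariant under the corresponding Galois actions (and that the inertia subgroups of $K$ and $L$ really do correspond under the isomorphism $\Gal(L\sep/L)\cong \Gal(K\sep/K)$). This is the ``main obstacle,'' but it is more bureaucratic than substantive, since both ingredients have already been installed in the excerpt: the Galois-equivariant identification $T_\ell(P)\cong T_\ell(A)$ is built into the proof of the proposition after Lemma \ref{Psplitlem}, and the compatibility of inertia subgroups is precisely the content of the discussion preceding Theorem \ref{NOScritthhm}.
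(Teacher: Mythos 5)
Your proposal is correct and takes essentially the same route as the paper: the paper's proof is exactly the observation that $T_\ell(P),$ $T_\ell(E),$ and $T_\ell(A)$ are canonically isomorphic as Galois representations (via the vanishing of $\ell$-power torsion of $V$ and $U$ and topological invariance of the \'etale site), combined with the unipotence criterion of Theorem \ref{unipotentthm} and its classical counterpart for Abelian varieties. Your extra care about equivariance and the matching of inertia subgroups under $\Gal(L\sep/L)\cong\Gal(K\sep/K)$ is exactly the bookkeeping the paper has already set up before Theorem \ref{NOScritthhm}.
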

\begin{proof}
This follows from the previous Theorem together with the fact that the Galois representations $T_\ell(P),$ $T_\ell(A)$, and $T_\ell(E)$ are canonically isomorphic for all prime numbers $\ell\in \Og_K^\times.$
\end{proof}\\
Just as in the case of Abelian varieties, we have 
\begin{proposition}
Let $0\to P_1\to P_2\to P_3\to 0$ be an exact sequence of group schemes over $K$ all of whose elements are pseudo-Abelian varieties. Then $P_2$ has pseudo-semiabelian reduction if and only if so do $P_1$ and $P_3.$ 
\end{proposition}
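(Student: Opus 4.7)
The plan is to reduce, via Theorem \ref{unipotentthm}, to the statement that unipotence of the inertia action on the $\ell$-adic Tate module is preserved under extensions, subrepresentations, and quotients; the latter is a purely formal consequence of linear algebra. Fix a prime number $\ell$ invertible in $\Og_K$. By Theorem \ref{unipotentthm}, each $P_i$ has pseudo-semiabelian reduction if and only if the inertia group $I$ acts unipotently on $T_\ell(P_i)$. Hence the crux is to construct a short exact sequence of $\Gal(K\sep/K)$-modules
$$0\to T_\ell(P_1)\to T_\ell(P_2)\to T_\ell(P_3)\to 0,$$
after which standard linear algebra will finish the argument.

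To establish exactness of the Tate module sequence, I would apply the snake lemma to multiplication by $\ell^n$ on the given sequence in the category of fppf sheaves of abelian groups over $K$, yielding a six-term exact sequence involving the kernels $P_i[\ell^n]$ and the cokernels $P_i/\ell^n P_i$. I would then verify that each $P_i/\ell^n P_i$ vanishes as an fppf sheaf. By Lemma \ref{Psplitlem}, choose a finite purely inseparable extension $L/K$ (common to all three $P_i$) over which $P_{i,L}$ fits into an exact sequence $0\to U_i\to P_{i,L}\to A_i\to 0$ with $U_i$ smooth connected unipotent and $A_i$ an Abelian variety. On $U_i$, the map $[\ell^n]$ is an isomorphism by Lemma \ref{lisomlem}; on $A_i$ it is fppf-surjective with étale kernel; hence the snake lemma applied to this sequence shows that $[\ell^n]$ is fppf-surjective on $P_{i,L}$. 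Since $\Spec L\to \Spec K$ is finite and flat, hence an fpqc cover, fppf-surjectivity descends to $P_i$, so $P_i/\ell^n P_i = 0$. Thus one obtains a short exact sequence of étale $K$-group schemes
$$0\to P_1[\ell^n]\to P_2[\ell^n]\to P_3[\ell^n]\to 0,$$
and passing to $K\sep$-points (exact, since these are étale) and then to the inverse limit over $n$ (exact, because the transition maps are surjective and all terms are finite, so Mittag--Leffler is trivially satisfied) produces the desired short exact sequence of Galois modules.

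The remaining step is elementary: given $\sigma\in I$, unipotence of $\sigma$ on $T_\ell(P_2)$ immediately passes to the sub-representation $T_\ell(P_1)$ and the quotient $T_\ell(P_3)$; conversely, if $(\sigma-1)^{n_1}=0$ on $T_\ell(P_1)$ and $(\sigma-1)^{n_3}=0$ on $T_\ell(P_3)$, then the block-triangular form of $\sigma-1$ on $T_\ell(P_2)$ yields $(\sigma-1)^{n_1+n_3}=0$. Combined with Theorem \ref{unipotentthm}, this yields the claim. The main obstacle is the exactness of the Tate-module sequence—specifically, the verification that $[\ell^n]$ is fppf-surjective on each pseudo-Abelian variety $P_i$ despite the possible failure of Chevalley's theorem over $K$; once this is in hand, the representation-theoretic finish is automatic.
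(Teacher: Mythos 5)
Your proposal follows essentially the same route as the paper: the paper simply asserts the exactness of the sequence $0\to T_\ell(P_1)\to T_\ell(P_2)\to T_\ell(P_3)\to 0$ and invokes Theorem \ref{unipotentthm}, which is exactly your reduction. Your additional verification that $[\ell^n]$ is fppf-surjective on each $P_i$ (via the purely inseparable splitting, Lemma \ref{lisomlem}, and descent of surjectivity along $\Spec L\to\Spec K$) correctly fills in the detail the paper leaves implicit, and the concluding linear algebra is sound.
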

\begin{proof}
We have an exact sequence $0\to T_\ell(P_1)\to T_\ell(P_2)\to T_\ell(P_3)\to 0$ for some choice of prime number $\ell\in \Og_K^\times.$ Now the Proposition follows from Theorem \ref{unipotentthm}.
\end{proof}
\begin{proposition}
Let $P$ be a pseudo-Abelian variety over $K$ and let $F/K$ be a finite separable extension. Let $S$ be the localization of the integral closure of $\Og_K$ in $F$ at a non-zero prime ideal. Then $P_F$ has pseudo-semiabelian reduction over $S.$ 
\end{proposition}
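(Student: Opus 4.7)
The plan is to mirror the proof of Proposition \ref{goodredpreseredprop} essentially verbatim, but with Theorem \ref{unipotentthm} (Grothendieck's unipotency criterion) taking the place of Theorem \ref{NOScritthhm} (the N\'eron--Ogg--Shafarevich criterion). The natural reading of the statement is under the parallel hypothesis that $P$ already has pseudo-semiabelian reduction over $\Og_K$, and I proceed under that interpretation.

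First I would verify that Theorem \ref{unipotentthm} can be legitimately applied on the $S$-side. By Proposition \ref{japaneseprop}, the integral closure of $\Og_K$ in $F$ is finite over $\Og_K$, so its localization $S$ at any non-zero prime is a discrete valuation ring; since excellence passes to finite algebras, $S$ is excellent. The residue characteristic of $S$ equals $p$, so any prime $\ell$ invertible in $\Og_K$ remains invertible in $S$, and Theorem \ref{unipotentthm} is available for the pseudo-Abelian variety $P_F$ over $S$.

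Next I would carry out the Galois-theoretic comparison. As $F/K$ is finite separable, a separable closure $F\sep$ is canonically identified with $K\sep$, and $\Gal(K\sep/F)$ is an open subgroup of $\Gal(K\sep/K)$. Choosing compatible maximal ideals in the integral closures of $\Og_K$ and $S$ inside $K\sep$ (following the setup reviewed just before Theorem \ref{NOScritthhm}) produces inertia subgroups $I_K \subseteq \Gal(K\sep/K)$ and $I_S \subseteq \Gal(K\sep/F)$ with $I_S \subseteq I_K$ (up to the harmless conjugation ambiguity). The Tate module $T_\ell(P_F)$ is canonically identified with $T_\ell(P)$ as a $\Gal(K\sep/F)$-representation via restriction along $\Gal(K\sep/F) \hookrightarrow \Gal(K\sep/K)$.

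The conclusion is then immediate. By the hypothesis on $P$ together with Theorem \ref{unipotentthm}, the group $I_K$ acts unipotently on $T_\ell(P)$; its subgroup $I_S$ therefore also acts unipotently on $T_\ell(P_F)=T_\ell(P)$; applying Theorem \ref{unipotentthm} in the reverse direction to $P_F$ over $S$ yields pseudo-semiabelian reduction of $P_F$ over $S$. There is no genuine obstacle in the argument; the only point that requires care is the standard bookkeeping identifying the inertia group of $S$ as a subgroup of that of $\Og_K$ under a finite separable extension, which is precisely the content of the classical setup already recalled in the paper.
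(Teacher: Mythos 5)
Your proof is correct, and you were right to read in the hypothesis that $P$ has pseudo-semiabelian reduction over $\Og_K$: the statement omits it, but the paper's own proof uses it in exactly the way you assumed. Your route differs slightly from the paper's. The paper writes $P$ as an extension $0\to E\to P\to V\to 0$ with $E$ the maximal Abelian subvariety, invokes Corollary \ref{equivalentIIcor} to pass from pseudo-semiabelian reduction of $P$ to semiabelian reduction of $E$, quotes the classical fact that semiabelian reduction of the Abelian variety $E$ persists over $S$, and then applies the same Corollary again to conclude for $P_F$. You instead apply Theorem \ref{unipotentthm} directly on both sides, using the inclusion of inertia subgroups $I_S\subseteq I_K$ (for compatible choices of valuations on $K\sep$) and the identification $T_\ell(P_F)=T_\ell(P)$, so that unipotence of the $I_K$-action restricts to $I_S$. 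Since Corollary \ref{equivalentIIcor} is itself a consequence of Theorem \ref{unipotentthm}, the two arguments rest on the same criterion; yours makes the Galois-theoretic mechanism explicit and avoids appealing to the classical stability statement for Abelian varieties (whose standard proof is precisely your inertia-restriction argument), while the paper's is shorter given the Corollary already in hand, and it parallels your template Proposition \ref{goodredpreseredprop}. The only point worth adding in your write-up is the (also tacit in the paper) remark that $P_F$ is again pseudo-Abelian because $F/K$ is separable (\cite{T}, Lemma 2.3), which is needed before Theorem \ref{unipotentthm} can be applied to $P_F$ over $S$; your checks that $S$ is an excellent discrete valuation ring and that $\ell$ remains invertible are correct and appropriate.
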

\begin{proof}
Write $P$ as an extension $0\to E\to P\to V\to 0$ of an Abelian variety $E$ and a smooth, connected, commutative unipotent group $V$ over $K.$ Since $P$ has pseudo-semiabelian reduction, it follows that $E$ has semiabelian reduction by Corollary \ref{equivalentIIcor}. Since $E_F$ has semiabelian reduction over $S,$ the same Corollary implies that $P_F$ has pseudo-semiabelian reduction, as desired.  
\end{proof}\\
$\mathbf{Remark}.$ If $P$ is a pseudo-Abelian variety over $K$ with pseudo-semiabelian reduction and Néron model $\mathscr{P}\to \Spec \Og_K,$ it does not follow that the unipotent radical of $\mathscr{P}^0_k$ is trivial. Indeed, suppose that the residue field $\kappa$ be algebraically closed. Then, if $P$ arises as the Weil restriction of an Abelian variety over a non-trivial purely inseparable extension of $K$ with good reduction, the special fibre of $\mathscr{P}$ is the Weil restriction of an Abelian scheme over a finite non-étale (hence non-reduced) $\kappa$-algebra $A.$ Using that the special fibre of $\mathscr{P}$ is not proper, the following Lemma will show that the unipotent radical of $\mathscr{P}^0_{\kappa}$ is non-trivial. Note that, in particular, the unipotent radical of the special fibre of the Néron model cannot be removed after any finite separable base change.  
\begin{lemma}
Let $\kappa$ be a field and let $B$ be a finite $\kappa$-algebra. Let $E$ be an Abelian scheme over $B.$ Then the maximal torus of $\Res_{B/\kappa}E$ is trivial. 
\end{lemma}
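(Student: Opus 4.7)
The plan is to exhibit $\Res_{B/\kappa}E$ as an extension of an Abelian variety by a smooth connected unipotent group, then invoke the standard obstructions to mapping a torus into either an Abelian variety or a unipotent group.

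First, since the toric rank is invariant under extension of the base field, and the maximal subtorus of a smooth connected $\kappa$-group scheme descends from $\kappa\alg$ by Grothendieck's theorem on tori recalled earlier in the paper, I would pass to $\kappa=\kappa\alg$. Over this algebraically closed field, the finite $\kappa$-algebra $B$ splits as a product $B=\prod_i B_i$ of local Artinian $\kappa$-algebras, each with residue field $\kappa$. Because $\Res$ commutes with such a finite product decomposition, we have $\Res_{B/\kappa}E=\prod_i\Res_{B_i/\kappa}E_{B_i}$, and it suffices to treat each factor. So I reduce to the case where $B$ is local Artinian with maximal ideal $\mathfrak{m}$ and residue field $\kappa$.

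In this situation the quotient map $B\to B/\mathfrak{m}=\kappa$ is a $\kappa$-algebra retraction of the structure morphism, and therefore induces a $\kappa$-group scheme homomorphism $\pi\colon\Res_{B/\kappa}E\to E_\kappa$, where $E_\kappa:=E\times_B\Spec\kappa$ is an Abelian variety over $\kappa$. I would show that $\pi$ fits into a short exact sequence of smooth $\kappa$-group schemes
$$0\to U\to \Res_{B/\kappa}E\overset{\pi}{\to}E_\kappa\to 0,$$
with $U$ smooth, connected, and unipotent. Surjectivity and smoothness of $\pi$ follow from the formal smoothness of $E$ applied to the nilpotent surjections $R\otimes_\kappa B\twoheadrightarrow R$ as $R$ varies over $\kappa$-algebras. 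To identify $U$, I would filter by powers of $\mathfrak{m}$: the successive square-zero quotients $B/\mathfrak{m}^{n+1}\to B/\mathfrak{m}^n$ exhibit the corresponding filtration of $U$ as an iterated extension of vector groups of the form $\Lie(E_\kappa)\otimes_\kappa(\mathfrak{m}^n/\mathfrak{m}^{n+1})\cong \mathbf{G}_a^{N_n}$. Connectedness of the total space is also covered by \cite{CGP}, Corollary A.5.9, already cited in the paper.

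With the sequence in hand, the conclusion is immediate. Let $T\subseteq\Res_{B/\kappa}E$ be a subtorus. The image $\pi(T)$ is a smooth connected affine subgroup of the Abelian variety $E_\kappa$, hence trivial, so $T\subseteq U$. Since $\Hom(\mathbf{G}_m,\mathbf{G}_a)=0$, every homomorphism from a torus into an iterated extension of vector groups is trivial, so $T=0$. The main obstacle lies in verifying cleanly that $U$ really is smooth, connected, and unipotent; the most transparent way to do this is to identify $U$ functorially with the formal group $\widehat{E}$ of $E$ along its identity section, evaluated on the nilpotent ideal $R\otimes_\kappa\mathfrak{m}$, at which point the vector-group filtration is manifest from the standard power-series description of $\widehat{E}$.
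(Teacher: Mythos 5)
Your argument is correct, but it follows a genuinely different route from the paper. The paper's proof is a two-line reduction: after passing to $\kappa\alg$, triviality of the maximal torus amounts to the vanishing of all homomorphisms $\Gm\to \Res_{B/\kappa}E$, which by the adjunction defining Weil restriction is the same as the vanishing of all $B$-homomorphisms $(\Gm)_B\to E$; these vanish fibrewise because the fibres of $E$ are Abelian varieties, and then rigidity of groups of multiplicative type (SGA3, Expos\'e IX, Corollaire 3.5) kills them over the Artinian base, nilpotents and all. You instead establish the structural exact sequence $0\to U\to \Res_{B/\kappa}E\to E_\kappa\to 0$ over each local factor of $B\otimes_\kappa\kappa\alg$, with $U$ filtered by the vector groups $\Lie(E_\kappa)\otimes_\kappa(\mathfrak{m}^n/\mathfrak{m}^{n+1})$, and then use that a torus admits no nontrivial homomorphism either to an Abelian variety or to a unipotent group. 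This is sound: the reduction to $\kappa\alg$ and to local factors is legitimate since the maximal torus descends and Weil restriction commutes with base change and with product decompositions of the algebra, and the identification of the successive kernels along the square-zero quotients $B/\mathfrak{m}^{n+1}\to B/\mathfrak{m}^n$ is the standard deformation-theoretic fact (cf. \cite{CGP}, A.5.11--A.5.12); note also that smoothness and surjectivity of $\pi$ are not actually needed for the conclusion, only the unipotent filtration of $\ker\pi$. The trade-off: your approach proves strictly more (it exhibits $\Res_{B/\kappa}E$ over $\kappa\alg$ as an extension of an Abelian variety by a smooth connected unipotent group, which is the structural fact lurking behind the remark preceding the lemma), while the paper's adjunction-plus-rigidity argument is shorter and avoids any analysis of the kernel.
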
 
\begin{proof}
First note that $\Res_{B/\kappa}E$ is representable by Proposition \ref{finitealgprop}.  We may assume that $\kappa$ be algebraically closed and show that there are no no-trivial maps $\Gm\to \Res_{B/\kappa}E,$ which is the same as showing that all maps $\Gm\to E$ over $B$ are trivial. This is clearly the case at the special point of $\Spec B,$ so our assertion follows from \cite{SGA3}, Exposé IX, Corollaire 3.5.
\end{proof}\\
In fact, this is a special case of a more general phenomenon: 
\begin{proposition}
Let $P$ be a pseudo-Abelian variety over $K$ and let $\mathscr{P}'$ be a smooth separated model of $P$ such that $\mathscr{P}'^0_\kappa$ is semiabelian (i.e., is an extension of an Abelian variety by a torus). Then $P$ is an Abelian variety with semiabelian reduction. If $\mathscr{P}'^0_\kappa$ is an Abelian variety then $P$ has good reduction.
\end{proposition}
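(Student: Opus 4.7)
The plan is to reduce everything to a numerical argument built on Proposition \ref{deltajumpprop}, together with the Totaro decomposition of Proposition \ref{Eprop}, and then invoke the two preceding propositions in the section (the one saying $t(\mathscr{P}'_\kappa)=\delta(\mathscr{P}')$ forces pseudo-semiabelian reduction, and the one saying $\delta(\mathscr{P}')=0$ forces good reduction).

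First I would exploit that $\mathscr{P}'^0_\kappa$ is semiabelian, which by definition means its virtual unipotent rank vanishes. Consequently
$$\dim P \,=\, \dim \mathscr{P}'^0_\kappa \,=\, t(\mathscr{P}'_\kappa) + \alpha(\mathscr{P}'_\kappa).$$
Combining this with Proposition \ref{deltajumpprop}(i)--(ii), which yields $t(\mathscr{P}'_\kappa) + \alpha(\mathscr{P}'_\kappa) \leq \alpha(P)$, and using the trivial bound $\alpha(P) \leq \dim P$, one obtains $\alpha(P) = \dim P$. Now write $P$ as an extension $0 \to E \to P \to V \to 0$ with $E$ an Abelian variety and $V$ smooth connected unipotent, as provided by Proposition \ref{Eprop}; since $\dim E = \alpha(P) = \dim P$, we must have $V = 0$, so $P = E$ is already an Abelian variety.

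Moreover, the computation above forces the inequality $(ii)$ of Proposition \ref{deltajumpprop} to be an equality, i.e. $t(\mathscr{P}'_\kappa) = \alpha(P) - \alpha(\mathscr{P}'_\kappa) = \delta(\mathscr{P}')$. The previous Proposition then tells us that $P$ has pseudo-semiabelian reduction, which for the Abelian variety $P$ is literally semiabelian reduction. For the second assertion, if $\mathscr{P}'^0_\kappa$ is itself Abelian, then $t(\mathscr{P}'_\kappa) = 0$ and $\alpha(\mathscr{P}'_\kappa) = \dim P = \alpha(P)$, so $\delta(\mathscr{P}') = 0$, and the earlier Proposition characterizing good reduction via $\delta = 0$ concludes.

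The only technical wrinkle I expect is that the $\delta=0$ criterion was stated for models of finite type, while the present statement just assumes a smooth separated model. This is a mild point: replacing $\mathscr{P}'$ by its identity component $\mathscr{P}'^0$ preserves the generic fibre $P$ and the assumption on the special fibre, and since $\mathscr{P}'^0_\kappa$ is semiabelian (in particular of finite type) and $\mathscr{P}'^0$ is smooth over $\Og_K$ with quasi-compact fibres, $\mathscr{P}'^0 \to \Spec \Og_K$ is automatically of finite type, legitimizing the application of both preceding propositions.
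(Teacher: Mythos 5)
Your argument is correct, and it takes a noticeably different route from the paper's. The paper proves the proposition by comparing $\mathscr{P}'$ directly with the N\'eron model $\mathscr{P}$: it shows that the maps $\mathscr{P}'[\ell](\Og_K)\to\mathscr{P}[\ell](\Og_K)$ are injective, deduces that $\mathscr{P}'^0_\kappa\to\mathscr{P}^0_\kappa$ has finite kernel and hence that the special fibre of the N\'eron model is itself semiabelian, and then uses the maximal Abelian subvariety $E$ together with the N\'eron mapping property and Proposition \ref{alphainvariantprop} to get $\dim E\geq\dim P$, so $P=E$; the reduction statements are then read off. You instead make a purely numerical argument: semiabelianness of $\mathscr{P}'^0_\kappa$ gives $\dim P=t(\mathscr{P}'_\kappa)+\alpha(\mathscr{P}'_\kappa)$, Proposition \ref{deltajumpprop}(ii) plus the trivial bound $\alpha(P)\leq\dim P$ forces $\alpha(P)=\dim P$ and $t(\mathscr{P}'_\kappa)=\delta(\mathscr{P}')$, Proposition \ref{Eprop} then kills $V$ so $P=E$ is Abelian, and the two earlier propositions (the $t=\delta$ criterion for pseudo-semiabelian reduction and the $\delta=0$ criterion for good reduction) finish the proof. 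This buys a shorter, bookkeeping-style proof that never touches the N\'eron model or torsion points directly, at the cost of leaning on those two propositions (whose proofs contain the torsion-comparison work the paper redoes here explicitly); the paper's proof has the small added benefit of explicitly identifying the N\'eron special fibre as semiabelian along the way. Your handling of the finite-type wrinkle is also sound: $\mathscr{P}'^0$ has quasi-compact fibres (the special one being connected of finite type), hence is quasi-compact and, being locally of finite presentation by smoothness, of finite type, and passing to $\mathscr{P}'^0$ changes neither $\alpha(\mathscr{P}'_\kappa)$, $t(\mathscr{P}'_\kappa)$ nor $\delta(\mathscr{P}')$. The only step you use silently is $\dim P=\dim\mathscr{P}'^0_\kappa$, which follows from smoothness of $\mathscr{P}'^0$ over $\Og_K$ along the unit section and is also used implicitly in the paper, so this is harmless.
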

\begin{proof}
Assume that $\Og_K$ be strictly Henselian. We begin by showing that the special fibre of the Néron model $\mathscr{P}\to \Spec \Og_K$ of $P$ must be semiabelian as well. Consider the canonical map $\mathscr{P}'\to \mathscr{P}.$ Then, for all prime numbers $\ell\in \Og_K^\times,$ the map $\mathscr{P}'[\ell](\Og_K)\to \mathscr{P}[\ell](\Og_K)$ is injective (because the map $\mathscr{P}'[\ell]\to \mathscr{P}[\ell]$ is an isomorphism at the generic fibre). For all such $\ell$, the map 
$\mathscr{P}'^0_{\kappa}[\ell](\kappa)\to \mathscr{P}^0_{\kappa}[\ell](\kappa)$ is injective, which implies that the morphism $\mathscr{P}'^0_{\kappa}\to \mathscr{P}^0_{\kappa}$ has finite kernel. Since source and target of this map have the same dimension, this implies that $\mathscr{P}^0_{\kappa}$ is semiabelian. Now let $E$ be the maximal Abelian subvariety of $P,$ and denote its Néron model by $\mathscr{E}.$ Then the Néron mapping property implies that $\mathscr{E}[\ell]\to \mathscr{P}[\ell]$ is an isomorphism, so $\mathscr{E}_{\kappa}$ and $\mathscr{P}_{\kappa}$ have the same toric and virtual Abelian ranks by Proposition \ref{alphainvariantprop}. This shows that $\dim_KE\geq \dim_KP,$ so $E\to P$ is an isomorphism. 
The remaining claims are now obvious.
\end{proof}\\
In the situation above, it does not suffice to show that the unipotent radical of $\mathscr{P}'_{\kappa}$ is trivial (even if $\mathscr{P}'=\mathscr{P}$), unless $\kappa$ is perfect. Indeed, suppose $\kappa$ is not perfect and let $a\in \Og_K$ be an element whose image in $\kappa$ does not have a $p$-th root for $p=\mathrm{char}\, \kappa.$ Let $L:=K[X]/\langle X^p-a\rangle.$ Then the integral closure $\Og_L$ of $\Og_K$ in $L$ is a discrete valuation ring such that the induced extension $\kappa\subseteq \kappa'$ of residue fields is purely inseparable of degree equal to $[L:K].$ In particular, $\kappa'=\Og_L\otimes_{\Og_K}\kappa.$ If $\mathscr{A}\to \Spec \Og_L$ denotes a semiabelian scheme with proper generic fibre $A$, then $P:=\Res_{L/K}A$ is a non-proper pseudo-Abelian variety over $K$ with Néron model $\mathscr{P}$ such that $\mathscr{P}^0=\Res_{\Og_L/\Og_K}\mathscr{A}.$ The special fibre of this scheme has no unipotent subgroups over $\kappa$ but is not semiabelian. 

\section{Étale cohomology of pseudo-Abelian varieties}
We shall keep the notation from the last paragraph; in particular, $\Og_K$ is an excellent discrete valuation ring with field of fractions $K.$ In this section, we shall show that, for a pseudo-Abelian variety $P$ over the field $K,$ the $\Gal(K\sep/K)$-representations $T_\ell(P)$ and $H^1(P_{K\sep}, \Z_\ell)$ are canonically dual to each other (just as in the case of Abelian varieties). Hence the representation-theoretic criteria for good reduction and pseudo-semiabelian reduction we proved above can be stated in terms of $H^1(P_{K\sep}, \Z_\ell)$ instead of Tate modules. 

\begin{lemma}
Let $P$ be a pseudo-Abelian variety over the field $K.$ Let $L$ be a finite, purely inseparable extension of $K$ over which $P$ is an extension of an Abelian variety $A$ by a smooth connected unipotent algebraic group $U.$ Let $\ell$ be a prime number invertible in $\Og_K.$ Then the induced morphism $H^1(A_{K\alg}, \Z_\ell)\to H^1(P_{K\alg}, \Z_\ell)$ is an isomorphism. 
\end{lemma}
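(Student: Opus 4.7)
The plan is to reduce to a calculation over $L\alg$ and then exploit the Leray spectral sequence for the quotient $P_L\to A$, which is a torsor under the smooth connected unipotent group $U$.

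First I would pass to the algebraic closure. Since $L/K$ is purely inseparable, any algebraic closure of $K$ is an algebraic closure of $L$, and $P_{K\alg}$ is canonically identified with $(P_L)_{K\alg}=(P_L)_{L\alg}$ (here one can also invoke Proposition \ref{topolequivprop} applied to the universal homeomorphism $\Spec L\to \Spec K$ to compare \'etale cohomologies directly). Thus it suffices to show that the morphism $f\colon (P_L)_{L\alg}\to A_{L\alg}$, obtained by base-changing the quotient in the sequence $0\to U\to P_L\to A\to 0$, induces an isomorphism on $H^1(-,\Z_\ell)$.

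Next, for each $n\geq 1$ I would consider the Leray spectral sequence
$$E_2^{i,j}=H^i(A_{L\alg}, R^jf_\ast \Z/\ell^n)\Longrightarrow H^{i+j}((P_L)_{L\alg}, \Z/\ell^n).$$
The main claim is that $R^jf_\ast\Z/\ell^n=0$ for $j>0$ and $R^0f_\ast\Z/\ell^n=\Z/\ell^n$, which forces the spectral sequence to degenerate on the $i$-axis and identifies the pullback $f^\ast$ as an isomorphism $H^i(A_{L\alg},\Z/\ell^n)\cong H^i((P_L)_{L\alg},\Z/\ell^n)$ for every $i$; taking inverse limits in $n$ then yields the result in $\Z_\ell$-coefficients. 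To justify the claim, note that since $U$ is smooth, $f$ is smooth and admits \'etale-local sections, so the torsor is \'etale-locally trivial. Consequently, the stalk of $R^jf_\ast \Z/\ell^n$ at a geometric point of $A_{L\alg}$ computes $H^j$ of a trivial $U_{L\alg}$-torsor, i.e., of $U_{L\alg}$ itself; and $R^0f_\ast\Z/\ell^n=\Z/\ell^n$ follows from connectedness of the fibres.

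The heart of the argument is therefore the vanishing $H^j(U_{L\alg},\Z/\ell^n)=0$ for $j>0$. Over the algebraically closed (hence perfect) field $L\alg$, any smooth connected unipotent group admits a composition series whose successive quotients are isomorphic to $\mathbf{G}_{\mathrm{a}}$; iteratively applying Leray to these $\mathbf{A}^1$-bundle extensions reduces the claim to the standard vanishing $H^j(\mathbf{A}^1_{L\alg},\Z/\ell^n)=0$ for $j>0$, which holds since $\ell$ is invertible in $L\alg$ (this is precisely where the hypothesis $\ell\in \Og_K^\times$ is used in an essential way; the analogue fails for $\ell=p$ because of Artin--Schreier).

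The main obstacle is this last vanishing step, or rather the bookkeeping required to pass from the pointwise statement $H^j(U_{L\alg},\Z/\ell^n)=0$ to the sheaf-theoretic statement $R^jf_\ast\Z/\ell^n=0$; once this is in place, the Leray degeneration and the $\Z_\ell$-limit are formal. Note that the argument in fact proves the stronger statement that $f^\ast\colon H^i(A_{K\alg},\Z_\ell)\to H^i(P_{K\alg},\Z_\ell)$ is an isomorphism for all $i\geq 0$, which is what one would hope for geometrically since $P_L\to A$ is, \'etale-locally over $A$, an iterated affine-line bundle.
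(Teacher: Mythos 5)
Your argument is correct, but it takes a genuinely different route from the paper. The paper works only with $H^1$: it first shows, via Rosenlicht's theorem on units, that $\Gamma(P_{K\alg},\Og_{P_{K\alg}}^\times)=K^{\mathrm{alg},\times}$, then uses the Kummer sequence to identify $H^1(-,\Z/\ell^n\Z)$ with the $\ell^n$-torsion of the Picard group, and finally reduces everything to the statement that $\Pic A_{K\alg}\to \Pic P_{K\alg}$ is an isomorphism; this last point is obtained from the Zariski-local triviality of the $U$-torsor $P\to A$ (using the composition series of $U$ with successive quotients $\mathbf{G}_{\mathrm{a}}$, so that $\Pic U_{K\alg}=0$) together with the Fossum--Iversen result on Picard groups of fibre spaces. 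Your Leray argument instead rests on the smooth base change/acyclicity package (homotopy invariance of \'etale cohomology with coefficients invertible in the base field) and, as you note, proves the stronger statement that $f^\ast$ is an isomorphism on $H^i$ for all $i$, whereas the paper's Picard-theoretic argument is intrinsically limited to degree one but needs only more elementary inputs. One point you should tighten: for the non-proper map $f$ the stalk of $R^jf_\ast\Z/\ell^n$ at a geometric point is the cohomology of the preimage of the strictly henselian local scheme of $A_{L\alg}$ at that point, not of the fibre $U_{L\alg}$; the clean fix is to compute $R^jf_\ast$ \'etale- (or Zariski-) locally on $A_{L\alg}$, where the torsor becomes the projection $\mathbf{A}^d\times V\to V$ (since $U_{L\alg}$, being split unipotent over a perfect field, is an iterated $\mathbf{G}_{\mathrm{a}}$-torsor over affine bases and hence isomorphic to $\mathbf{A}^d$ as a scheme), and then invoke the standard acyclicity statement that $\Z/\ell^n\to Rp_\ast\Z/\ell^n$ is an isomorphism for such projections; this is the same theorem you are already using, so the change is one of bookkeeping rather than substance.
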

\begin{proof}
First observe that $\Gamma(P_{K\alg}, \Og_{P_{K\alg}}^\times)=K^{\mathrm{alg},\times}.$ Indeed, let $f\colon P_{K\alg}\to \Gm$ be an element of the first group. After translating $f$ by a $K\alg$-point of $\Gm,$ we may suppose that $f(e)=1$ where $e$ is the unit element of $P(K\alg).$ Hence we may assume that $f$ is actually a homomorphism of algebraic groups by a theorem of Rosenlicht (see \cite{Ros}, Theorem 3). In particular, the restriction of $f$ to $U_{K\alg}$ vanishes, so $f$ pulls back from a homomorphism $A_{K\alg}\to \Gm,$ which clearly vanishes as well. This shows that our original $f$ is constant. Now consider the exact sequence $0\to \boldsymbol{\mu}_{\ell^n}\to \Gm\to \Gm\to 0$ of étale sheaves on $P_{K\alg}.$ We obtain a commutative diagram with exact rows
$$\begin{CD}
0@>>> H^1(P_{K\alg}, \Z/\ell^n\Z)@>>> \Pic P_{K\alg}@>{\cdot \ell^n}>> \Pic P_{K\alg}\\
&&@AAA@AAA@AAA\\
0@>>> H^1(A_{K\alg}, \Z/\ell^n\Z)@>>> \Pic A_{K\alg}@>>{\cdot \ell^n}> \Pic A_{K\alg},
\end{CD}$$
which reduces our claim to showing that the morphism $\Pic A_{K\alg}\to \Pic P_{K\alg}$ is an isomorphism. The map $P_{K\alg}\to A_{K\alg}$ turns $P_{K\alg}$ into an $U_{K\alg}$-torsor over $A_{K\alg}.$ Since $U_{K\alg}$ has a composition series with successive quotients isomorphic to $\mathbf{G}_{\mathrm{a}}$ (see \cite{SGA3}, Exposé XVII, Proposition 4.1.1), we see that this torsor is trivial locally in the Zariski topology, and that $\Pic U_{K\alg}=0.$ Hence $\Pic A_{K\alg}\to \Pic P_{K\alg}$ is an isomorphism by \cite{FossI}, Proposition 3.1.
\end{proof}
\begin{proposition}
Let $P$ be a pseudo-Abelian variety over $K$ and let $\ell$ be a prime number invertible in $K.$ Then there is a $\Gal(K\sep/K)$-equivariant perfect pairing 
$$T_\ell(P)\times H^1(P_{K\sep}, \Z_\ell)\to \Z_\ell.$$
\end{proposition}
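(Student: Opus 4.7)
The plan is to reduce the construction of the pairing to the classical duality between the Tate module and the first cohomology of an abelian variety, via the finite purely inseparable descent supplied by Lemma \ref{Psplitlem}. First I would choose a finite purely inseparable extension $L/K$ together with an exact sequence
\[ 0 \to U \to P_L \to A \to 0 \]
as in Lemma \ref{Psplitlem}, where $U$ is smooth, connected and unipotent over $L$ and $A$ is an abelian variety over $L$. Since $L/K$ is purely inseparable, the restriction map $\Gal(L\sep/L) \to \Gal(K\sep/K)$ is a canonical isomorphism of profinite groups (as recalled in the discussion preceding Theorem \ref{NOScritthhm}), and $K\alg$ is simultaneously an algebraic closure of $L$.

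Next I would set up two canonical $\Gal(K\sep/K)$-equivariant isomorphisms. For the Tate module, the argument already used in the preceding proposition on $T_\ell(P)$ yields
\[ T_\ell(P) \;=\; T_\ell(P_L) \;\cong\; T_\ell(A), \]
where the first equality is a consequence of topological invariance of the small \'etale site (Proposition \ref{topolequivprop}, applied to the universal homeomorphism $\Spec L\sep \to \Spec K\sep$), and the second arises from the snake lemma applied to multiplication by $\ell^n$ on $0 \to U \to P_L \to A \to 0$ (using that $[\ell^n]$ is an isomorphism on the unipotent group $U$). For the cohomology, topological invariance applied to the universal homeomorphism $\Spec K\alg \to \Spec K\sep$ yields $H^1(P_{K\sep}, \Z_\ell) \cong H^1(P_{K\alg}, \Z_\ell)$, and the preceding lemma furnishes a canonical isomorphism $H^1(A_{K\alg}, \Z_\ell) \cong H^1(P_{K\alg}, \Z_\ell)$ induced by the projection $P_{K\alg} \to A_{K\alg}$. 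Both identifications are natural, hence $\Gal(K\sep/K)$-equivariant.

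To conclude, I would invoke the classical $\Gal(L\sep/L)$-equivariant perfect pairing
\[ T_\ell(A) \times H^1(A_{L\alg}, \Z_\ell) \to \Z_\ell \]
for the abelian variety $A$ over $L$, which expresses the standard fact that $H^1(A_{L\alg},\Z_\ell)$ is the $\Z_\ell$-linear dual of $T_\ell(A)$. Under the identification $\Gal(L\sep/L) \cong \Gal(K\sep/K)$, this pairing transports through the isomorphisms constructed above to a $\Gal(K\sep/K)$-equivariant perfect pairing on $T_\ell(P) \times H^1(P_{K\sep}, \Z_\ell)$. No substantial obstacle is expected: the work amounts to checking that each identification above is compatible with the relevant Galois action, which is immediate from the functoriality of the ingredients (topological invariance of the \'etale site, the snake lemma, and the pullback morphism $H^1(A_{K\alg}, \Z_\ell) \to H^1(P_{K\alg}, \Z_\ell)$ from the preceding lemma).
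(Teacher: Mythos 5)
Your proposal is correct and follows essentially the same route as the paper: pass to the purely inseparable splitting of $P$ (the paper does this directly over $K\alg$), use topological invariance of the \'etale site to identify the relevant Galois modules, invoke the isomorphisms $T_\ell(P)\cong T_\ell(A)$ and $H^1(A_{K\alg},\Z_\ell)\cong H^1(P_{K\alg},\Z_\ell)$ from the preceding lemma, and transport the classical perfect pairing for the abelian variety $A$. The only difference is organizational (you work over $L$ before passing to the algebraic closure), not mathematical.
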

\begin{proof}
Using topological invariance of the étale site (see Proposition \ref{topolequivprop}), we may replace $K\sep$ by $K\alg$ in the statement, and write $P$ as an extension $0\to U_{K\alg}\to P_{K\alg}\to A_{K\alg}\to 0$ as above. Then the maps $T_\ell(P)\to T_\ell(A)$ and $H^1(A_{K\alg}, \Z_\ell)\to H^1(P_{K\alg}, \Z_\ell)$ are isomorphisms which are clearly $\Gal(K\sep/K)$-equivariant. Hence the existence of our desired pairing follows because a Galois equivariant perfect pairing 
$$T_\ell(A)\times H^1(A_{K\alg}, \Z_\ell)\to \Z_\ell$$ is well-known to exist. 
\end{proof}
\begin{corollary}
In Theorems \ref{NOScritthhm} and \ref{unipotentthm}, we may replace $T_\ell(P)$ by $H^1(P_{K\sep}, \Z_\ell)$ without affecting the validity of those criteria. The same is true if we replace $T_\ell(P)$ by $T_\ell(P)\otimes_{\Z_\ell}\Q_\ell$ of $H^1(P_{K\sep}, \Q_\ell),$ since $T_\ell(P)$ and $H^1(P_{K\sep}, \Z_\ell)$ are torsion-free.
\end{corollary}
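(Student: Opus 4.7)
The plan is to reduce the corollary to a general observation: any continuous representation $V$ of a profinite group on a finitely generated free $\Z_\ell$-module is unramified (respectively, has inertia acting by unipotent operators, respectively, satisfies $(\sigma-1)^2=0$ for all $\sigma$ in inertia) if and only if its $\Z_\ell$-linear dual $V^\vee := \Hom_{\Z_\ell}(V, \Z_\ell)$ satisfies the same property. The preceding proposition supplies a Galois-equivariant identification of $H^1(P_{K\sep}, \Z_\ell)$ with $T_\ell(P)^\vee$, so once the general observation is in place, the criteria of Theorems \ref{NOScritthhm} and \ref{unipotentthm} transfer verbatim to $H^1(P_{K\sep}, \Z_\ell)$.

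First I would establish the general observation. For an automorphism $\sigma$ of a finite free $\Z_\ell$-module $V$, the dual action on $V^\vee$ is the transpose of $\sigma^{-1}$. Each of the three conditions of interest---triviality of $\sigma$, unipotence of $\sigma$, and $(\sigma-1)^2=0$---is preserved by both transposition and inversion: triviality is immediate; unipotence amounts to the characteristic polynomial being $(X-1)^n$, which is invariant under transposition and, by a direct computation on $\sigma = 1+N$ with $N$ nilpotent, under inversion; and if $(\sigma-1)^2=0$, then $\sigma = 1+N$ with $N^2=0$, so $\sigma^{-1} = 1-N$ satisfies the same identity, while the transpose obviously does as well. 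Applying this pointwise to the inertia subgroup $I_K$ yields the desired equivalence.

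Second, I would handle the $\Q_\ell$-variant by noting that both $T_\ell(P)$ and $H^1(P_{K\sep}, \Z_\ell)$ are free $\Z_\ell$-modules---the former by the proposition in the Tate module subsection, and the latter because the $\Z_\ell$-linear dual of a free $\Z_\ell$-module is free. Hence each embeds injectively into its rationalization as a Galois-stable lattice, and the polynomial identities defining our three conditions hold on the lattice if and only if they hold on its $\Q_\ell$-extension. I do not anticipate any serious obstacle: once the perfect pairing of the preceding proposition is in hand, what remains is elementary bookkeeping with dual representations.
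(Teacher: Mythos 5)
Your proposal is correct and matches the paper's (implicit) argument: the corollary is stated as an immediate consequence of the preceding perfect-pairing proposition, with the intended reasoning being exactly your observation that unramifiedness, unipotence of inertia, and the condition $(\sigma-1)^2=0$ are preserved under passing to the contragredient representation and, for torsion-free modules, under tensoring with $\Q_\ell$. Your explicit verification of these closure properties simply fills in details the paper leaves to the reader.
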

$\mathbf{Acknowledgement}.$ The author would like to express his gratitude to Dr. G. Gagliardi and Dr. D. Gvirtz for helpful discussions.

\end{document}